\documentclass{article}
\usepackage{amsmath}
\usepackage{amsthm}
\usepackage{amssymb}
\usepackage{graphicx}
\usepackage{graphics}
\usepackage{listings}
\usepackage{multirow}
\usepackage{framed}
\usepackage{hyperref}

\usepackage[algo2e,ruled,vlined,]{algorithm2e}
\usepackage{algorithmic}

\addtolength{\oddsidemargin}{-.55in}
	\addtolength{\evensidemargin}{-.55in}
	\addtolength{\textwidth}{1.1in}

\usepackage{algorithm}
\usepackage{algorithmic}

\usepackage{tikz}
\usepackage{pgf}
\usetikzlibrary{arrows}
\usepackage{mathtools}
\usepackage{mathrsfs}

\usepackage{color}
\definecolor{red}{rgb}{1,0,0}

\newcommand{\abs}[1]{\left\vert#1\right\vert}

\newcommand{\ms}{\medskip}
\newcommand{\bpf}{\begin{proof}}
\newcommand{\epf}{\end{proof}\ms}


\newtheorem{theorem}{Theorem}
\newtheorem{corollary}[theorem]{Corollary}
\newtheorem{lemma}[theorem]{Lemma}
\newtheorem{proposition}[theorem]{Proposition}

\newtheorem{claim}{Claim}

\theoremstyle{definition}
\newtheorem{definition}{Definition}
\newtheorem{question}{Question}
\newtheorem{conjecture}{Conjecture}

\begin{document}

\title{The zero forcing polynomial of a graph}

\author{Kirk Boyer\thanks{Department of Mathematics, University of Denver, Denver, CO, 80208, USA (kirk.boyer@du.edu)} \and 
Boris Brimkov\thanks{Department of Computational and Applied Mathematics, Rice University, Houston, TX, 77005, USA (boris.brimkov@rice.edu)} \and 
Sean English\thanks{Department of Mathematics, Western Michigan University, Kalamazoo, MI, 49008, USA (sean.j.english@wmich.edu)} \and 
Daniela Ferrero\thanks{Department of Mathematics, Texas State University, San Marcos, TX, 78666, USA (df20@txstate.edu)} \and 
Ariel Keller\thanks{Department of Mathematics and Computer Science, Emory University,  Atlanta, GA, 30322, USA (ariel.keller@emory.edu)} \and 
Rachel Kirsch\thanks{Department of Mathematics, University of Nebraska, Lincoln, NE, 68588, USA (rkirsch@huskers.unl.edu)} \and 
Michael Phillips\thanks{Department of Mathematical and Statistical Science, University of Colorado Denver, Denver, CO, 80204, USA (michael.2.phillips@ucdenver.edu)} \and  
Carolyn Reinhart\thanks{Department Mathematics, Iowa State University, Ames, IA, 50011, USA (reinh196@iastate.edu)}}

\date{}

\maketitle

\begin{abstract}
Zero forcing is an iterative graph coloring process, where given a set of initially colored vertices, a colored vertex with a single uncolored neighbor causes that neighbor to become colored. A zero forcing set is a set of initially colored vertices which causes the entire graph to eventually become colored. In this paper, we study the counting problem associated with zero forcing. We introduce the zero forcing polynomial of a graph $G$ of order $n$ as the polynomial $\mathcal{Z}(G;x)=\sum_{i=1}^n z(G;i) x^i$, where $z(G;i)$ is the number of zero forcing sets of $G$ of size $i$. We characterize the extremal coefficients of $\mathcal{Z}(G;x)$, derive closed form expressions for the zero forcing polynomials of several families of graphs, and explore various structural properties of $\mathcal{Z}(G;x)$, including multiplicativity, unimodality, and uniqueness. 

\smallskip

\noindent {\bf Keywords:} Zero forcing polynomial, zero forcing set, recognizability, threshold graph, unimodality
\end{abstract}

\section{Introduction}

Given a graph $G=(V,E)$ and a set $S \subseteq V$ of initially colored vertices, the \emph{zero forcing color change rule} dictates that at each timestep, a colored vertex $u$ with a single uncolored neighbor $v$ \emph{forces} that neighbor to become colored. 
The \emph{closure} of $S$ is the set of colored vertices obtained after the color change rule is applied until no new vertex can be forced. A \emph{zero forcing set} is a set whose closure is all of $V$; the \emph{zero forcing number} of $G$, denoted $Z(G)$, is the minimum cardinality of a zero forcing set. Zero forcing and similar processes have been independently studied in the contexts of linear algebra \cite{AIM-Workshop}, quantum physics \cite{quantum1}, theoretical computer science \cite{fast_mixed_search}, and power network monitoring \cite{powerdom3,powerdom2}. Zero forcing has applications in modeling various physical phenomena \cite{zf_physics,logic1,kforcing3,disease} and in bounding or approximating other graph parameters \cite{AIM-Workshop,zf_tw,zf_physics,butler}. See also \cite{Barioli,brimkov2,positive_zf2,signed_zf,fractional_zf} for variants of zero forcing, which are typically obtained by modifying the zero forcing color change rule or adding certain restrictions to the structure of a forcing set.

In this paper, we study the counting problem associated with zero forcing, i.e., characterizing and counting the distinct zero forcing sets of a graph. The set of minimum zero forcing sets of a graph has been alluded to previously in the context of \emph{propagation time} \cite{proptime_oriented,proptime1,proptime2}, where the objective is to find the largest or smallest number of timesteps it takes for the graph to be colored by a minimum zero forcing set. Similarly, the set of all zero forcing sets of a graph has been used in the context of \emph{throttling} \cite{throttling,throttling2}, where the objective is to minimize the sum of the size of a zero forcing set and the number of timesteps it takes for that zero forcing set to color the graph. In order to study the collection of zero forcing sets of a graph in a more general framework, we introduce the \emph{zero forcing polynomial} of a graph, which counts the number of distinct zero forcing sets of a given size. 
\begin{definition}
Let $G$ be a graph on $n$ vertices and $z(G;i)$ be the number of zero forcing sets of $G$ with cardinality $i$. The \emph{zero forcing polynomial} of $G$ is defined as
\begin{equation*}
\mathcal{Z}(G;x)=\sum_{i=1}^n z(G;i) x^i.
\end{equation*}
\end{definition}

In this paper, we study the basic algebraic and graph theoretic properties of the zero forcing polynomial, present structural and extremal results about its coefficients, and give closed form expressions for the zero forcing polynomials of several families of graphs\footnote{Some of these results, as well as analogous results about a connected variant of zero forcing, have been reported in \cite{brimkov_thesis}.}. As an application, we relate the zero forcing polynomial to special sets of vertices used in integer programming approaches for computing the zero forcing number. In general, graph polynomials contain important information about the structure and properties of graphs that can be extracted by algebraic methods. In particular, the values of graph polynomials at specific points, as well as their coefficients, roots, and derivatives, often have meaningful interpretations. Such information and other unexpected connections between graph theory and algebra are sometimes discovered long after a graph polynomial is originally introduced (see, e.g., \cite{brylawski,stanley}).

The study of graph polynomials was motivated by the Four Color Conjecture, when Birkhoff and Whitney \cite{birkhoff,whitney} introduced the \emph{chromatic polynomial} which counts the number of proper colorings of a graph $G$. Tutte generalized the chromatic polynomial into the two-variable \emph{Tutte polynomial} \cite{tutte2}, which also contains as special cases the \emph{flow polynomial}, \emph{reliability polynomial}, \emph{shelling polynomial}, and \emph{Jones polynomial}; see \cite{reliability_poly,tuttemain,jones} for more details. Graph polynomials which are not direct specializations of the Tutte polynomial have also been studied. For example, the \emph{domination polynomial} \cite{dom_poly1} of a graph $G$ counts the number of dominating sets of $G$. Work in this direction includes derivations of recurrence relations \cite{dom_poly3}, analysis of the roots  \cite{dom_poly2}, and characterizations for specific graphs \cite{dom_poly4,dom_poly5}. Similar results have been obtained for the \emph{connected domination polynomial} \cite{connected_dom_poly}, \emph{independence polynomial} \cite{indep_poly}, \emph{clique polynomial} \cite{clique_poly}, \emph{vertex cover polynomial} \cite{vertex_cover_poly}, and \emph{edge cover polynomial} \cite{edge_cover_poly}, which are defined as the generating functions of their eponymous sets. For more definitions, results, and applications of graph polynomials, see the survey of Ellis-Monaghan and Merino \cite{tuttemain2} and the bibliography therein. 

This paper is organized as follows. In the next section, we recall some graph theoretic notions and notation. In Section \ref{section_extremal}, we  characterize the extremal coefficients of the zero forcing polynomial. In Section \ref{section_characterizations}, we give closed form expressions for the zero forcing polynomials of several families of graphs. In Section \ref{section_structural}, we explore various structural properties of the zero forcing polynomials of general graphs. We conclude with some final remarks and open questions in Section \ref{section_conclusion}.

\section{Preliminaries}

A graph $G=(V,E)$ consists of a vertex set $V$ and an edge set $E$ of two-element subsets of $V$. The \emph{order} of $G$ is denoted by $n=|V|$. Two vertices $v,w\in V$ are \emph{adjacent}, or \emph{neighbors}, if $\{v,w\}\in E$; we will write $v\sim w$ if $v$ and $w$ are adjacent. The \emph{neighborhood} of $v\in V$ is the set of all vertices which are adjacent to $v$, denoted $N(v;G)$; the \emph{degree} of $v\in V$ is defined as $d(v;G)=|N(v;G)|$. The \emph{closed neighborhood} of $v$ is the set $N[v;G]=N(v;G)\cup \{v\}$. The minimum degree and maximum degree of $G$ are denoted by $\delta(G)$ and $\Delta(G)$, respectively. The dependence of these parameters on $G$ can be omitted when it is clear from the context. Given $S \subseteq V$, the \emph{induced subgraph} $G[S]$ is the subgraph of $G$ whose vertex set is $S$ and whose edge set consists of all edges of $G$ which have both endpoints in $S$. An isomorphism between graphs $G_1$ and $G_2$ will be denoted by $G_1\simeq G_2$. A \emph{leaf}, or \emph{pendent}, is a vertex with degree 1; an \emph{isolated vertex} is a vertex of degree 0. A \emph{dominating vertex} is a vertex which is adjacent to all other vertices. A \emph{cut vertex} is a vertex which, when removed, increases the number of connected components in $G$. A \emph{biconnected component} of $G$ is a maximal subgraph of $G$ which has no cut vertices. The path, cycle, complete graph, and empty graph on $n$ vertices will respectively be denoted $P_n$, $C_n$, $K_n$, $\overline{K}_n$. The complete multipartite graph whose parts have sizes $a_1,\ldots,a_k$ will be denoted $K_{a_1,\ldots,a_k}$. 

\sloppypar Given two graphs $G_1$ and $G_2$, the \emph{disjoint union} $G_1\dot\cup G_2$ is the graph with vertex set $V(G_1)\dot\cup V(G_2)$ and edge set $E(G_1)\dot\cup E(G_2)$. The \emph{join} of $G_1$ and $G_2$, denoted $G_1\lor G_2$, is the graph obtained from $G_1\dot\cup G_2$ by adding an edge between each vertex of $G_1$ and each vertex of $G_2$. The \emph{Cartesian product} of $G_1$ and $G_2$, denoted $G_1\square G_2$, is the graph with vertex set $V(G_1)\times V(G_2)$, where vertices $(u,u')$ and $(v,v')$ are adjacent in $G_1\square G_2$ if and only if either $u = v$ and $u'$ is adjacent to $v'$ in $G_2$, or $u' = v'$ and $u$ is adjacent to $v$ in $G_1$. For other graph theoretic terminology and definitions, we refer the reader to \cite{bondy}.

A \emph{chronological list of forces} of a set $S\subseteq V(G)$ is a sequence of forcing steps applied to obtain the closure of $S$ in the order they are applied; note that there can be initially colored vertices which do not force any vertex. A \emph{forcing chain} for a chronological list of forces is a maximal sequence of vertices $(v_1,\ldots,v_k)$ such that $v_i$ forces $v_{i+1}$ for $1\leq i\leq k-1$. Each forcing chain corresponds to a distinct path in $G$, one of whose endpoints is an initially colored vertex and the rest of which is uncolored at the initial timestep; we will say the initially colored vertex \emph{initiates} the forcing chain, and we will call the other endpoint of the forcing chain a \emph{terminal} vertex. 

Given a set $X$, we denote by ${X\choose k}$ the set of all $k$-element subsets of $X$; we denote by $2^X$ the power set of $X$. Given integers $a$ and $b$ with $0\leq a<b$, we adopt the convention that ${a\choose b}=0$. For any positive integer $n$, $[n]$ denotes the set $\{1,\ldots,n\}$.

\section{Extremal coefficients of $\mathcal{Z}(G;x)$}
\label{section_extremal}
In this section, we characterize some of the extremal coefficients of $\mathcal{Z}(G;x)$. 

\begin{theorem}
\label{thm_zf_polynomial_properties}
Let $G=(V,E)$ be a graph. Then,
\begin{enumerate}
\item $z(G;n)=1$,
\item $z(G;n-1)=|\{v\in V:d(v)\neq 0\}|$,
\item $z(G;n-2)=|\{\{u,v\}\subset V:u\neq v,d(u)\neq 0, d(v)\neq 0, N(u)\backslash\{v\}\neq N(v)\backslash\{u\}\}|$,
\item $z(G;1)=\begin{cases}
2 &\text{ if } G\simeq P_n, n\geq 2,\\
1 &\text{ if } G\simeq P_1,\\
0 &\text{ otherwise}.
\end{cases}$
\end{enumerate}
\end{theorem}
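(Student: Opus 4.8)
The plan is to prove each of the four statements essentially independently, since they concern different extremal coefficients. For statement (1), observe that the full vertex set $V$ is trivially a zero forcing set, and it is the only subset of size $n$, so $z(G;n)=1$.

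For statement (2), I would characterize which sets $V\setminus\{v\}$ of size $n-1$ are zero forcing. If $d(v)\neq 0$, pick any neighbor $u$ of $v$; then $u$ is colored, all of its neighbors except $v$ are colored (everything except $v$ is colored), so $u$ forces $v$, and $V\setminus\{v\}$ is a zero forcing set. Conversely, if $d(v)=0$, then $v$ has no neighbor that could ever force it, so $V\setminus\{v\}$ is not a zero forcing set. Hence $z(G;n-1)$ counts exactly the non-isolated vertices.

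For statement (3), I would similarly analyze when $V\setminus\{u,v\}$ is a zero forcing set. If either $u$ or $v$ is isolated it clearly fails, so assume $d(u),d(v)\neq 0$. The key observation is that the only uncolored vertices are $u$ and $v$, so a force can only happen if some colored vertex $w$ has exactly one of $u,v$ as an uncolored neighbor, or if one of $u,v$ forces the other once the other is still the unique uncolored neighbor. First, if there is a colored vertex $w$ adjacent to exactly one of $u,v$ (say adjacent to $u$ but not $v$), then $w$ forces $u$, and afterward $v$ has a colored neighbor (since $d(v)\neq 0$) which now has $v$ as its only uncolored neighbor, so $v$ gets forced too. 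Also, a vertex in $\{u,v\}$ could force the other: $u$ forces $v$ iff $v$ is the only uncolored neighbor of $u$, i.e. $u\sim v$ and $u$ has no other uncolored neighbor — but $u$ and $v$ are the only uncolored vertices, so this just requires $u\sim v$; but wait, for $u$ to be able to force we need $u$ colored, which it isn't initially. So the first force must come from outside $\{u,v\}$, i.e. from some $w\notin\{u,v\}$ adjacent to exactly one of $u,v$. Such a $w$ exists if and only if $N(u)\setminus\{v\}\neq N(v)\setminus\{u\}$: the symmetric difference of these two sets is nonempty exactly when some vertex outside $\{u,v\}$ is adjacent to one but not the other. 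Conversely, if $N(u)\setminus\{v\}=N(v)\setminus\{u\}$, then every colored vertex adjacent to $u$ is also adjacent to $v$ and vice versa, so no colored vertex can force, and neither $u$ nor $v$ can force since both are uncolored; the process is stuck. I would write this up carefully, treating the edge case where $N(u)\setminus\{v\}=N(v)\setminus\{u\}=\emptyset$ (which forces $d(u),d(v)\leq 1$) consistently.

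For statement (4), I need to determine when a single vertex forms a zero forcing set, i.e. when $Z(G)=1$ and count the singletons achieving it. It is standard (and follows from the forcing-chain structure described in the preliminaries) that $Z(G)=1$ exactly when $G$ is a path $P_n$: a single colored vertex generates a single forcing chain, which is a path covering all of $V$, forcing $G$ to be a path, and the initiating vertex must be a leaf (for $n\geq 2$). Conversely, in $P_n$ each of the two leaves is a zero forcing set, giving $z(P_n;1)=2$ for $n\geq 2$; for $P_1$ the single vertex is itself the whole graph, so $z(P_1;1)=1$; and otherwise no singleton works, so $z(G;1)=0$. I would include a brief argument that a graph with a zero forcing set of size $1$ must be a path: the unique forcing chain is a path in $G$, and it must visit every vertex; any extra edge would create a vertex with two uncolored neighbors at some timestep, blocking the process, or a second leaf that never gets reached.

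I expect statement (3) to be the main obstacle, since it requires carefully ruling out all possible forcing scenarios on a configuration with exactly two uncolored vertices and correctly identifying the condition $N(u)\setminus\{v\}\neq N(v)\setminus\{u\}$ as exactly the obstruction, including the degenerate subcases; the other three are short.
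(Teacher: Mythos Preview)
Your proposal is correct and follows essentially the same approach as the paper: each item is handled by the same direct case analysis, with the key observation for item~3 being that a first force from $V\setminus\{u,v\}$ exists precisely when $N(u)\setminus\{v\}\neq N(v)\setminus\{u\}$, and item~4 reducing to the well-known fact that $Z(G)=1$ if and only if $G\simeq P_n$. Your write-up is actually a bit more careful than the paper's (e.g., explicitly noting that neither $u$ nor $v$ can perform the first force since both are uncolored, and sketching why a single forcing chain forces $G$ to be a path), but there is no substantive difference in method.
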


\proof
The numbers of the proofs below correspond to the numbers in the statement of the theorem. 
\begin{enumerate}
\item[1.] $V$ is the only zero forcing set of size $n$. 
\item[2.] Any non-isolated vertex $v$ has a neighbor which can force $v$. Thus, each set which excludes one non-isolated vertex of $G$ is a zero forcing set of size $n-1$; conversely, no set which excludes an isolated vertex is a zero forcing set. 
\item[3.] Let $u,v$ be two non-isolated vertices of $G$. If $N(u)\backslash\{v\}\neq N(v)\backslash\{u\}$, then there is a vertex $w$ adjacent to one of $u$ and $v$, but not the other. Suppose $u\sim w$; then, $V\backslash\{u,v\}$ is a zero forcing set since $w$ can force $u$ and any neighbor of $v$ can force $v$. On the other hand, a pair of vertices $u,v$ which does not satisfy these conditions cannot be excluded from a zero forcing set, since every vertex which is adjacent to one will be adjacent to the other, and hence no vertex will be able to force $u$ or $v$. 
\item[4.] The only graph with zero forcing number 1 is $P_n$. Thus, if $G\not\simeq P_n$, $z(G;1)=0$. If $G\simeq P_n$ and $n\geq 2$, either end vertex of the path is a zero forcing set. If $n=1$, there is a single zero forcing set.
\qed
\end{enumerate}

We next consider the coefficient $z(G;Z(G))$, i.e., the number of minimum zero forcing sets of a graph $G$. Reversing the forcing chains associated with a zero forcing set produces another zero forcing set; thus, a (non-empty) graph cannot have a unique minimum zero forcing set. Moreover, a trivial upper bound on the coefficient $z(G;Z(G))$ is $z(G;Z(G))\leq {n\choose Z(G)}$. We now classify the families of graphs for which this bound holds with equality.

\begin{theorem}
If $G$ is a graph on $n$ vertices with $z(G;Z(G))=\binom{n}{Z(G)}$, then $G\simeq K_n$ or $G\simeq \overline{K_n}$.
	\end{theorem}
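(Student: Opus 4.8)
The plan is to assume that every $Z(G)$-element subset of $V$ is a zero forcing set and deduce strong structural constraints on $G$, eventually forcing $G$ to be edgeless or complete. First I would dispose of the trivial extremes: if $Z(G)=n$ then $G\simeq \overline{K_n}$ (since $Z(G)=n$ exactly when $G$ has no edges, and then the bound holds vacuously), and if $Z(G)=n-1$ then by part 2 of Theorem~\ref{thm_zf_polynomial_properties} we need $z(G;n-1)=\binom{n}{n-1}=n$, i.e.\ every vertex is non-isolated, and a short argument shows the only such graphs with $Z(G)=n-1$ are complete multipartite graphs whose parts all have size at most\ldots{}—actually it is cleaner to handle small $n$ by hand and then assume $2\le Z(G)\le n-2$.

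The main work is the case $2 \le Z(G) \le n-2$. Here I would exploit part 3 of Theorem~\ref{thm_zf_polynomial_properties} together with a ``padding'' observation: if $S$ is a zero forcing set and $S\subseteq S'$, then $S'$ is a zero forcing set. Consequently, if \emph{every} $Z(G)$-subset is a zero forcing set, then every $k$-subset with $k\ge Z(G)$ is a zero forcing set; in particular $z(G;n-2)=\binom{n}{2}$ (using $n-2\ge Z(G)$). By part 3, this means that for \emph{every} pair of distinct vertices $u,v$, both are non-isolated and $N(u)\setminus\{v\}\neq N(v)\setminus\{u\}$. The condition that no two vertices have equal punctured neighborhoods, applied to all pairs, is quite restrictive: it says $G$ has no two ``twins'' (neither true twins nor false twins). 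I would then argue that a graph on $n\ge 4$ vertices which is twin-free but in which \emph{every} $Z(G)$-subset forces cannot exist unless $Z(G)\in\{n-1,n\}$, contradicting $Z(G)\le n-2$.

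To make that contradiction concrete, I would pick a minimum zero forcing set $B$ and look at a vertex $v\notin B$; in the forcing process from $B$, the vertex that forces $v$ is uniquely determined at the moment $v$ is colored, which ties the structure of $G$ to that of $B$. The cleanest route is probably: since every $Z(G)$-set forces, take $B$ to be a minimum zero forcing set and $w\in B$ a vertex that actually performs a force; then consider $B' = (B\setminus\{w\})\cup\{v\}$ for a suitable $v$ that was forced, show $B'$ must also force, and iterate to show that $G$ is so homogeneous that any two vertices are twins unless $G$ is complete—then twin-freeness forces $G\simeq K_n$, and for $K_n$ indeed $Z(K_n)=n-1$ so we are back in the already-settled regime, meaning the regime $2\le Z(G)\le n-2$ is in fact empty under the hypothesis. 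I expect the main obstacle to be organizing this homogeneity argument so that it genuinely yields ``all pairs are twins or $G=K_n$'' rather than just local information; carefully using the uniqueness of the forcer at each timestep, and the fact that we may reroute forcing chains through any padded superset, should be enough, but the bookkeeping of which vertex forces which—especially ensuring the swapped set $B'$ still admits a valid chronological list of forces—is the delicate point.
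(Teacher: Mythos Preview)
Your reduction via padding to $z(G;n-2)=\binom{n}{2}$ and hence to twin-freeness (from part~3 of Theorem~\ref{thm_zf_polynomial_properties}) is correct and sets up a clean target: produce a pair of twins under the hypothesis and you are done. But the step where you actually manufacture that contradiction is missing. The swap $B'=(B\setminus\{w\})\cup\{v\}$ carries no information: $|B'|=Z(G)$, so $B'$ forces \emph{by hypothesis}, and you have extracted nothing about $G$ beyond what you already assumed. Iterating such swaps only restates that every $Z(G)$-set forces. You acknowledge this is the ``main obstacle'' and the ``delicate point'', but you have proposed no mechanism by which swapping would compel two vertices to share a punctured neighborhood; as written, the heart of the argument is a hope rather than a proof.

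The paper supplies the missing idea, and it is not a swapping argument: first show $\delta(G)=Z(G)$. If some $v$ had $d(v)<Z(G)$, pick a neighbor $w$ of $v$ and pad $N[v]\setminus\{w\}$ with non-neighbors of $v$ to a set $S$ of size $Z(G)-1$; then $S$ is too small to force, yet $v$ can force $w$ immediately, after which $Z(G)$ vertices are colored and hence (by hypothesis) everything is---a contradiction. With $\delta(G)=Z(G)$ in hand, take any $Z(G)$-subset $S\subseteq N(v)$; it forces, and a first forcer $u\in S$ has all but one neighbor already in $S$, so $d(u)\le Z(G)$ and hence $d(u)=Z(G)$. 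Then $N(u)$ is itself a forcing $Z(G)$-set, and a first forcer $w\in N(u)$ has $u$ as its unique uncolored neighbor, giving $N(w)\setminus\{u\}\subseteq N(u)\setminus\{w\}$, with equality by counting since $d(w)=Z(G)$ as well. This pair of twins is exactly the contradiction your twin-free framework was looking for.
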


\begin{proof}
If $z(G;Z(G))=\binom{n}{Z(G)}$, then every set of vertices of size $Z(G)$ is a zero forcing set. Clearly, when $G\simeq K_n$ or $G\simeq \overline{K_n}$, this property holds. We will assume that $G\not\simeq \overline{K_n}$ and show that $G\simeq K_n$. If $Z(G)=1$, it is easy to see that $G\simeq K_2$; thus, assume henceforth that $Z(G)\geq 2$. Since $G\not\simeq \overline{K_n}$, it follows that $Z(G)<n$. Then, if $G$ contains an isolated vertex $v$ (and $G\not\simeq K_1$), a set of size $Z(G)$ which omits $v$ is not forcing, so $G$ could not have the desired property. 
Thus, $G$ does not have isolated vertices. 

We next claim that $\delta(G)=Z(G)$. Indeed, for any graph $G$, $\delta(G)\leq Z(G)$. Suppose there exists a vertex $v$ with $d(v)<Z(G)$, and let $w$ be a neighbor of $v$ (which exists since $v$ is not an isolated vertex). Then, let $S$ be a set consisting of $N[v]\setminus\{w\}$ together with $Z(G)-d(v)-1$ vertices not adjacent to $v$. Note that this set contains $Z(G)-1$ elements so it is not a zero forcing set, but this set forces $w$. Once this initial force is performed, we have a set with $Z(G)$ colored vertices, which must be a forcing set by our assumptions -- a contradiction. Thus $\delta(G)= Z(G)$.

Let $v$ be any vertex, and let $S\subseteq N(v)$ be a set of $Z(G)$ vertices. Then, $S$ is a zero forcing set. Let $u\in S$ be a vertex which can perform the first force; then, $d(u)=\delta(G)=Z(G)$. Since $N(u)$ is a set of size $Z(G)$, it is also a zero forcing set. Let $w\in N(u)$ be a vertex which can perform the first force; then, $d(w)=Z(G)$ and $N(w)=N(u)\backslash\{w\}\cup\{u\}$. Note that $N(u)\backslash \{w\}=N(w)\backslash\{u\}$, so any set which excludes at least one of $u$ and $w$ cannot be a zero forcing set. This implies that $Z(G)=n-1$, since if $Z(G)\leq n-2$, a set of size $Z(G)$ which excludes both $u$ and $w$ would not be zero forcing. Since $K_n$ is the only graph without isolates which has zero forcing number $n-1$, it follows that $G\simeq K_n$.
\end{proof}

\section{Characterizations of $\mathcal{Z}(G;x)$ for specific graphs}
\label{section_characterizations}

\noindent In this section, we give closed form expressions for the zero forcing polynomials of certain families of graphs.

\begin{proposition}
\label{prop_clique}
For $n\geq 2$, $\mathcal{Z}(K_n;x)=x^n+n x^{n-1}$.
\end{proposition}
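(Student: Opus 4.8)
The plan is to read off the two nonzero coefficients from the general results of Section~\ref{section_extremal} and to argue that every other coefficient vanishes because $K_n$ admits essentially no forcing until only one vertex remains uncolored. Concretely, I want to show $z(K_n;i)=0$ for $1\le i\le n-2$, $z(K_n;n-1)=n$, and $z(K_n;n)=1$, which together give $\mathcal{Z}(K_n;x)=x^n+nx^{n-1}$.

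First I would handle the top two coefficients. By part~1 of Theorem~\ref{thm_zf_polynomial_properties}, $z(K_n;n)=1$. By part~2 of the same theorem, $z(K_n;n-1)$ equals the number of non-isolated vertices of $K_n$; since $n\ge 2$, every vertex of $K_n$ has degree $n-1\ge 1$, so this count is $n$. This accounts for the terms $x^n$ and $nx^{n-1}$.

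Next I would show there are no zero forcing sets of size at most $n-2$. Suppose $S\subseteq V(K_n)$ with $|S|\le n-2$, so at least two vertices are uncolored. In $K_n$ every colored vertex is adjacent to all other vertices, hence adjacent to at least two uncolored vertices; therefore no vertex can perform a force, the closure of $S$ is $S$ itself, and $S$ is not a zero forcing set. Thus $z(K_n;i)=0$ for all $i\le n-2$, and summing the surviving terms completes the proof.

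There is no real obstacle here; the only thing to be careful about is the hypothesis $n\ge 2$, which is exactly what guarantees every vertex is non-isolated so that the coefficient of $x^{n-1}$ is $n$ rather than something smaller (for $n=1$ one would instead get $\mathcal{Z}(K_1;x)=x$).
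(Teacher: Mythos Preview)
Your proof is correct and follows essentially the same approach as the paper: the paper invokes $Z(K_n)=n-1$ to kill the lower coefficients and then cites Theorem~\ref{thm_zf_polynomial_properties} for $z(K_n;n-1)=n$ and $z(K_n;n)=1$. The only difference is that you supply the one-line argument for $Z(K_n)=n-1$ directly rather than citing it as known.
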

\begin{proof}
$Z(K_n)=n-1$, so $z(K_n;i)=0$ for $i<n-1$. By Theorem \ref{thm_zf_polynomial_properties}, $z(K_n;n-1)=n$ and $z(K_n;n)=1$, so $\mathcal{Z}(K_n;x)=x^n+n x^{n-1}$.
\end{proof}

\begin{proposition}
\label{prop_multipartite}
If $a_1,\ldots,a_k\geq 2$, $\mathcal{Z}(K_{a_1,\ldots,a_k};x)=(\sum_{1\leq i<j\leq k}a_ia_j)x^{n-2}+n x^{n-1}+x^n$.
\end{proposition}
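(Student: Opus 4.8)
The plan is to determine $\mathcal{Z}(K_{a_1,\ldots,a_k};x)$ by locating its lowest nonzero coefficient and then invoking Theorem~\ref{thm_zf_polynomial_properties} for the top coefficients. Write $n = \sum_{i=1}^k a_i$. The first step is to show $Z(K_{a_1,\ldots,a_k}) = n-2$ under the hypothesis that all $a_i \ge 2$. The lower bound $Z \ge n-2$ follows because a complete multipartite graph with every part of size at least $2$ has the property that any two vertices $u,v$ in the same part satisfy $N(u) = N(v)$, so no proper coloring of fewer than $n-2$ vertices can ever force: indeed if a set $S$ with $|S| \le n-3$ is colored, then some part contains at least two uncolored vertices (otherwise $|V \setminus S| \le k \le n/2 \le n-3$ would require care, so instead argue: since parts have size $\ge 2$, the $n-|S| \ge 3$ uncolored vertices cannot each lie in distinct parts with the remaining part-mates colored — one checks directly that a colored vertex always has at least two uncolored neighbors). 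Cleaner: since every vertex $w$ has at least two non-neighbors among the uncolored vertices is false; the right statement is that whenever two vertices in a common part are both uncolored, no colored vertex can force, because any colored vertex adjacent to one is adjacent to the other. So I would show that any set of size $\le n-3$ leaves two vertices of some part uncolored — this is exactly the pigeonhole fact that $n-2$ uncolored vertices spread over $k$ parts of size $\ge 2$ each must double up in some part unless $k \ge n-2$, i.e. unless at most $2$ parts have size exactly $2$ and... — in fact it is cleanest to invoke part~3 of Theorem~\ref{thm_zf_polynomial_properties} directly.

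Concretely, the efficient route: by Theorem~\ref{thm_zf_polynomial_properties}, the graph is non-empty so $z(G;n)=1$ and $z(G;n-1)=n$ (every vertex has positive degree since all parts are nonempty and $k \ge 2$). For $z(G;n-2)$, I count pairs $\{u,v\}$ of distinct non-isolated vertices with $N(u)\setminus\{v\} \ne N(v)\setminus\{u\}$. If $u,v$ lie in the same part, then $N(u) = N(v)$ (neither is adjacent to the other, and they have identical neighborhoods), so $N(u)\setminus\{v\} = N(u) = N(v) = N(v)\setminus\{u\}$ and the pair is \emph{excluded}. If $u,v$ lie in different parts, then $u \sim v$; pick any $w$ in $u$'s part other than $u$ (possible since $a_i \ge 2$), and $w$ is a neighbor of $v$ but not of $u$, so $N(u)\setminus\{v\} \ne N(v)\setminus\{u\}$ and the pair is \emph{counted}. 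Hence $z(G;n-2)$ equals the number of pairs of vertices in distinct parts, which is $\sum_{1\le i<j\le k} a_i a_j$.

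Finally I must confirm that $z(G;i) = 0$ for all $i < n-2$, equivalently that $Z(K_{a_1,\ldots,a_k}) = n-2$. The fact that $z(G;n-2) = \sum_{i<j} a_i a_j > 0$ already shows $Z(G) \le n-2$. For $Z(G) \ge n-2$: take any set $S$ with $|S| = n-3$; since the $3$ uncolored vertices lie in at most $3$ parts and each part has $\ge 2$ vertices, either two of the uncolored vertices share a part, or they lie in three distinct parts — but in the latter case at least one of those parts has a second vertex which is colored and hence adjacent to two uncolored vertices... this still doesn't immediately give a stuck configuration, so I instead argue: two uncolored vertices $u,v$ in the same part give a stuck configuration as noted. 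If the $3$ uncolored vertices are in $3$ distinct parts, I extend: actually this case cannot force either, because the configuration $V \setminus \{x,y,z\}$ with $x,y,z$ in distinct parts — every colored vertex $w$ in one of those three parts is nonadjacent to the uncolored vertex in its own part but we need it to have $\ge 2$ uncolored neighbors; $w$ is adjacent to the $2$ uncolored vertices outside its part, done. And a colored vertex in a fourth part is adjacent to all $3$ uncolored vertices. So in every case every colored vertex has $\ge 2$ uncolored neighbors, the configuration is stuck, and no set of size $n-3$ (hence none smaller) is forcing. Therefore $z(G;i)=0$ for $i \le n-3$, and combining with the three computed coefficients gives $\mathcal{Z}(K_{a_1,\ldots,a_k};x) = \left(\sum_{1\le i<j\le k} a_i a_j\right) x^{n-2} + n x^{n-1} + x^n$. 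The main obstacle is the clean verification that no $(n-3)$-set forces; the case analysis on how the three uncolored vertices distribute among parts is the only place requiring care, and the key observation making it work is that every part has size $\ge 2$, so a colored vertex always retains at least two uncolored neighbors.
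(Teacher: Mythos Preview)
Your approach matches the paper's: invoke Theorem~\ref{thm_zf_polynomial_properties} for the coefficients at $x^n$ and $x^{n-1}$, establish $Z(K_{a_1,\ldots,a_k})=n-2$, and count the minimum zero forcing sets. The paper simply asserts $Z=n-2$ and counts the size-$(n-2)$ sets directly; your route through part~3 of Theorem~\ref{thm_zf_polynomial_properties} for $z(G;n-2)$ is equivalent, and you additionally supply an argument for $Z\ge n-2$ that the paper omits.

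That added argument has one imprecision worth fixing. In the case where two of the three uncolored vertices $u,v$ lie in a common part and the third, $w$, lies elsewhere, it is \emph{not} true that ``no colored vertex can force'': any colored vertex in the part containing $u$ and $v$ has $w$ as its sole uncolored neighbor and can force $w$. What is true, and sufficient, is that no vertex can ever force $u$ or $v$, since $N(u)=N(v)$ and hence any would-be forcer of one is also adjacent to the other; so even after $w$ is colored the process stalls with $u$ and $v$ uncolored. With this correction your case analysis for $Z\ge n-2$ goes through, and the rest of the proof (the three-distinct-parts case and the computation of $z(G;n-2)$) is fine.
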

\proof
$Z(K_{a_1,\ldots,a_k})=n-2$, so $z(K_{a_1,\ldots,a_k};i)=0$ for $i<n-2$. Each minimum zero forcing set of $K_{a_1,\ldots,a_k}$ excludes a vertex from two of the parts of $K_{a_1,\ldots,a_k}$; there are $\sum_{1\leq i<j\leq k}a_ia_j$ ways to pick such a pair of vertices, so $z(K_{a_1,\ldots,a_k};n-2)=\sum_{1\leq i<j\leq k}a_ia_j$. By Theorem \ref{thm_zf_polynomial_properties}, $z(K_{a_1,\ldots,a_k};n-1)=n$ and $z(K_{a_1,\ldots,a_k};n)=1$, so $\mathcal{Z}(K_{a_1,\ldots,a_k};x)=x^n+n x^{n-1}+(\sum_{1\leq i<j\leq k}a_ia_j)x^{n-2}$.
\qed

\begin{proposition}
\label{prop_path}
For $n\geq 1$, $\mathcal{Z}(P_n;x)=\sum_{i=1}^n({n\choose i}-{n-i-1\choose i})x^i$.

\end{proposition}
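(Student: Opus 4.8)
The plan is to first prove a clean combinatorial characterization of the zero forcing sets of $P_n$, and then reduce the counting problem to a standard non-consecutive-subset count.

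Label the vertices of $P_n$ by $1,\ldots,n$ in path order. The key claim is: a set $S\subseteq V(P_n)$ is a zero forcing set if and only if $S$ contains an endpoint (vertex $1$ or vertex $n$) or $S$ contains two consecutive vertices $i,i+1$; equivalently, $S$ \emph{fails} to be a zero forcing set precisely when $S\subseteq\{2,\ldots,n-1\}$ and $S$ contains no two consecutive vertices. For the ``if'' direction I would use monotonicity of the closure operator (if $S\subseteq S'$ then the closure of $S$ is contained in the closure of $S'$): since extra initially colored vertices never obstruct forcing, it suffices to check that the closure of $\{1\}$, of $\{n\}$, and of $\{i,i+1\}$ each equals $V$, and each of these follows by the obvious cascade along the path (starting from $\{i,i+1\}$, vertex $i$ forces $i-1$, then $i-1$ forces $i-2$, and so on down to vertex $1$; symmetrically vertex $i+1$ forces rightward to vertex $n$). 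For the ``only if'' direction: if $S\subseteq\{2,\ldots,n-1\}$ has no two consecutive elements, then each $v\in S$ has both of its neighbors $v-1$ and $v+1$ uncolored, so no vertex can perform a force; hence the closure of $S$ is $S$ itself, a proper subset of $V$.

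Given the characterization, $z(P_n;i)=\binom{n}{i}-f(n,i)$, where $f(n,i)$ is the number of $i$-element subsets of $\{2,\ldots,n-1\}$ containing no two consecutive elements. I would then invoke (and briefly justify via the shift bijection $\{x_1<\cdots<x_i\}\mapsto\{x_1,x_2-1,\ldots,x_i-(i-1)\}$) the standard fact that the number of $i$-element subsets of $[m]$ with no two consecutive elements is $\binom{m-i+1}{i}$. Applying this with $m=n-2$ gives $f(n,i)=\binom{(n-2)-i+1}{i}=\binom{n-i-1}{i}$, and summing over $i$ from $1$ to $n$ yields $\mathcal{Z}(P_n;x)=\sum_{i=1}^n\left(\binom{n}{i}-\binom{n-i-1}{i}\right)x^i$. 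The small cases $n\in\{1,2\}$, where $\{2,\ldots,n-1\}=\emptyset$, are absorbed by the convention $\binom{a}{b}=0$ for $a<b$.

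I expect the main (though still modest) obstacle to be pinning down and cleanly proving the zero forcing characterization — in particular making the ``if'' direction airtight via the monotonicity reduction, so that one need not track how a block of two consecutive colored vertices interacts with the other, possibly scattered, initially colored vertices. Once the characterization is established, the rest is the routine non-consecutive-subset enumeration.
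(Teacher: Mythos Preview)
Your proposal is correct and follows essentially the same approach as the paper: both identify the non-forcing sets of $P_n$ as exactly the subsets avoiding both endpoints and containing no two consecutive vertices, then count these by complement. The only minor difference is in the counting step---the paper places the $n-i$ uncolored vertices into the $i+1$ gaps around the colored ones (stars-and-bars), whereas you count $i$-subsets of $\{2,\ldots,n-1\}$ with no two consecutive via the shift bijection; both yield $\binom{n-i-1}{i}$ and are standard equivalent arguments.
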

\proof
The sets of $P_n$ of size $i$ which are not forcing are those which do not contain an end-vertex of the path and do not contain adjacent vertices. To count the number of non-forcing sets of size $i$, we can use the following argument: there are $n-i$ indistinguishable uncolored vertices to be placed in the $i+1$ positions around the colored vertices, where each position must receive at least one uncolored vertex (in order for there not to be any adjacent colored vertices and for the end-vertices to be uncolored). There are ${(n-i)-1\choose (i+1)-1}$ ways to choose the positions of the uncolored vertices. Thus, there are ${n\choose i}-{n-i-1\choose i}$ zero forcing sets of size $i$. Note that when $i\geq \lceil \frac{n}{2}\rceil$, then $n-i-1<i$, and hence ${n-i-1\choose i}=0$. Thus, $\mathcal{Z}(P_n;x)=\sum_{i=1}^n({n\choose i}-{n-i-1\choose i})x^i$.
\qed

\begin{proposition}
\label{prop_cycle}
For $n\geq 3$, $\mathcal{Z}(C_n;x)=\sum_{i=2}^n({n\choose i}-\frac{n}{i}{n-i-1\choose i-1})x^i$.
\end{proposition}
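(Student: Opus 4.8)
The plan is to mirror the treatment of $P_n$ in Proposition~\ref{prop_path}: first characterize the zero forcing sets of $C_n$, then count the non-forcing sets of each size. I would first show that for $n\geq 3$ a set $S\subseteq V(C_n)$ fails to be a zero forcing set if and only if $S$ is an independent set; equivalently, $S$ is a zero forcing set iff $S=V$ or $S$ contains two adjacent vertices (note $V$ is not independent when $n\geq 3$). One direction is immediate: if $S$ is independent and $S\neq V$, then every colored vertex has both of its two neighbors uncolored, so no force can ever occur. For the converse, take adjacent colored vertices and label the cycle $w_0\sim w_1\sim\cdots\sim w_{n-1}\sim w_0$ with $w_0,w_1\in S$; an induction on $k$ then shows that $w_0,\dots,w_k$ eventually become colored, since once $w_0,\dots,w_k$ are colored with $k<n-1$, the vertex $w_k$ has colored neighbor $w_{k-1}$, so either $w_{k+1}$ is already colored or $w_k$ forces it.

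Given this characterization, $z(C_n;i)=\binom{n}{i}-a_i$, where $a_i$ is the number of independent $i$-subsets of $C_n$. To compute $a_i$, I would double-count the pairs $(T,v)$ with $T$ an independent $i$-set and $v\in T$: there are $i\cdot a_i$ of them, and by the vertex-transitivity of $C_n$ there are also $n$ times the number of independent $i$-sets containing a fixed vertex $v$. Such a set must avoid the two neighbors of $v$ and then choose $i-1$ pairwise non-adjacent vertices from the path on the remaining $n-3$ vertices; a gap-distribution (stars-and-bars) count, as in the proof of Proposition~\ref{prop_path}, gives $\binom{n-i-1}{i-1}$ choices. Hence $i\cdot a_i=n\binom{n-i-1}{i-1}$, so $a_i=\frac{n}{i}\binom{n-i-1}{i-1}$ and $z(C_n;i)=\binom{n}{i}-\frac{n}{i}\binom{n-i-1}{i-1}$. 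Summing over $i$ yields the claimed formula; the $i=1$ coefficient is $n-n=0$ (consistent with $Z(C_n)=2$), and the coefficient of $x^2$ is $\binom{n}{2}-\frac{n(n-3)}{2}=n$, matching the $n$ edges of $C_n$, which serves as a sanity check.

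The main obstacle is the forcing characterization — in particular the ``travel around the cycle'' induction showing that a single adjacent pair of colored vertices suffices to color all of $C_n$ — since the remaining enumeration is routine. A variant of the counting step splits on whether a fixed vertex lies in $T$, giving $a_i=\binom{n-i}{i}+\binom{n-i-1}{i-1}$ via the path count, which then simplifies to $\frac{n}{i}\binom{n-i-1}{i-1}$; I would use whichever is cleaner in the writeup, but the double-counting route sidesteps that algebraic identity.
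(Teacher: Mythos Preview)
Your proposal is correct and essentially follows the paper's own argument: both characterize the non-forcing sets of $C_n$ as precisely the independent sets, then count these by fixing a representative vertex (your double-counting of pairs $(T,v)$ is exactly the paper's ``select a representative, count, divide by $i$'') and reducing to a stars-and-bars/path count that yields $\frac{n}{i}\binom{n-i-1}{i-1}$. The only difference is that you spell out the forcing characterization via the ``travel around the cycle'' induction, whereas the paper simply asserts it.
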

\proof
The sets of $C_n$ of size $i$ which are not forcing are those which do not contain adjacent vertices. To count the number of non-forcing sets of size $i$, we can use the following argument: first, select a representative vertex $v$ of $C_n$ and color it; this can be done in $n$ ways. There are $n-i$ indistinguishable uncolored vertices to be placed in the $i$ positions around the remaining $i-1$ colored vertices, where each position must receive at least one uncolored vertex (in order for there not to be any adjacent colored vertices). There are ${n-i-1\choose i-1}$ ways to choose the positions of the uncolored vertices. This can be done for each of the $n$ choices of a representative vertex $v$; however, since we are interested in sets without a representative vertex, and since each set has been counted $i$ times with a different representative vertex, we must divide this quantity by $i$. Thus, there are $\frac{n}{i}{n-i-1\choose i-1}$ ways to choose $i$ vertices from $C_n$ so that no two are adjacent. It follows that there are ${n\choose i}-\frac{n}{i}{n-i-1\choose i-1}$ zero forcing sets of size $i$. Note that when $i\geq \lfloor \frac{n}{2}\rfloor+1$, then $n-i-1<i-1$, and hence ${n-i-1\choose i-1}=0$. Thus, $\mathcal{Z}(C_n;x)=\sum_{i=2}^n({n\choose i}-\frac{n}{i}{n-i-1\choose i-1})x^i$.
\qed

\medskip

\subsection{Wheels}

Let $S$ be a set of $k$ vertices of a cycle $C_n$; we will say these $k$ vertices are \emph{consecutive} if $C_n[S]$ is a path.

\begin{lemma}
\label{lemma_wheel}
Given integers $n>k\geq m\geq 3$, the number of ways to select $k$ labeled vertices of $C_n$ such that at least $m$ of the selected vertices are consecutive is
$$
R_m(n,k):=\sum_{t=1}^n (-1)^{t-1}\frac{n}{t}\binom{n-mt-1}{t-1}\binom{n-(m+1)t}{k-mt}.
$$
\end{lemma}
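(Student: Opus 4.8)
The plan is to count, via inclusion--exclusion, the selections of $k$ labeled vertices of $C_n$ that contain at least one ``long run'' of $m$ consecutive selected vertices. For each vertex $v$ of $C_n$, let $A_v$ be the set of $k$-subsets of $V(C_n)$ in which the $m$ vertices $v, v+1, \ldots, v+m-1$ (indices mod $n$) are all selected; then $R_m(n,k) = |\bigcup_{v} A_v|$. The inclusion--exclusion expansion requires understanding intersections $A_{v_1}\cap\cdots\cap A_{v_t}$: this is the number of $k$-subsets containing the union of $t$ length-$m$ arcs whose starting points are $v_1,\ldots,v_t$. The size of such an intersection depends only on the total number of vertices forced into the set by the union of these arcs, which in turn depends on how the arcs overlap, so the bookkeeping reduces to counting configurations of $t$ length-$m$ arcs on $C_n$ according to the size of their union.

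The key combinatorial step is therefore: for a fixed $t$, sum over all $t$-subsets $\{v_1,\ldots,v_t\}$ of starting points the quantity $\binom{n - |U|}{k - |U|}$, where $U$ is the union of the corresponding arcs. The standard trick here is to group the arcs into maximal blocks of consecutive (overlapping or abutting) arcs. I expect that the cleanest route is a transfer-matrix / gap-counting argument on the cycle: once we decide how the $t$ chosen start-points cluster, the union $U$ breaks into some number of circular ``super-arcs,'' and one counts the ways to place these around $C_n$ with prescribed gaps. After carefully tracking the sizes, the count of configurations with $t$ arcs whose union has a given size collapses — this is where the $\frac{n}{t}\binom{n-mt-1}{t-1}$ factor should emerge (it is exactly the count, analogous to the cycle computation in Proposition~\ref{prop_cycle}, of ways to place $t$ nonoverlapping length-$m$ arcs, i.e. $t$ "blocks" each contributing $m$ mandatory vertices plus at least one gap vertex, around a labeled cycle of length $n$, with the overcounting-by-$t$ correction). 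The residual binomial $\binom{n-(m+1)t}{k-mt}$ then counts the ways to choose the remaining $k-mt$ selected vertices from the $n-mt$ vertices not in the $t$ blocks, minus the $t$ mandatory gap vertices adjacent to the blocks, i.e. from $n-mt-t = n-(m+1)t$ positions.

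Concretely, the steps I would carry out are: (1) set up the events $A_v$ and write $R_m(n,k)=\sum_{t\ge 1}(-1)^{t-1}\sum_{\{v_1,\ldots,v_t\}} |A_{v_1}\cap\cdots\cap A_{v_t}|$; (2) observe that $|A_{v_1}\cap\cdots\cap A_{v_t}| = \binom{n-|U|}{k-|U|}$; (3) reduce the inner sum to a product of a "placement count" for the blocks and the residual binomial, by the key observation that once arcs are allowed to merge, a length-$t$ family of arcs with union of size $s$ can always be re-encoded as $t$ disjoint arcs (each of length $m$, each followed by a forced empty position) — this is the content of the identity that makes the $t$-term of the sum uniform in how overlaps occur; (4) count the placements of $t$ such disjoint blocks on the labeled cycle $C_n$ using the representative-vertex argument of Proposition~\ref{prop_cycle} (choose a marked block start in $n$ ways, distribute the $n - mt$ remaining vertices among $t$ gaps each of size $\ge 1$, giving $\binom{n-mt-1}{t-1}$, then divide by $t$); (5) multiply by $\binom{n-(m+1)t}{k-mt}$ for the remaining free selections and by $(-1)^{t-1}$, and sum.

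The main obstacle will be step (3): making rigorous the claim that the inclusion--exclusion over all (possibly overlapping) families of $t$ arcs can be cleanly repackaged so that each $t$-term factors as (number of ways to place $t$ disjoint length-$m$ blocks with mandatory gaps) $\times$ (residual binomial), without double-counting and with the signs and the $1/t$ correction working out exactly. In particular one must check carefully that when several length-$m$ arcs overlap to form a longer run, the contribution is correctly absorbed — the cleanest way is probably to change the indexing of the inclusion--exclusion from "which length-$m$ windows are fully colored" to a signed sum over "ways to mark $t$ disjoint blocks of $m$ forced-colored vertices," verifying that the alternating sum over how a genuine long run of colored vertices gets marked telescopes to the indicator that the run has length $\ge m$. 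I would also need the boundary convention $\binom{a}{b}=0$ for $a<b$ (already adopted in the Preliminaries) to kill terms with $t$ too large, so the sum is really finite. Once the factorization in step (3) is established, steps (4) and (5) are routine and mirror the cycle computation already done in Proposition~\ref{prop_cycle}.
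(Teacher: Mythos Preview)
Your inclusion--exclusion framework is correct, and your steps (4) and (5) are exactly right. The gap is step (3): with your definition of $A_v$ (the $k$-subsets containing the arc $\{v,\ldots,v+m-1\}$, with no condition on $v-1$), the intersection $\bigcap_{v\in J} A_v$ depends on the overlap pattern of the $t$ arcs, so $|U|$ varies over $J$'s of the same size $t$ and the inner sum does not factor as a single product. You recognize this and propose to ``re-encode'' or ``change the indexing,'' but you have not supplied an actual mechanism---a sign-reversing involution, a bijection, anything---that makes the alternating sum over overlapping arc-families collapse term-by-term to $\frac{n}{t}\binom{n-mt-1}{t-1}\binom{n-(m+1)t}{k-mt}$. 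As written, step (3) is a hope rather than an argument.

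The paper bypasses step (3) entirely by sharpening the events: it sets
\[
A_i \;=\; \Bigl\{S\in\tbinom{[n]}{k}\;:\; i,\ldots,i+m-1\in S \text{ and } i-1\notin S\Bigr\}.
\]
The union $\bigcup_i A_i$ is unchanged, since any $S$ containing $m$ consecutive selected vertices has a maximal such run, and its left endpoint $i$ satisfies $i-1\notin S$. But now any nonempty intersection $\bigcap_{j\in J}A_j$ forces the indices in $J$ to be well-spaced on the cycle: each $j\in J$ claims the $(m+1)$-block $\{j-1,j,\ldots,j+m-1\}$ (with $j-1\notin S$ and the other $m$ vertices in $S$), and the constraints make these blocks pairwise disjoint. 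Hence $\bigl|\bigcap_{j\in J}A_j\bigr|=\binom{n-(m+1)t}{k-mt}$ uniformly over all size-$t$ sets $J$ with nonempty intersection, and counting such $J$ is exactly your step (4). The ``forced empty position'' you allude to in step (3) is precisely this $i-1\notin S$ clause; the paper's insight is to build it into the events from the outset, so that the factorization is automatic rather than something to be proved after the fact.
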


\proof
Label the vertices of $C_n$ with $[n]$ so that for $1\leq i\leq n-1$, the vertices with labels $i$ and $i+1$ are adjacent; define 
\begin{equation*}
A_i := \left\{S \in \binom{[n]}{k}: i,\dots,i+m-1 \in S, i-1 \not\in S\right\},
\end{equation*}
where all arithmetic is mod $n$. Then $\bigcup_{i\in [n]} A_i$ is the set of all ways to pick $k$ labeled vertices from $C_n$ so that at least $m$ of them are consecutive, or the number of $k$-sets $S \subseteq V(C_n)$ so that $C_n[S]$ contains $P_m$ as a subgraph. By the principle of inclusion-exclusion, we have 
\begin{equation}\label{pie}
\abs{\bigcup_{i\in [n]} A_i}
	= \sum_{\emptyset \neq J \subseteq [n]}(-1)^{|J|-1} \abs{\bigcap_{j \in J} A_j}. \\
\end{equation}
We will now give a closed form expression for the right-hand side of (\ref{pie}). First, we will determine the number of sets $J$ of size $t$ for which $\cap_{j\in J}A_j$ is nonempty as well as the size of the intersection in terms of $t$. To this end, we make an observation about the spacing around $C_n$ of the elements of $J$ when the intersection is nonempty. Pick some distinct $i, j \in J$, $S \in \cap_{j\in J} A_j$, and $j - i \mod n < i - j \mod n$ (i.e. the shorter distance around the cycle is from $i$ to $j$). Then $i+1, \dots, i+m-1 \in S$ and $j-1 \notin S$, so at least these $m$ vertices lie strictly between $i$ and $j$.

Each $j \in J$ determines the inclusion in $S$ of $m+1$ consecutive vertices, $m$ of which are in $S$, and for distinct $i, j \in J$, these sets of consecutive vertices are disjoint. Therefore $S \in \cap_{j\in J}A_j$ determines the inclusion in $S$ of $(m+1)|J|$ vertices, $m|J|$ of which are in $S$. We distinguish between the $S \in \cap A_j$ by choosing which $k-m|J|$ of the remaining $n-(m+1)|J|$ vertices are in $S$. With $t = |J|$,
\begin{equation*}
\abs{\bigcap_{j \in J} A_j} = \binom{n-(m+1)t}{k-mt}.
\end{equation*}
All that remains is to determine how many subsets $J$ of size $t$ have nonempty $\bigcap_{j \in J} A_j$. First, assume that $J$ contains the vertex with label 1. Note that $J$ is non-empty if and only if there are at least $m$ vertices between each vertex in $J$. This is equivalent to the well-known problem of placing $n-t$ indistinguishable balls into $t$ distinguishable boxes with each box containing at least $m$ balls, which can be done in $\binom{n-mt-1}{t-1}$ ways.


Fixing any initial vertex of $C_n$ would result in the same number of sequences, but $n\binom{n-mt-1}{t-1}$ overcounts each subset $J$ by a factor of $t$, once for each of $t$ distinct choices from $J$ for this initial vertex. Therefore, the number of subsets $J$ of size $t$ where $\bigcap_{j \in J} A_j$ is nonempty is $\frac{n}{t}\binom{n-mt-1}{t-1}$. Hence \eqref{pie} simplifies to
\begin{equation*}
\abs{\bigcup_{i\in V(C_n)} A_i}= \sum_{t=1}^n (-1)^{t-1}\frac{n}{t}\binom{n-mt-1}{t-1}\binom{n-(m+1)t}{k-mt}.\tag*{\qed}
\end{equation*}
\vspace{9pt}

The \emph{wheel} on $n$ vertices, denoted $W_n$, is the graph obtained by adding a dominating vertex to $C_{n-1}$.

\begin{theorem}
For $n\geq 4$,
$\mathcal{Z}(W_n;x) = \sum_{i=1}^n (z(C_{n-1};i-1)+R_3(n-1,i))x^i$.
\end{theorem}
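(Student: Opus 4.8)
The plan is to count, for each $i$, the zero forcing sets of $W_n$ of size $i$ by splitting on whether the dominating ("hub") vertex $h$ belongs to the set. Write $C := C_{n-1}$ for the rim cycle. First I would establish the easy direction: if $h \in S$, then $S$ is a zero forcing set of $W_n$ if and only if $S \setminus \{h\}$ is a zero forcing set of $C$. The "if" is immediate since forces in $C$ still work in $W_n$ (the extra edges to $h$ do not matter once $h$ is colored and the rim gets colored independently). For the "only if", note that $h$, having $n-1$ neighbors, can never perform a force until all but one rim vertex is colored, so the first $n-3$ forces all take place on the rim using only rim edges; hence those forces witness that $S\setminus\{h\}$ forces all of $C$ except possibly one vertex — and a short argument (a colored rim vertex with exactly one uncolored rim neighbor in $W_n$ also has exactly one uncolored neighbor in $C$, unless that neighbor is adjacent to it in $C$, which it is) shows $S \setminus \{h\}$ in fact forces all of $C$. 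This accounts for the $z(C_{n-1};i-1)$ term, giving $z(C;i-1)$ sets of size $i$ containing $h$.

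Next I would handle the case $h \notin S$, so $S \subseteq V(C)$ with $|S| = i$, and show $S$ is a zero forcing set of $W_n$ if and only if $C[S]$ contains three consecutive vertices (i.e. $P_3$ as a subgraph). For the "if" direction: if $u,v,w$ are consecutive on the rim and all in $S$, then $v$'s only possibly-uncolored neighbor structure — $v$ is adjacent to $u$, $w$, and $h$ — has $u,w$ already colored, so $v$ forces $h$; once $h$ is colored we are in a configuration where $S\cup\{h\}$ is colored, and since $S \cup \{h\}$ contains $h$ plus at least the two rim vertices $u,w$ (in fact a path $P_3$), one checks it forces the rest (e.g., $u$ now has only one uncolored rim neighbor, and forces propagate around the cycle). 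For the "only if" direction: suppose no three vertices of $S$ are consecutive on the rim. Then every rim vertex in $S$ has at least one uncolored rim neighbor among its two rim neighbors unless... — more carefully, I would argue that $h$ cannot be forced, because any rim vertex $v \in S$ either has an uncolored rim neighbor (so $v$ has $\geq 2$ uncolored neighbors counting $h$, and can't force) — wait, if $v$'s two rim neighbors are both colored but $v$ is in $S$, that's three consecutive, contradiction; and if exactly one rim neighbor is colored then $v$ has two uncolored neighbors ($h$ and the other rim neighbor); if neither rim neighbor is colored, $v$ has three uncolored neighbors. So no vertex of $S$ can force, hence the closure is $S$ itself, not all of $W_n$. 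This shows $S$ forces $W_n$ iff $C[S]$ contains $P_3$, and by Lemma~\ref{lemma_wheel} the number of such $S$ of size $i$ is exactly $R_3(n-1,i)$.

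Combining the two cases: $z(W_n;i) = z(C_{n-1};i-1) + R_3(n-1,i)$ for each $i$, which gives the claimed formula after summing against $x^i$. The main obstacle I anticipate is the careful bookkeeping in the "only if" portions — specifically, verifying in the $h\in S$ case that not being able to finish the rim via rim-edges alone is genuinely impossible (ruling out the scenario where $h$ gets forced "early" and then helps), and in the $h\notin S$ case making the forcing-chain argument airtight that $S$ with three consecutive rim vertices really closes to all of $W_n$ rather than stalling. Both reduce to the observation that in $W_n$ a rim vertex has exactly one more neighbor ($h$) than in $C$, so I would isolate that observation as the workhorse and apply it uniformly; everything else is the standard cycle-forcing analysis already used in Proposition~\ref{prop_cycle} together with a direct application of Lemma~\ref{lemma_wheel}.
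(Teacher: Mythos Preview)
Your proposal is correct and follows essentially the same approach as the paper: partition zero forcing sets by whether they contain the hub, reduce the hub-included case to zero forcing on $C_{n-1}$, and characterize the hub-excluded case by the presence of three consecutive rim vertices, then invoke Lemma~\ref{lemma_wheel}. The paper's proof is much terser, simply asserting the two key equivalences (in particular, it states without proof the general fact that for a dominating vertex $v$, a set $S\ni v$ is zero forcing in $G$ iff $S\setminus\{v\}$ is zero forcing in $G-v$, and it justifies the hub-excluded case with the single remark that the dominating vertex must be forced first), whereas you spell out the forcing arguments in more detail; your added justification is sound, though the ``first $n-3$ forces'' count and the parenthetical about $h$ being ``forced early'' in the $h\in S$ case are slightly garbled and should be cleaned up.
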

\proof
Let $v$ be the dominating vertex of $W_n$. The zero forcing sets of $W_n$ of size $i$ can be partitioned into those which contain $v$ and those which do not contain $v$. Since $v$ is a dominating vertex of $W_n$, $S$ is a zero forcing set of $W_n$ of size $i$ which contains $v$ if and only if $S\backslash \{v\}$ is a zero forcing set (of size $i-1$) of $W_n-v$. Since $W_n-v\simeq C_{n-1}$, by Proposition \ref{prop_cycle}, the number of zero forcing sets of $W_n$ of size $i$ which contain $v$ is $z(C_{n-1};i-1)$.
Next, to count the number of zero forcing sets of $W_n$ of size $i$ which do not contain $v$, note that $S \subseteq V(G)\backslash\{v\}$ is a zero forcing set of $W_n$ if and only if $S$ induces a path on at least three vertices, since the dominating vertex must be forced before any other vertices can force. By Lemma \ref{lemma_wheel}, there are $R_3(n-1,i)$ such sets (for $i<3$, we can define $R_3(n-1,i)=0$). By adding $z(C_{n-1};i-1)$ and $R_3(n-1,i)$, we conclude that $\mathcal{Z}(W_n;x)$ is as desired.
\qed


\subsection{Threshold graphs}

A graph $G=(V,E)$ is a \emph{threshold graph} if there exists a real number $T$ and vertex weight function $w:V\to \mathbb{R}$ such that $uv\in E$ if and only if $w(u)+w(v)\geq T$. For a binary string $B$, the \emph{threshold graph generated by $B$}, written $T(B)$, is the graph whose vertices are the symbols in $B$, and which has an edge between a pair of symbols $x$ and $y$ with $x$ to the left of $y$ if and only if $y=1$. It was shown by Chv{\'a}tal and Hammer \cite{chvatal} that every threshold graph is generated by some binary string, and two distinct binary strings yield the same threshold graph if and only if they only differ in the first symbol of the string. To avoid this repetition, without loss of generality, we will deal exclusively with binary strings whose first and second symbols are the same.

A \emph{block} of a binary string $B$ is a maximal contiguous substring consisting either of only 0s or only 1s. A partition of $B$ into its blocks is called the \emph{block partition} of $B$, and when the block partition of $B$ has $t$ blocks, we label the blocks $B_{i}$, $1\leq i\leq t$, and write $B = B_{1}B_{2}\dots B_{t}$. Each $B_{i}$ consisting of 0s is called a 0-block, and each $B_{i}$ consisting of 1s is a 1-block. Similarly, we will call a vertex in a 0-block a 0-vertex, and a vertex in a 1-block a 1-vertex. Finally, we will assume that $T(B)$ is connected, i.e. that $B_{t}$ is a 1-block.

\begin{lemma}
\label{lemma: neighbors of 1s}
Let $T = T(B)$ be a threshold graph on binary string $B$. Then,
\begin{enumerate}
\item[1)] If any 1-vertex is uncolored, then no 1-vertex can force a 0-vertex.
\item[2)] If more than one 1-vertex is uncolored, then no 1-vertex can force any of its neighbors.
\end{enumerate}
\end{lemma}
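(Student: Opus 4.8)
The key structural fact I would use is that in a threshold graph $T(B)$ with blocks $B_1 B_2 \cdots B_t$, the 1-vertices are totally ordered by neighborhood inclusion, and similarly for 0-vertices: if $u$ is a 1-vertex in a later block than a 1-vertex $u'$, then $N[u'] \subseteq N[u]$, and in fact every 1-vertex in $B_i$ (for $i \geq 2$) is adjacent to every other 1-vertex and to every 0-vertex in every block $B_j$ with $j > i$... more precisely, a 1-vertex $y$ is adjacent to $x$ (with $x$ left of $y$) regardless of whether $x$ is a 0- or 1-vertex, and two 1-vertices are always adjacent to each other. So the subgraph induced on all the 1-vertices is a clique, and each 1-vertex is adjacent to all 1-vertices plus some initial segment of the 0-vertices (those 0-vertices lying to its left). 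I would start the proof by recording these adjacency facts.

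**Proof of part 1.** Suppose some 1-vertex $z$ is uncolored. Let $y$ be any 1-vertex; I want to show $y$ cannot force a 0-vertex. If $y = z$ then $y$ is uncolored and cannot force anything, so assume $y$ is colored and $y \neq z$. Since the 1-vertices form a clique, $y \sim z$, so $z$ is an uncolored neighbor of $y$. For $y$ to force a 0-vertex $w$, we would need $w$ to be the *unique* uncolored neighbor of $y$; but $z$ is also an uncolored neighbor of $y$ and $z \neq w$ (as $z$ is a 1-vertex and $w$ is a 0-vertex), so $y$ has at least two uncolored neighbors and cannot force. This handles part 1.

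**Proof of part 2.** Suppose two distinct 1-vertices $z_1, z_2$ are uncolored. Let $y$ be any 1-vertex and let $w$ be any neighbor of $y$; I want to show $y$ cannot force $w$. Again if $y$ is itself uncolored it forces nothing, so assume $y$ is colored, hence $y \notin \{z_1, z_2\}$. Since the 1-vertices form a clique, both $z_1$ and $z_2$ are neighbors of $y$, and they are distinct and both uncolored. At most one of them equals $w$, so $y$ has at least one uncolored neighbor other than $w$ (namely whichever of $z_1, z_2$ is not $w$). Hence $w$ is not the unique uncolored neighbor of $y$, so $y$ cannot force $w$. This gives part 2.

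**Main obstacle.** There is no serious obstacle here; the whole argument rests on the single observation that the 1-vertices induce a clique in a threshold graph $T(B)$, after which both parts are immediate from the definition of the forcing rule. The only thing requiring a little care is justifying that 1-vertices are pairwise adjacent from the string description of $T(B)$: given two 1-vertices, the left one is a symbol $1$ and the right one is a symbol $1$, and by the definition of $T(B)$ there is an edge between a left symbol $x$ and a right symbol $y$ precisely when $y = 1$ — which holds here. So I would make sure to state this clique fact explicitly (perhaps as a one-line remark before the two cases) and then dispatch both parts in a couple of sentences each.
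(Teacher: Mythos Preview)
Your proposal is correct and follows essentially the same approach as the paper: both proofs rest entirely on the observation that the 1-vertices of $T(B)$ induce a clique, after which parts 1) and 2) follow immediately from the forcing rule. Your write-up is more detailed (and in part 2 slightly more careful, explicitly handling the case $w \in \{z_1,z_2\}$), but the argument is the same.
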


\proof
All 1-vertices are adjacent. In the first case, any colored 1-vertex with an uncolored 0-neighbor has the uncolored 1-neighbor as well, and thus cannot force either neighbor. In the second case, every colored 1-vertex has at least two uncolored 1-neighbors and thus cannot force either of them.
\qed

\begin{theorem}
\label{theorem: zfs characterization}
Let $T=T(B)$ be a threshold graph on binary string $B$, $|B|\geq 2$. A set $S \subseteq V(T)$ is a zero forcing set if and only if
\begin{enumerate}
\item[1)] $S$ excludes at most one vertex from each block of $B$, and
\item[2)] In $B$, between any two 1-vertices not in $S$, there is a 0-vertex in $S$.
\end{enumerate}
\end{theorem}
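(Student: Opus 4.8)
The plan is to prove both directions by analyzing how forcing can proceed in $T(B)$, using the adjacency structure of threshold graphs together with Lemma~\ref{lemma: neighbors of 1s}. First I would record the basic facts about neighborhoods: the 1-vertices form a clique, a 0-vertex is adjacent exactly to the 1-vertices lying to its right in $B$, and a 1-vertex is adjacent to every other 1-vertex together with every 0-vertex lying to its left. From this and Lemma~\ref{lemma: neighbors of 1s}(1), the key structural observation is that \emph{no 0-vertex can become colored until every 1-vertex is colored}: a 0-vertex can only be forced by a neighbor, and all its neighbors are 1-vertices, so while some 1-vertex is uncolored no 1-vertex is able to force a 0-vertex. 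I would also note that any two vertices $u,v$ in the same block of $B$ are twins in the sense that $N(u)\setminus\{v\}=N(v)\setminus\{u\}$, so by the reasoning in Theorem~\ref{thm_zf_polynomial_properties}(3) no zero forcing set can omit both of them; this immediately gives the necessity of condition~1.

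For the remaining necessity (condition~2 must hold), I would argue the contrapositive: assume condition~1 holds but there are two uncolored 1-vertices between which $S$ contains no 0-vertex, and choose such a pair $u,u'$ (with $u$ to the left of $u'$) that is as close as possible, so that no uncolored 1-vertex lies strictly between $u$ and $u'$. I would then show that in any chronological list of forces neither $u$ nor $u'$ can be the first of the two to become colored. If it were forced by a 1-vertex, then at that moment two 1-vertices ($u$ and $u'$) are uncolored, contradicting Lemma~\ref{lemma: neighbors of 1s}(2). If it were forced by a 0-vertex $v$, then $v$ lies to the left of the forced vertex and has it as its unique uncolored 1-neighbor; tracing positions, either this forces the other of $u,u'$ to already be colored (contradicting which one is colored first), or $v$ is a colored 0-vertex lying strictly between $u$ and $u'$ — impossible, since by our key observation no 0-vertex is colored while $u,u'$ remain uncolored, and by the choice of the pair no 0-vertex of $S$ lies between them. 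Hence one of $u,u'$ is never colored and $S$ is not forcing.

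For sufficiency, assume $S$ satisfies conditions~1 and~2, and I would exhibit an explicit chronological list of forces. Phase~1 colors all 1-vertices: list the uncolored 1-vertices left to right as $w_1,\dots,w_r$; condition~2 provides a 0-vertex $x_i\in S$ with $w_i$ to the left of $x_i$ and $x_i$ to the left of $w_{i+1}$ for each $i$. Since $x_i$'s neighbors are the 1-vertices to its right, forcing in the order $w_r$ (via $x_{r-1}$), $w_{r-1}$ (via $x_{r-2}$), \dots, $w_2$ (via $x_1$) works, because at each step the chosen $x_i$ has $w_{i+1}$ as its only uncolored neighbor. This leaves $w_1$ as the unique uncolored 1-vertex; it is then colored by a colored 0-vertex lying to its left if the first block of $B$ is a 0-block — such a block has at least two vertices, so condition~1 guarantees a colored one, and it lies to the left of the 1-vertex $w_1$ — and otherwise by a colored vertex of the first block, which is a 1-block whose vertices have no 0-neighbors. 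Phase~2 colors the 0-vertices: now every 1-vertex is colored, and by condition~1 each 0-block has at most one uncolored vertex; processing the 0-blocks from left to right, when we reach a 0-block $B_j$ with an uncolored vertex $x$, the leftmost 1-vertex to the right of $B_j$ has as its 0-neighbors exactly the 0-vertices of the 0-blocks among $B_1,\dots,B_j$, all colored except $x$, so it forces $x$. This completes the closure, so $S$ is a zero forcing set.

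The main obstacle is the necessity of condition~2: the case analysis of who performs the first force on $\{u,u'\}$ and from which side is delicate, and it relies on combining Lemma~\ref{lemma: neighbors of 1s}(2) with the observation that 0-vertices stay uncolored until the 1-vertices are done and with a careful minimal choice of the pair $u,u'$. The sufficiency direction, once the two-phase structure is identified, is essentially bookkeeping.
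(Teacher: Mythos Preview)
Your proof is correct and follows essentially the same strategy as the paper's: necessity of condition~1 via the twin argument, necessity of condition~2 by showing that neither of the two uncolored 1-vertices can be forced (the paper does this directly by a position analysis of any 0-vertex in $S$ without bothering to select a minimal pair, but your version works as well), and sufficiency via the same two-phase forcing procedure that colors the 1-vertices from right to left and then the 0-vertices from left to right. The only notable organizational difference is that you isolate the observation ``no 0-vertex becomes colored until every 1-vertex is colored'' as a standalone fact at the outset, whereas the paper derives the needed consequence inline; also note a small slip in phrasing---``no 0-vertex is colored while $u,u'$ remain uncolored'' should read ``no 0-vertex \emph{becomes} colored,'' since 0-vertices of $S$ are of course colored from the start, which is exactly what your next clause handles.
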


\begin{proof}
($\Rightarrow$) Let $S$ be a zero forcing set of $T$. We will show conditions 1) and 2) both hold. Suppose first that $x$ and $y$ are vertices in the same block. Then $N(x)\backslash\{y\}= N(y)\backslash\{x\}$, so if $S$ excludes both $x$ and $y$, then any vertex that could force $x$ or $y$ would have at least two uncolored neighbors, neither of which can be forced. Therefore, $S$ cannot exclude both $x$ and $y$ and so condition 1) holds.

Now, suppose there are two 1-vertices $x$ and $y$ not in $S$ and that every 0-vertex in $B$ between $x$ and $y$ is also not in $S$. By Lemma \ref{lemma: neighbors of 1s}, no 1-vertex can force anything until either $x$ or $y$ is forced by some 0-vertex. Since 0-vertices are not adjacent to other 0-vertices, they cannot force other 0-vertices; thus, only 0-vertices in $S$ could possibly force $x$ or $y$. However, 0-vertices that come before $x$ and $y$ in $B$ are adjacent to both $x$ and $y$ (and hence cannot force), 0-vertices between $x$ and $y$ are not in $S$, and 0-vertices that come after $x$ and $y$ are not adjacent to either. Therefore $S$ must contain a 0-vertex between any two 1-vertices not in $S$ and condition 2) holds.

($\Leftarrow$) Let $S$ be a set which satisfies conditions 1) and 2). We will show that $S$ is a zero forcing set. If $B$ has a single block, this block is a 1-block and $T(B)$ is a complete graph, and both conditions clearly hold. Similarly, if $B$ has a single 1-block which consists of a single element, then $T(B)$ is a star and both conditions hold. Hence, we can assume $B$ has at least two 1-vertices.

We will show that the forcing in $T(B)$ happens sequentially in two stages. First, the 1-vertices not in $S$ are forced in order from right to left, each by a colored 0-vertex in the 0-block immediately to its left. Second, the 0-vertices not in $S$ are forced in order from left to right, each by a colored 1-vertex in the 1-block immediately to its right. These stages are described in more detail below. Recall our assumptions that $T(B)$ is connected and that the first element in $B$ is the same as the second; hence, the rightmost block $B_t$ is a 1-block and the leftmost block $B_1$ has at least two vertices.

\emph{Stage 1:} Let $v$ be the rightmost uncolored 1-vertex in $B$. If $v$ is the only uncolored 1-vertex in $B$, then since $B_1$ has at least two vertices, by condition 1), one of them must be in $S$ and can therefore force $v$. If $v$ is not the only uncolored 1-vertex in $B$, by condition 2), there is a colored 0-vertex $w$ between $v$ and the next uncolored 1-vertex. The  neighborhood of $w$ is the set of 1-vertices to the right of $w$, which is exactly $v$ and all of the 1-vertices to the right of $v$, which are already colored. Therefore $w$ can force $v$. Inductively, all 1-vertices in $B$ get colored.

\emph{Stage 2:} Let $v$ be the leftmost uncolored 0-vertex in $B$. Consider a 1-vertex $w$ in the block immediately to the right of $v$. The neighborhood of $w$ is the set of all vertices to the left of $w$, and the set of 1-vertices to the right of $w$. Since Stage 1 is complete, $w$ and all other 1-vertices are colored.  By condition 1), $v$ is the only uncolored 0-vertex in its block; moreover, all 0-vertices in the blocks to the left of $v$ are already colored. Therefore $w$ can force $v$. Inductively, all 0-vertices in $B$ get colored.
\end{proof}

This characterization of zero forcing sets can also be stated in terms of a selection of elements within blocks of the binary string, as follows.

\begin{corollary}
\label{corollary: block description of zfs}
Let $B_1\ldots B_t$ be the block partition for a binary string $B$. Then for any zero forcing set $S$ of the threshold graph $T(B)$, there is a set of block indices $A \subseteq \{1,\ldots,t\}$ and a corresponding set of symbol indices $J = \{j_{i}\in \{1,\ldots,|B_{i}|\}:i \in A\}$ that uniquely identifies $S$. In particular, $S$ is the set of size $n-|A|$ that contains all vertices except the $j_i^{\text{th}}$ vertex from block $B_i$ for each $i\in A$.
\end{corollary}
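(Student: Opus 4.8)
The plan is to translate the vertex-selection characterization of Theorem \ref{theorem: zfs characterization} directly into the language of blocks, and to check that this translation is a bijection (i.e., that the data $(A,J)$ is well-defined and uniquely recovers $S$). First I would observe that condition 1) of Theorem \ref{theorem: zfs characterization} says precisely that a zero forcing set $S$ omits at most one vertex from each block $B_i$. Hence the set of ``deficient'' blocks, namely $A := \{i \in \{1,\ldots,t\} : S \text{ does not contain some vertex of } B_i\}$, is well-defined, and for each $i \in A$ there is exactly one omitted vertex in $B_i$; letting $j_i \in \{1,\ldots,|B_i|\}$ be its position within $B_i$ gives the symbol-index set $J = \{j_i : i \in A\}$. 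Conversely, given $(A,J)$, the set $S$ is forced to be $V(T) \setminus \{\text{the } j_i^{\text{th}} \text{ vertex of } B_i : i \in A\}$, which is exactly the size-$(n - |A|)$ set described in the statement, so the correspondence $S \mapsto (A,J)$ is injective with the stated inverse.

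The only substantive point is to confirm that $A$ (together with $J$) really does determine $S$ and is determined by it without ambiguity, which amounts to noting two things: (i) removing one specified vertex from each block in $A$ and no vertex from any block outside $A$ yields a set of size exactly $n - |A|$ (since the blocks partition $V(T)$ and the removed vertices are distinct), and (ii) two different pairs $(A,J)$ yield different sets $S$, because the pair $(A,J)$ is literally read off from which vertices are missing from $S$. I would state these observations in a sentence or two each; there is no real obstacle, since all the graph-theoretic content (why such an $S$ is a zero forcing set) has already been established in Theorem \ref{theorem: zfs characterization}, and the corollary is purely a repackaging.

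I expect the ``main obstacle'' to be almost entirely cosmetic: making sure the indexing conventions are stated cleanly — in particular that $j_i$ indexes within the block $B_i$ rather than within $B$ as a whole, and that $A$ is allowed to be empty (corresponding to $S = V(T)$, with $J = \emptyset$). I would also remark that condition 2) of Theorem \ref{theorem: zfs characterization} imposes a further restriction on which pairs $(A,J)$ can arise, but since the corollary only claims that every zero forcing set is \emph{encoded} by such a pair (not that every pair encodes a zero forcing set), this restriction does not need to be spelled out here. The proof is therefore short: define $A$ and $J$ from $S$ using condition 1), verify $|S| = n - |A|$, and note the encoding is reversible.

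\begin{proof}
Let $S$ be a zero forcing set of $T(B)$. By condition 1) of Theorem \ref{theorem: zfs characterization}, for each block $B_i$ the set $S$ contains all of $B_i$ except possibly a single vertex. Define
\[
A := \{i \in \{1,\ldots,t\} : B_i \not\subseteq S\},
\]
and for each $i \in A$ let $j_i \in \{1,\ldots,|B_i|\}$ be the position, within $B_i$, of the unique vertex of $B_i$ not in $S$; set $J := \{j_i : i \in A\}$ (with $A = J = \emptyset$ when $S = V(T)$). Since the blocks $B_1,\ldots,B_t$ partition $V(T)$ and $S$ omits exactly one vertex from $B_i$ for each $i \in A$ and no vertex from any other block, the omitted vertices are distinct and
\[
|S| = n - |A|,
\]
with $S$ equal to the set of all vertices of $T(B)$ except the $j_i^{\text{th}}$ vertex of $B_i$ for each $i \in A$. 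Finally, the pair $(A,J)$ is recovered from $S$ as the list of blocks from which a vertex is missing together with the positions of those missing vertices, so $(A,J)$ uniquely identifies $S$.
\end{proof}
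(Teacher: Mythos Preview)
Your proposal is correct and matches the paper's approach: the paper states this corollary without proof, treating it as an immediate repackaging of condition 1) of Theorem \ref{theorem: zfs characterization}, which is exactly what you spell out. Your added remarks about indexing conventions and the role of condition 2) are accurate and appropriate.
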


Corollary \ref{corollary: block description of zfs} shows that any zero forcing set in a threshold graph can be described by listing the blocks in which a single vertex is not part of the set, and identifying which vertex this is within each block. However, in order to satisfy Condition 2) of Theorem \ref{theorem: zfs characterization}, the selection of blocks must be made carefully when a vertex in a $0$-block of size $1$ is excluded from a zero forcing set; see Algorithm 1 for a formal description of how to find all sets of block indices corresponding to zero forcing sets.

\begin{algorithm2e}[h]
\textbf{Input:} Binary string $B=B_1\ldots B_t$\;
\textbf{Output:} Set $\mathcal{A}$ of all sets of block indices corresponding to zero forcing sets of $T(B)$\;
\uIf{$t \mod 2=1$}{
$\mathcal{A}_1\leftarrow\{(\emptyset,0),(\{1\},1)\}$\;
$s\leftarrow 3$\;
}
\Else{
$\mathcal{A}_2\leftarrow\{(\emptyset,0),(\{1\},0),(\{2\},1),(\{1,2\},1)\}$\;
$s\leftarrow 4$\;
}
\While{$s\leq t$}{
$\mathcal{A}_s\leftarrow\emptyset$\;
\For{$(A,k)\in\mathcal{A}_{s-2}$}{
\uIf{$k=1$ \textbf{\emph{and}} $|B_{s-1}|=1$}{
$\mathcal{A}_s\leftarrow \mathcal{A}_s \cup \{(A,0),(A\cup\{s-1\},1),(A\cup\{s\},1)\}$\;}
\Else{
$\mathcal{A}_s\leftarrow \mathcal{A}_s \cup \{(A,0),(A\cup\{s-1\},0),(A\cup\{s\},1),(A\cup\{s-1,s\},1)\}$\;}
}
$s\leftarrow s+2$\;
}
\Return{$\mathcal{A}\leftarrow\{A:(A,k)\in \mathcal{A}_t, k\in \{0,1\}\}$\;}
\caption{Finding all sets of block indices corresponding to zero forcing sets}
\end{algorithm2e}

\begin{theorem}
\label{threshold zero forcing polynomial}
Let $B$ be a binary string with block partition $B_1\ldots B_t$, and let $\mathcal{A}$ be the output of Algorithm 1. Then,

\begin{equation}\label{equation: threshold polynomial theorem}
\mathcal{Z}(T(B); x) = \sum_{A \in \mathcal{A}} \left[ \left( \prod_{i\in A} |B_i| \right) x^{n - |A|} \right]= \sum_{k=n-t}^n\left[\sum_{\substack{A\in \mathcal{A} \\ |A| = n-k}}\left(\prod_{i\in A} |B_i| \right)\right]x^{k}.
\end{equation} 
\end{theorem}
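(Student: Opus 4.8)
The plan is to combine Corollary~\ref{corollary: block description of zfs} with the correctness of Algorithm~1. By Corollary~\ref{corollary: block description of zfs}, every zero forcing set $S$ of $T(B)$ is uniquely described by a pair $(A,J)$, where $A\subseteq\{1,\ldots,t\}$ is the set of blocks from which a vertex is omitted and $J=\{j_i\}_{i\in A}$ records which vertex of $B_i$ is omitted. Conversely, I would first check that the map $S\mapsto A$ and the choice of $J$ is genuinely a bijection onto the set of valid $(A,J)$-pairs: given a fixed valid $A$ (i.e., one arising from a zero forcing set), the symbol index $j_i$ may be chosen \emph{freely} in $\{1,\ldots,|B_i|\}$ for each $i\in A$, since Conditions~1) and 2) of Theorem~\ref{theorem: zfs characterization} depend only on \emph{which} blocks are touched and whether the omitted vertex in a $0$-block of size $1$ is present, not on the position of the omitted vertex within a larger block. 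Hence for a fixed valid $A$ there are exactly $\prod_{i\in A}|B_i|$ zero forcing sets, each of size $n-|A|$. Summing over all valid $A$ gives the first equality in \eqref{equation: threshold polynomial theorem}, and regrouping by the common exponent $k=n-|A|$ gives the second; the exponent ranges over $n-t\le k\le n$ because $0\le|A|\le t$.

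The remaining ingredient is that $\mathcal{A}$, the output of Algorithm~1, is precisely the set of valid block-index sets $A$, i.e., those $A$ for which the set $S$ obtained by deleting one vertex from each block in $A$ satisfies (after the free choice of $J$) Conditions~1) and 2) of Theorem~\ref{theorem: zfs characterization}. Condition~1) is automatic from the construction of $S$ (one vertex omitted per block), so the content is Condition~2): between any two omitted $1$-vertices there must be an omitted-free... no, rather a \emph{present} $0$-vertex. I would prove $\mathcal{A}$ is correct by induction on $t$, matching the structure of the algorithm. The base cases ($t$ odd, handled by $\mathcal{A}_1$; $t$ even, handled by $\mathcal{A}_2$) are small and can be checked directly against Theorem~\ref{theorem: zfs characterization}. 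For the inductive step, note the algorithm processes blocks two at a time, appending a $0$-block $B_{s-1}$ and a $1$-block $B_s$ to a prefix already handled in $\mathcal{A}_{s-2}$. The auxiliary bit $k\in\{0,1\}$ attached to each $A$ should be interpreted as recording whether the rightmost $1$-block of the current prefix has its omitted vertex still ``pending'' — more precisely, whether Condition~2) would be violated by omitting a $1$-vertex from the next $1$-block without an intervening present $0$-vertex. I would verify that the four (or three) cases in the \textbf{for} loop correctly update this bit and correctly enumerate the four (or three) ways to extend a valid prefix: omit nothing new, omit from $B_{s-1}$ only, omit from $B_s$ only, or omit from both; the three-way branch occurs exactly when the previous $1$-block is ``pending'' ($k=1$) and $B_{s-1}$ has size $1$, because then omitting the unique vertex of $B_{s-1}$ together with a $1$-vertex of $B_s$ would leave two omitted $1$-vertices with no present $0$-vertex between them, forcing that combined option to drop out.

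The main obstacle I anticipate is precisely pinning down the semantics of the bit $k$ and showing the case analysis in Algorithm~1 is exhaustive and sound — that is, translating Condition~2) of Theorem~\ref{theorem: zfs characterization}, which is a global statement about all pairs of omitted $1$-vertices, into the local two-blocks-at-a-time update rule. The key observation making this tractable is that Condition~2) only ever ``looks back'' to the most recent omitted $1$-vertex, so a single bit of state suffices, and the only subtlety is the interaction with $0$-blocks of size $1$ (where omitting the vertex leaves no present $0$-vertex in that block to satisfy future instances of Condition~2)). Once the invariant ``$(A,k)\in\mathcal{A}_s$ iff $A$ is a valid block-index set for the prefix $B_1\cdots B_s$ and $k$ records whether a present $0$-vertex lies to the right of the last omitted $1$-vertex'' is stated correctly, the inductive verification is a routine (if slightly tedious) check of the branches. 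With the correctness of $\mathcal{A}$ in hand, \eqref{equation: threshold polynomial theorem} follows immediately from the counting argument in the first paragraph.
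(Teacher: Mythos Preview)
Your proposal is correct and follows essentially the same approach as the paper: both establish that $\mathcal{A}$ is exactly the collection of valid block-index sets by interpreting the auxiliary bit $k$ as tracking whether a present $0$-vertex has appeared since the most recently omitted $1$-vertex, and both then use the indistinguishability of vertices within a block to obtain the factor $\prod_{i\in A}|B_i|$. The paper argues the two containments for $\mathcal{A}$ directly rather than framing the verification as an explicit induction with a stated invariant, but the substance of the case analysis (in particular the role of $0$-blocks of size $1$) is the same.
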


\proof
We will first show that $\mathcal{A}$ is the collection of all sets of block indices for which there is at least one zero forcing set $S$ fitting the description of Corollary \ref{corollary: block description of zfs}. More precisely, we will show that $A\in \mathcal{A}$ if and only if for each set $J=\{j_i:i \in A\}$ with $j_i\in B_i$ for $i\in A$, the set $S:=V(T(B))\backslash J$ is a zero forcing set of $T(B)$.

In Algorithm 1, for $s\leq t$ such that $B_s$ is a 1-block, each element of $\mathcal{A}_s$ is an ordered pair $(A,k)$: the first entry is a set $A$ of block indices such that there exists a zero forcing set of the threshold graph generated by $B_1\dots B_s$ that excludes one vertex from each block whose index is in $A$; the second entry $k$ indicates when vertices from a $1$-block cannot be excluded. In particular, $k=0$ indicates that either no $1$-block has been excluded thus far or a $1$-block has been excluded and there is a $0$-vertex included after the rightmost excluded $1$-block, and $k=1$ indicates that a $1$-block has been excluded and there is no $0$-vertex included after the rightmost excluded $1$-block.

Let $A$ be an arbitrary element of $\mathcal{A}$ and $J$ be an arbitrary set of indices, one from each block $B_i$ for $i\in A$. We will show that $S:=V(T(B))\backslash J$ is a zero forcing set of $T(B)$. $S$ satisfies Condition 1) of Theorem \ref{theorem: zfs characterization} since $S$ excludes only one vertex from each block $B_i$, $i\in A$. To see that $S$ satisfies Condition 2), first note that in order for a $1$-block to be excluded at any stage of Algorithm 1, $(A\cup \{s\},1)$ or $(A\cup \{s-1,s\},1)$ is added to $\mathcal{A}_s$. In both cases, the indicator entry of the corresponding set is $k=1$. Now, let $a,b\in A$ be the indices of two $1$-blocks selected, and assume without loss of generality that there are no other $1$-block indices in $A$ between $a$ and $b$. If any $0$-block between $B_a$ and $B_b$ has length greater than $1$, then there must be a $0$-block in $A$ between $B_a$ and $B_b$. Thus suppose all 0-blocks between $B_a$ and $B_b$ are of length $1$, and that the index of each one appears in $A$. In Algorithm 1, when the indicator entry is $k=1$, the $0$-block is of length $1$ and the index of the $0$-block is added to $A$, the indicator coordinate stays $1$ and an index corresponding to a $1$-block is not added. This is a contradiction since then $b$ could not be added to $A$.

Now, let $S=V(T(B))\backslash J$ be a zero forcing set of $T(B)$ for some $J$ which includes a vertex of $B_i$ for each $i \in A$; we will show that $A\in \mathcal{A}$. The only case in which Algorithm 1 does not add every possible continuation of $(A,k)$ to $\mathcal{A}_s$ is when $k=1$ and $|B_{s-1}|=1$. The indicator $k=1$ implies that the after the most recently excluded $1$-vertex, every $0$-vertex has been excluded. Thus, in this case any vertex set excluding one vertex from each block with index in $A\cup\{s-1,s\}$ would exclude two $1$-vertices in a row without a $0$-vertex in between, contradicting Condition 2) of Theorem \ref{theorem: zfs characterization}. Thus no set of block indices with a corresponding zero forcing set contains $A\cup\{s-1,s\}$, so only the sets not in $\mathcal{A}$ do not have a corresponding zero forcing set.

The vertices in a block of $B$ are indistinguishable, so once we choose which blocks will be missing one vertex by choosing $A$, we only need to count how many ways there are to exclude one vertex from each chosen block, which can be done in exactly $\prod_{i\in A} |B_i|$ ways. This gives the first equality of \eqref{equation: threshold polynomial theorem}. The second equality follows immediately from combining terms with the same power of $x$.
\qed

\medskip
One result that follows from Theorem \ref{threshold zero forcing polynomial} is that there are arbitrarily large sets of nonisomorphic graphs that share a zero forcing polynomial; see Theorem \ref{thm_structural_zf} for details. Note also that the intermediate sets $\mathcal{A}_s$ produced by Algorithm 1 can be used together with (\ref{equation: threshold polynomial theorem}), at no additional cost, to give the zero forcing polynomials of the threshold graphs generated by $B_1\ldots B_s$ for any $s\leq t$ such that $B_s$ is a 1-block.

\section{Structural properties}
\label{section_structural}
In this section, we will show several structural results about the zero forcing polynomial of a graph. We will first show that the zero forcing polynomial of a disconnected graph is multiplicative over its connected components. 

\begin{proposition}
\label{prop_multiplicative}
If $G$ is a graph such that $G\simeq G_1\dot\cup G_2$, then $\mathcal{Z}(G;x)=\mathcal{Z}(G_1;x)\mathcal{Z}(G_2;x)$.
\end{proposition}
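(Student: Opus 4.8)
The plan is to reduce the claim to one structural observation --- that zero forcing in a disconnected graph proceeds independently within each connected component --- and then to recognize the resulting count of forcing sets by size as a polynomial product. Concretely, I would first prove the equivalence: a set $S\subseteq V(G)$ is a zero forcing set of $G\simeq G_1\dot\cup G_2$ if and only if $S\cap V(G_1)$ is a zero forcing set of $G_1$ and $S\cap V(G_2)$ is a zero forcing set of $G_2$. The key point is that no vertex of $G_1$ has a neighbor in $G_2$ and vice versa, so whenever the color change rule lets a colored vertex $u$ force an uncolored neighbor $v$ in $G$, both $u$ and $v$ lie in the same $G_\ell$, and $u$ has exactly one uncolored neighbor within that component. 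Hence applying the color change rule to $S$ in $G$ is the same as applying it simultaneously to $S\cap V(G_1)$ in $G_1$ and to $S\cap V(G_2)$ in $G_2$, and the closure of $S$ in $G$ is the disjoint union of the closure of $S\cap V(G_1)$ in $G_1$ with the closure of $S\cap V(G_2)$ in $G_2$. Therefore the closure of $S$ is all of $V(G)$ exactly when each part equals the whole corresponding component, which is the stated equivalence.

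Next I would count by cardinality. Since $V(G_1)\cap V(G_2)=\emptyset$, the assignment $S\mapsto(S\cap V(G_1),\,S\cap V(G_2))$ is a bijection from the zero forcing sets of $G$ to the pairs $(S_1,S_2)$ with $S_\ell$ a zero forcing set of $G_\ell$, and it satisfies $|S|=|S_1|+|S_2|$. Consequently
\[
z(G;i)=\sum_{j=0}^{i} z(G_1;j)\,z(G_2;i-j)
\]
for every $i$. Because $G_1$ and $G_2$ each have at least one vertex, $z(G_\ell;0)=0$, so no constant term is lost, and the displayed convolution is exactly the coefficient of $x^i$ in $\mathcal{Z}(G_1;x)\,\mathcal{Z}(G_2;x)$. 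Summing over $i$ yields $\mathcal{Z}(G;x)=\mathcal{Z}(G_1;x)\,\mathcal{Z}(G_2;x)$.

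I do not expect a serious obstacle here; the one place requiring care is making the ``forcing happens componentwise'' step rigorous, which I would do by a short induction on the length of a chronological list of forces (using that every forcing chain of $G$ stays inside a single component), and checking that the index conventions line up so that the set-partition count is genuinely the polynomial product and not off by a shift.
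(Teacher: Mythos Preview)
Your proposal is correct and follows essentially the same approach as the paper: establish that $S$ is a zero forcing set of $G_1\dot\cup G_2$ if and only if $S\cap V(G_\ell)$ is a zero forcing set of $G_\ell$ for $\ell=1,2$, then read off the convolution $z(G;i)=\sum_{i_1+i_2=i}z(G_1;i_1)z(G_2;i_2)$ as the coefficient identity for the polynomial product. Your write-up is in fact slightly more careful than the paper's, which simply asserts the componentwise independence without the inductive justification you outline.
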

\begin{proof}
A zero forcing set of size $i$ in $G$ consists of a zero forcing set of size $i_1$ in $G_1$ and a zero forcing set of size $i_2=i-i_1$ in $G_2$. Since zero forcing sets of size $i_1$ and $i_2$ can be chosen independently in $G_1$ and $G_2$ for each $i_1\geq Z(G_1)$, $i_2\geq Z(G_2)$, and since $z(G_1;i_1)z(G_2;i_2)=0$ for each $i_1<Z(G_1)$ or $i_2<Z(G_2)$, it follows that $z(G;i)=\sum_{i_1+i_2=i}z(G_1;i_1)z(G_2;i_2)$. The left-hand-side of this equation is the coefficient of $x^i$ in $\mathcal{Z}(G)$, and since $\mathcal{Z}(G_1;x)=\sum_{i=Z(G_1)}^{|V(G_1)|}z(G_1;i)x^i$ and $\mathcal{Z}(G_2;x)=\sum_{i=Z(G_2)}^{|V(G_2)|}z(G_2;i)x^i$, the right-hand-side of the equation is the coefficient of $x^i$ in $\mathcal{Z}(G_1;x)\mathcal{Z}(G_2;x)$. Thus, $\mathcal{Z}(G_1;x)\mathcal{Z}(G_2;x)$ and $\mathcal{Z}(G;x)$ have the same coefficients and the same degree, so they are identical. 
\end{proof}

The next result lists some other basic facts about the zero forcing polynomial.

\begin{theorem}
\label{thm_structural_zf}
Let $G=(V,E)$ be a graph. Then,
\begin{enumerate}
\item $z(G;i)=0$ if and only if $i<Z(G)$,
\item Zero is a root of $\mathcal{Z}(G;x)$ of multiplicity $Z(G)$,
\item $\mathcal{Z}(G;x)$ is strictly increasing in $[0,\infty)$.
\item There exist arbitrarily large sets of graphs which all have the same zero forcing polynomials. 
\item A connected graph and a disconnected graph can have the same zero forcing polynomial.
\item The zero forcing polynomial of a graph can have complex roots. 
\end{enumerate}
\end{theorem}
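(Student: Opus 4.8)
The plan is to verify each of the six items in Theorem~\ref{thm_structural_zf} by assembling results already proved in the paper, plus one or two small explicit computations, rather than developing new machinery.

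\textbf{Items 1--3.} For item~1, the direction ``$i<Z(G)\Rightarrow z(G;i)=0$'' is immediate from the definition of $Z(G)$ as the minimum size of a zero forcing set; the converse direction is that for every $i\ge Z(G)$ there is a zero forcing set of size $i$, which follows by taking a minimum zero forcing set and adjoining $i-Z(G)$ arbitrary extra vertices (a superset of a zero forcing set is a zero forcing set). Item~2 is then just a restatement of item~1 in the language of roots: $\mathcal{Z}(G;x)=\sum_{i\ge Z(G)}z(G;i)x^i = x^{Z(G)}\bigl(\sum_{j\ge0}z(G;Z(G)+j)x^{j}\bigr)$, and the bracketed factor has nonzero constant term $z(G;Z(G))\ge1$, so $0$ is a root of multiplicity exactly $Z(G)$. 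For item~3, on $[0,\infty)$ all coefficients $z(G;i)$ are nonnegative integers, at least one is positive (namely $z(G;n)=1$ by Theorem~\ref{thm_zf_polynomial_properties}), and the derivative $\mathcal{Z}'(G;x)=\sum_i i\,z(G;i)x^{i-1}$ has nonnegative coefficients with a positive leading one, so $\mathcal{Z}'(G;x)>0$ for $x>0$ and $\mathcal{Z}(G;x)$ is strictly increasing there.

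\textbf{Items 4--5.} For item~4 I would invoke the threshold-graph machinery of Section~\ref{section_characterizations}: by Theorem~\ref{threshold zero forcing polynomial} the zero forcing polynomial of $T(B)$ depends only on the multiset of block sizes together with the combinatorial structure governing Algorithm~1, and one can produce arbitrarily many nonisomorphic threshold graphs with the same block-size data in the relevant arrangement. A clean concrete choice: strings of the form $11\,0^{a_1}\,1^{b_1}\,0^{a_2}\,1^{b_2}\cdots$ where all $0$-blocks have length $\ge2$ so that Algorithm~1 always takes its ``Else'' branch; then $\mathcal{A}$ and hence $\mathcal{Z}(T(B);x)$ are invariant under permuting the interior blocks, while distinct permutations give nonisomorphic graphs once the block sizes are suitably distinct — giving $k!$-sized families. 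For item~5, I would exhibit a small explicit example: find a connected graph $H$ whose zero forcing polynomial happens to factor as $\mathcal{Z}(H_1;x)\mathcal{Z}(H_2;x)$ for graphs $H_1,H_2$, since by Proposition~\ref{prop_multiplicative} the latter product is $\mathcal{Z}(H_1\dot\cup H_2;x)$. A natural candidate to check is something like $P_3$ versus a suitable small graph, or comparing $\mathcal{Z}(C_4;x)$-type polynomials; I would compute a couple of polynomials from the closed forms in Propositions~\ref{prop_path}--\ref{prop_cycle} and Theorem~\ref{threshold zero forcing polynomial} and match them.

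\textbf{Item 6.} This again reduces to an explicit computation: pick a family whose polynomial is known in closed form and for which the ``reduced'' polynomial $\mathcal{Z}(G;x)/x^{Z(G)}$ has degree $\ge2$ with a non-real root — e.g.\ a cycle $C_n$ for suitable $n$, using Proposition~\ref{prop_cycle}, or a short threshold string via Theorem~\ref{threshold zero forcing polynomial}, or $P_n$ via Proposition~\ref{prop_path}. Compute the coefficients for a small $n$, factor out the forced power of $x$, and check the discriminant (or just numerically locate the roots) of a quadratic or cubic factor to confirm complex roots appear.

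\textbf{Main obstacle.} None of the items requires a hard argument; the real work is in items~4, 5, and 6, where I need to actually select good witnesses and carry out the bookkeeping — in particular, for item~4, making precise the claim that Algorithm~1's output (and thus the coefficient sums) is genuinely invariant under the block permutations I use while the underlying graphs remain pairwise nonisomorphic, and for items~5--6, locating small examples where the polynomials coincide / have complex roots. I expect item~4 to be the most delicate, since it is the one place a sloppy invariance claim could fail; the rest is routine verification.
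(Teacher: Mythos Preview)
Your plan is correct and mirrors the paper's proof: items 1--3 are handled identically, and for item~4 the paper uses exactly your threshold-graph idea (concretely, 0-blocks all of size $2$ and 1-blocks of pairwise distinct sizes $2,\ldots,k$, permuted, so that $\mathcal{A}=2^{[t]}$ and the polynomial depends only on the multiset of block sizes). For items~5 and~6 you have the right strategy but not yet the witnesses; the paper's choices are sharper than a search: for item~5 take $K_{a,b}$ versus $K_a\dot\cup K_b$ (with $a,b\ge 2$), whose polynomials both equal $x^{a+b}+(a+b)x^{a+b-1}+ab\,x^{a+b-2}$ by Propositions~\ref{prop_clique}, \ref{prop_multipartite}, and \ref{prop_multiplicative}; for item~6 take $P_4$, whose polynomial $2x+6x^2+4x^3+x^4$ (Proposition~\ref{prop_path}) has a conjugate pair of nonreal roots.
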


\proof
The numbers of the proofs below correspond to the numbers in the statement of the theorem. 
\begin{enumerate}
\item[1.] Follows from the definition of $z(G;i)$.
\item[2.] Follows from the fact that the first nonzero coefficient of $\mathcal{Z}(G;x)$ corresponds to $x^{Z(G)}$.
\item[3.] Follows from the fact that all coefficients of $\mathcal{Z}(G;x)$ are nonnegative, and hence the derivative of $\mathcal{Z}(G;x)$ is a polynomial with nonnegative coefficients.
\item[4.] 
Fix $k > 2$ and consider the class of threshold graphs whose binary string $B$ has $(k-1)$ 1-blocks of sizes $2$ through $k$ (in any order) and between each two consecutive 1-blocks, a 0-block of size 2. These graphs are distinct for each permutation of the lengths of the 1-blocks, so this class of graphs has $(k-1)!$ members for any $k > 2$. Each such binary string $B$ has exactly $2k-3$ blocks and all blocks in $B$ have size at least $2$. Thus, any choice of block indices $A\in 2^{[2k-3]}$ corresponds to some zero forcing set of $T(B)$, since condition 2) of Theorem \ref{theorem: zfs characterization} is satisfied for such a binary string $B$ as long as condition 1) is satisfied. Theorem \ref{threshold zero forcing polynomial} shows that the zero forcing polynomial of a threshold graph only depends on which choices of block indices correspond to zero forcing sets, and on the sizes of the blocks, which are the same for all the graphs in this class. Thus, all graphs in this class have the same zero forcing polynomial. 
\item[5.] We claim that for any $a,b\geq 2$, $K_a\dot\cup K_b$ and $K_{a,b}$ have the same zero forcing polynomial. Indeed, $\mathcal{Z}(K_a\dot\cup K_b;x)=\mathcal{Z}(K_a;x)\mathcal{Z}(K_b;x)=(x^a+ax^{a-1})(x^b+bx^{b-1})=abx^{a+b-2}+(a+b)x^{a+b-1}+x^{a+b}=\mathcal{Z}(K_{a,b};x)$, where the first equality follows from Proposition \ref{prop_multiplicative}, the second equality follows from Proposition \ref{prop_clique}, and the last equality follows from Proposition~\ref{prop_multipartite}.
\item[6.] By Proposition \ref{prop_path}, $\mathcal{Z}(P_4;x)=2x+6x^2+4x^3+x^4$. This polynomial has two real roots $x=0$ and $x\approx -0.46$, and two complex roots $x\approx -1.77 \pm 1.11i$. \qed
\end{enumerate}

The next result of this section concerns the unimodality of the zero forcing polynomial. We first recall a well-known theorem due to Hall \cite{hall}. A \emph{matching} of $G=(V,E)$ is a set $M\subseteq E$ such that no two edges in $M$ have a common endpoint. A matching $M$ \emph{saturates} a
vertex $v$, if $v$ is an endpoint of some edge in $M$.

\begin{theorem}[Hall's Theorem \cite{hall}]
Let $G$ be a bipartite graph with parts $X$ and $Y$. $G$ has a matching that saturates every vertex in $X$ if and only if for all $S\subseteq X$, $|S|\leq |N(S)|$.
\end{theorem}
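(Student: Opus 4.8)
The plan is to prove the two implications separately. The forward direction is immediate: if $M$ is a matching saturating every vertex of $X$, then the $M$-partners of the vertices in an arbitrary $S \subseteq X$ are $|S|$ distinct vertices of $Y$, all lying in $N(S)$, so $|N(S)| \geq |S|$. For the converse I would argue by induction on $|X|$, the case $|X| = 1$ being trivial since Hall's condition then says the unique vertex of $X$ has a neighbor.

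For the inductive step, assume $|X| \geq 2$ and that Hall's condition holds, and split into two cases according to whether the condition is ever tight on a nonempty proper subset. \textbf{Case 1 (surplus everywhere):} every nonempty $S \subsetneq X$ satisfies $|N(S)| \geq |S| + 1$. Choose any $x \in X$ and any neighbor $y$ of $x$, and pass to $G' = G - x - y$ with parts $X' = X \setminus \{x\}$ and $Y' = Y \setminus \{y\}$. For each $S \subseteq X'$ we have $|N_{G'}(S)| \geq |N_G(S)| - 1 \geq |S|$ (the case $S = X'$ using that $X'$ is a nonempty proper subset of $X$), so $G'$ satisfies Hall's condition; by induction it has a matching saturating $X'$, and adding $xy$ gives one saturating $X$.

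\textbf{Case 2 (some tight set):} there is a nonempty $S \subsetneq X$ with $|N(S)| = |S|$. Let $G_1$ be the subgraph of $G$ on $S \cup N(S)$; for $T \subseteq S$ we have $N_{G_1}(T) = N_G(T)$ since $N_G(T) \subseteq N_G(S)$, so Hall's condition holds in $G_1$, and by induction (as $|S| < |X|$) there is a matching $M_1$ saturating $S$, which is perfect between $S$ and $N(S)$ because $|N(S)| = |S|$. Let $G_2$ be the subgraph of $G$ on $(X \setminus S) \cup (Y \setminus N(S))$. For any $T \subseteq X \setminus S$,
\[
|N_{G_2}(T)| = |N_G(T) \setminus N(S)| = |N_G(T \cup S)| - |N(S)| \geq (|T| + |S|) - |S| = |T|,
\]
using $T \cap S = \emptyset$ and Hall's condition for $T \cup S$ in $G$. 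Hence $G_2$ satisfies Hall's condition, and since $|X \setminus S| < |X|$, induction gives a matching $M_2$ of $G_2$ saturating $X \setminus S$; then $M_1 \cup M_2$ saturates $X$.

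I expect the crux to be the inheritance step in Case 2 — showing that Hall's condition survives in the ``exterior'' graph $G_2$ — where the key idea is to enlarge $T$ by the tight set $S$ and combine $|N_G(T \cup S)| \geq |T| + |S|$ with $|N(S)| = |S|$; one must also be careful to pick $S$ nonempty and proper precisely so that both $|S|$ and $|X \setminus S|$ are strictly below $|X|$ and the induction applies to both pieces. An alternative route, which additionally yields a polynomial-time algorithm, is to start from any matching and, while some $x \in X$ is unsaturated, search for an $M$-augmenting path from $x$; if one is found, augment and repeat, and if none exists, the set $S$ of $X$-vertices reachable from $x$ along $M$-alternating paths has all of $N(S)$ matched back into $S$, forcing $|N(S)| < |S|$ and contradicting the hypothesis.
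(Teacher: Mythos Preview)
Your proof is correct --- it is the standard inductive argument for Hall's Theorem, and both cases are handled cleanly. However, the paper does not actually prove this statement: Hall's Theorem is quoted from \cite{hall} as a known result and used as a black box in the proof of Theorem~\ref{thm_hall}, so there is no proof in the paper to compare against.
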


\begin{theorem}
\label{thm_hall}
Let $G=(V,E)$ be a graph on $n$ vertices. Then, $z(G;i)\leq z(G;i+1)$ for $1\leq i < \frac{n}{2}$.
\end{theorem}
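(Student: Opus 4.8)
The plan is to apply Hall's Theorem to a suitable bipartite graph relating zero forcing sets of consecutive sizes. Let $X$ be the set of zero forcing sets of $G$ of size $i$ and $Y$ the set of zero forcing sets of size $i+1$, and form a bipartite graph $H$ with parts $X$ and $Y$ in which $S\in X$ is adjacent to $T\in Y$ exactly when $S\subseteq T$. A matching of $H$ saturating $X$ is precisely an injection from the size-$i$ zero forcing sets into the size-$(i+1)$ ones, which gives $z(G;i)=|X|\le|Y|=z(G;i+1)$. So it suffices to verify Hall's condition for $X$. (If $i<Z(G)$ then $X=\emptyset$ by Theorem \ref{thm_structural_zf}(1) and the inequality is trivial, so no degenerate case needs separate handling.)

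The key observation is that any superset of a zero forcing set is again a zero forcing set, since the closure operation is monotone. Hence every $S\in X$ is adjacent in $H$ to each of the $n-i$ sets $S\cup\{v\}$ with $v\in V\setminus S$, and these are the only size-$(i+1)$ supersets of $S$, so $\deg_H(S)=n-i$ for every $S\in X$. On the other side, a size-$(i+1)$ set has only $i+1$ subsets of size $i$, so $\deg_H(T)\le i+1$ for every $T\in Y$. Now fix $W\subseteq X$ and count edges of $H$ with one endpoint in $W$: summing over $W$ this count is exactly $|W|(n-i)$, while summing over $N_H(W)$ it is at most $|N_H(W)|(i+1)$, whence $|W|(n-i)\le|N_H(W)|(i+1)$.

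Finally, for $1\le i<\tfrac{n}{2}$ we have $2i\le n-1$, hence $n-i\ge i+1$, and the inequality above yields $|N_H(W)|\ge|W|$. By Hall's Theorem, $H$ has a matching saturating $X$, which completes the proof. I do not expect a serious obstacle here; the proof is short, and the only real design choices are picking the right auxiliary bipartite graph, orienting the matching so that it saturates the smaller (size-$i$) side, and exploiting the monotonicity of closure to pin down the degree on the $X$-side exactly at $n-i$, which is what makes the double-counting clean. An alternative route would be to build an explicit injection $X\to Y$, but that seems harder to do canonically, so Hall's Theorem is the cleaner tool.
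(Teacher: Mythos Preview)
Your proof is correct and follows essentially the same approach as the paper: build the bipartite inclusion graph between zero forcing sets of size $i$ and size $i+1$, use monotonicity of zero forcing to get degree exactly $n-i$ on the $X$-side and at most $i+1$ on the $Y$-side, and apply Hall's Theorem. The only cosmetic differences are that the paper takes $Y$ to be \emph{all} $(i+1)$-subsets (relying on the opening remark that supersets of zero forcing sets are zero forcing) and phrases the degree comparison as a pigeonhole contradiction rather than a direct double count; your choices are, if anything, slightly tidier.
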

\proof
For every zero forcing set $R$ of size $i$ and every $v\in V\backslash R$, $R\cup \{v\}$ is a zero forcing set of cardinality $i+1$. We will now show that to each zero forcing set of size $i$, $1\leq i < \frac{n}{2}$, we can associate a distinct zero forcing set of size $i+1$. Let $H$ be a bipartite graph with parts $X$ and $Y$, where the vertices of $X$ are zero forcing sets of $G$ of size $i$, and the vertices of $Y$ are all subsets of $V$ of size $i+1$; a vertex $x\in X$ is adjacent to a vertex $y\in Y$ in $H$ whenever $x\subseteq y$. For each $x\in X$, there are $n-i$ vertices not in $x$; thus, $d(x;H)=n-i$. Since a set of size $i+1$ has $i+1$ subsets of size $i$, it follows that for each $y\in Y$, $d(y;H)\leq i+1$. Suppose for contradiction that there exists a set $S\subseteq X$ such that $|S|>|N(S)|$. Since each vertex in $S$ has $n-i$ neighbors and since $|S|>|N(S)|$, by the Pigeonhole Principle, some vertex $v\in N(S)$ must have more than $n-i$ neighbors. Thus, $i+1\geq d(v;H)>n-i$, whence it follows that $i\geq \frac{n}{2}$; this contradicts the assumption that $i<\frac{n}{2}$. Thus, for every $S\subseteq X$, $|S|\leq |N(S)|$. By Theorem \ref{thm_hall}, $H$ has a matching that saturates all vertices of $X$. Thus, there are at least as many zero forcing sets of size $i+1$ as there are of size $i$, for $1\leq i < \frac{n}{2}$.
\qed
\medskip

A \emph{fort} of a graph $G$, as defined in \cite{caleb_thesis}, is a non-empty set $F\subset V$ such that no vertex outside $F$ is adjacent to exactly one vertex in $F$. Let $\mathcal{F}(G)$ be the set of all forts of $G$. In \cite{brimkov_caleb}, it was shown that $Z(G)$ is equal to the optimum of the following integer program:

\begin{eqnarray*}
\min&& \sum_{v\in V}s_v\\
\text{s.t.:} &&\sum_{v\in F}s_v\geq 1  \qquad\forall F\in \mathcal{F}(G)\\
&&s_v\in \{0,1\} \qquad\forall v\in V
\end{eqnarray*}

We now give a way to bound the number of constraints in this model using the zero forcing polynomial.

\begin{proposition}
Let $G$ be a graph of order $n$. Then, $|\mathcal{F}(G)|\leq 2^n-\mathcal{Z}(G;1)$.
\end{proposition}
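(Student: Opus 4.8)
The plan is to exhibit an injection from the set of forts of $G$ into a proper subset of $2^V$, or equivalently to identify at least $\mathcal{Z}(G;1)$ subsets of $V$ that are \emph{not} forts, since then $|\mathcal{F}(G)|\le 2^n-\mathcal{Z}(G;1)$. The natural candidates for non-forts are the zero forcing sets of size $1$: I claim no singleton zero forcing set is a fort. Recall from Theorem~\ref{thm_zf_polynomial_properties} that $\mathcal{Z}(G;1)=z(G;1)$ is $0$, $1$, or $2$, and is nonzero only when $G\simeq P_n$; so in the only nontrivial cases $G$ is a path and the singleton zero forcing sets are exactly the endpoints.

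First I would recall the definition of a fort: a non-empty $F\subset V$ such that no vertex outside $F$ has exactly one neighbor in $F$. Now suppose $\{v\}$ is a zero forcing set of $G$, so $G\simeq P_n$ (or $G\simeq P_1$) and $v$ is an endpoint of the path (or the lone vertex, which is vacuously handled since then $2^n-\mathcal{Z}(G;1)=2-1=1=|\mathcal{F}(P_1)|$, as $P_1$ has no forts — indeed $\{v\}$ would need a vertex outside it, and there is none, so it is not a fort; actually $\mathcal F(P_1)=\emptyset$). For $n\ge 2$ and $v$ an endpoint of $P_n$, the unique neighbor $u$ of $v$ lies outside $\{v\}$ and has exactly one neighbor, namely $v$, inside $\{v\}$; hence $\{v\}$ violates the fort condition and is not a fort. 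Thus each of the (at most two) singleton zero forcing sets is a non-fort, and since $\mathcal{F}(G)\subseteq 2^V$ avoids these $\mathcal{Z}(G;1)$ sets, we get $|\mathcal{F}(G)|\le 2^n-\mathcal{Z}(G;1)$.

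The only subtlety — and the one I'd be careful with — is making sure the count $\mathcal{Z}(G;1)$ of excluded sets is correct and that these sets are genuinely distinct non-forts in every case covered by Theorem~\ref{thm_zf_polynomial_properties}: when $\mathcal{Z}(G;1)=0$ the bound is the trivial $|\mathcal F(G)|\le 2^n$; when $\mathcal{Z}(G;1)=1$ (i.e.\ $G\simeq P_1$) one checks $\mathcal F(P_1)=\emptyset$ directly; and when $\mathcal{Z}(G;1)=2$ (i.e.\ $G\simeq P_n$, $n\ge2$) the two endpoints give two distinct singleton non-forts. I don't anticipate a real obstacle here — the argument is essentially a one-line observation that a degree-one vertex adjacent to a singleton zero forcing set certifies that singleton is not a fort — but the bookkeeping across the cases of $\mathcal{Z}(G;1)$ is the place to be precise.
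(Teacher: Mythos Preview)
Your argument rests on a misreading of the notation. The quantity $\mathcal{Z}(G;1)$ is not the coefficient $z(G;1)$; it is the zero forcing polynomial $\mathcal{Z}(G;x)=\sum_{i=1}^n z(G;i)x^i$ evaluated at $x=1$, i.e.\ $\mathcal{Z}(G;1)=\sum_{i=1}^n z(G;i)$, the \emph{total} number of zero forcing sets of $G$. So the proposition is asserting that the number of forts plus the number of zero forcing sets is at most $2^n$ --- a far stronger claim than the one you addressed, which only excludes at most two singletons from the fort count.

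To prove the actual statement you need to exhibit $\mathcal{Z}(G;1)$ distinct subsets of $V$ that are guaranteed not to be forts. The key fact (cited in the paper from \cite{caleb_thesis}) is that every zero forcing set intersects every fort; equivalently, the \emph{complement} of a zero forcing set is disjoint from at least one fort-witnessing condition and hence cannot itself be a fort. Since distinct zero forcing sets have distinct complements, this produces $\mathcal{Z}(G;1)$ distinct non-forts, giving $|\mathcal{F}(G)|\le 2^n-\mathcal{Z}(G;1)$. Your singleton argument is not salvageable for the correct reading of the statement: you must work with complements of all zero forcing sets, not with the zero forcing sets of size $1$.
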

\proof
$\mathcal{Z}(G;1)=\sum_{i=1}^nz(G;i)$ equals the number of zero forcing sets of $G$, and hence also the number of complements of zero forcing sets of $G$. By \cite[Theorem 5.2]{caleb_thesis}, a zero forcing set must intersect every fort. Thus, the complement of a zero forcing set cannot be a fort, and so the number of complements of zero forcing sets of $G$ is at most the number of sets of $G$ which are not forts, i.e., $2^n-|\mathcal{F}(G)|$. Thus, $\mathcal{Z}(G;1)\leq 2^n-|\mathcal{F}(G)|$, and the result follows.
\qed

\medskip

Next, we again use forts to bound the coefficients of the zero forcing polynomials of certain graphs.

\begin{proposition}
\label{prop_forts_coeffs}
Let $G$ be a graph of order n which has a fort $F$ with $|F|\leq Z(G)+1$. Then, $z(G;i)\leq {n\choose i}-{n-i-1\choose i}$ for $1\leq i\leq n$.
\end{proposition}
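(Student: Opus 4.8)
The plan is to bound from below the number of $i$-element subsets of $V$ that are \emph{not} zero forcing sets, and to observe that the claimed upper bound on $z(G;i)$ is precisely $\binom{n}{i}$ minus this lower bound. The key ingredient is already available in the excerpt: by \cite[Theorem 5.2]{caleb_thesis}, every zero forcing set of $G$ intersects every fort of $G$, and in particular intersects $F$. Hence no subset of $V\backslash F$ is a zero forcing set, and there are $\binom{n-|F|}{i}$ such subsets of size $i$; therefore $z(G;i)\le \binom{n}{i}-\binom{n-|F|}{i}$ for every $i$ with $1\le i\le n$.

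To finish, I would compare $\binom{n-|F|}{i}$ with $\binom{n-i-1}{i}$, splitting on whether $i<Z(G)$ or $i\ge Z(G)$. When $i<Z(G)$, Theorem \ref{thm_structural_zf}(1) gives $z(G;i)=0$, while Proposition \ref{prop_path} shows $\binom{n}{i}-\binom{n-i-1}{i}=z(P_n;i)\ge 0$, so the desired inequality holds trivially. When $i\ge Z(G)$, the hypothesis $|F|\le Z(G)+1$ yields $|F|\le i+1$, so $n-|F|\ge n-i-1$; since the binomial coefficient is nondecreasing in its upper argument (and equals $0$ when that argument is smaller than $i$, by the stated convention), we get $\binom{n-|F|}{i}\ge\binom{n-i-1}{i}$, and combining with the bound above gives $z(G;i)\le\binom{n}{i}-\binom{n-i-1}{i}$.

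I do not expect any serious obstacle here; the argument is short once the fort--zero-forcing-set incidence is invoked. The only points that need a moment's care are the edge cases in the binomial comparison (checking that the convention $\binom{a}{b}=0$ for $a<b$ keeps the monotonicity step valid, e.g.\ when $i>\lfloor (n-1)/2\rfloor$) and noting that the case $i<Z(G)$ is genuinely needed, since there $|F|$ may exceed $i+1$ and the plain bound $z(G;i)\le\binom{n}{i}-\binom{n-|F|}{i}$ is not by itself strong enough.
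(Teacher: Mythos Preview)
Your proposal is correct and follows essentially the same approach as the paper: bound the number of non-forcing $i$-sets from below by $\binom{n-|F|}{i}$ using the fort intersection property, then use $|F|\le Z(G)+1\le i+1$ to compare with $\binom{n-i-1}{i}$. The only cosmetic difference is that the paper disposes of $i>n/2$ up front (where $\binom{n-i-1}{i}=0$) rather than folding it into the monotonicity step, and leaves the case $i<Z(G)$ implicit, whereas you address it explicitly.
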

\proof

For $i>n/2$, clearly $z(G;i)\leq {n\choose i}-{n-i-1\choose i}={n\choose i}$; thus, we will assume henceforth that $i\in \{Z(G),\ldots,n/2\}$. Let $f=|F|$ and let $z'(G;i)$ denote the number of non-forcing sets of $G$ of size $i$. There are $\binom{n-f}{i}$ subsets of $V(G)\backslash F$ of size $i$, each of which is a non-forcing set of $G$. Thus, $z'(G;i) \geq \binom{n-f}{i}$. Since $f\leq Z(G)+1$ and $i\geq Z(G)$, it follows that $i+1\geq f$ and hence $n-f\geq n-(i+1)$. Thus, $z'(G;i)\geq \binom{n-f}{i}\geq \binom{n-(i+1)}{i}$, and so
\begin{equation*}
z(G;i)\leq \binom{n}i-z'(G;i)\leq \binom{n}i-\binom{n-i-1}{i}.\tag*{\qed}
\end{equation*}
\medskip

We conclude this section by showing that the bound from Proposition \ref{prop_forts_coeffs} also holds for Hamiltonian graphs. A \emph{Hamiltonian path} is a path which visits every vertex of a graph exactly once.

\begin{proposition}
\label{prop_bound}
Let $G$ be a graph of order $n$ which has a Hamiltonian path. Then, $z(G;i)\leq {n\choose i}-{n-i-1\choose i}$ for $1\leq i\leq n$, and this bound is sharp.
\end{proposition}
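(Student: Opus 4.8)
The plan is to reduce the Hamiltonian-path case to the path case $P_n$ that was already handled in Proposition~\ref{prop_path}. Let $v_1 v_2 \cdots v_n$ be a Hamiltonian path in $G$; this spanning subgraph is isomorphic to $P_n$. The key observation is that every zero forcing set of $G$ is also a zero forcing set of the spanning path $P_n$: if a set $S$ fails to force in $P_n$, then, as computed in the proof of Proposition~\ref{prop_path}, $S$ omits both endpoints of the path and contains no two consecutive vertices along the path. I would argue that such a set cannot force in $G$ either, even with the extra edges of $G$ present. The reason is that a non-forcing set of $P_n$ is exactly the complement of a set that is ``spread out'' along the path, and in fact the complement $V \setminus S$ of such a set is a \emph{fort} of $P_n$; I would check directly that $V\setminus S$ remains a fort of $G$.

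More precisely, the first step is: if $S$ is non-forcing in $P_n$, then $F := V \setminus S$ is non-empty (it contains the two path-endpoints), and no vertex of $S$ has exactly one neighbor in $F$ \emph{within the path} — because $S$ contains no two consecutive path-vertices, every vertex of $S$ lies strictly between two uncolored vertices (or is past an endpoint), but since the endpoints are uncolored, each colored vertex actually has its two path-neighbors both uncolored, or... here I need to be a little careful and instead invoke the characterization directly: the non-forcing sets of $P_n$ are precisely those whose complement is a union of ``gaps'' each of size $\geq 1$ separating singleton colored vertices, with gaps at both ends; one verifies that in this configuration every colored vertex has both of its $P_n$-neighbors uncolored. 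Hence no colored vertex forces in $P_n$. But adding the edges of $G$ only gives colored vertices \emph{more} neighbors, all of which (if uncolored) block forcing and (if colored) are irrelevant. So no colored vertex can force in $G$, and since $F$ is a fixed point of the color-change rule in $G$, $S$ is non-forcing in $G$ as well.

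Therefore every zero forcing set of $G$ is a zero forcing set of $P_n$, so $z(G;i) \leq z(P_n;i) = \binom{n}{i} - \binom{n-i-1}{i}$ by Proposition~\ref{prop_path}, which is the claimed bound. Sharpness is immediate by taking $G = P_n$ itself, for which the bound holds with equality by Proposition~\ref{prop_path}; one could also note $G = C_n$ gives another Hamiltonian example where the inequality is strict but the bound is still attained for $i \geq \lfloor n/2\rfloor + 1$.

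The main obstacle I expect is making the first step fully rigorous: precisely identifying that a set which is non-forcing in the spanning path has \emph{every} colored vertex with both path-neighbors uncolored (so that the extra edges of $G$ genuinely cannot help), rather than merely ``each colored vertex has some uncolored neighbor.'' The cleanest route is to phrase it via forts — show $V\setminus S$ is a fort of $P_n$ and that a fort of a spanning subgraph is a fort of the supergraph — since a colored vertex adjacent (in $G$) to a fort $F$ is adjacent to it in $P_n$ too only if the edge lies in $P_n$, so I must instead check directly that no vertex outside $F$ has exactly one $G$-neighbor in $F$; but a vertex outside $F$ that had exactly one $G$-neighbor in $F$ would, a fortiori, have at most one $P_n$-neighbor in $F$, and combined with $F$ being a fort of $P_n$ (no such vertex has exactly one $P_n$-neighbor in $F$) this forces it to have zero $P_n$-neighbors in $F$ and exactly one $G$-neighbor in $F$ via a non-path edge — which is possible in general, so the fort argument needs the specific structure of non-forcing sets of $P_n$, namely that such sets have \emph{no} colored vertex adjacent in $G$\dots. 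Here the honest fix is to use that in a non-forcing configuration of $P_n$ the colored vertices form an independent set in $P_n$ with both endpoints uncolored, hence the closure is unchanged in $P_n$, and then re-run the color-change rule in $G$ from scratch: the same set $S$ stays stuck because any first force in $G$ would require a colored vertex with a unique uncolored $G$-neighbor, impossible since each colored vertex has $\geq 2$ uncolored neighbors already along the path alone. That last sentence is the crux, and I would make sure it is stated carefully.
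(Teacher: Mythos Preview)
Your proposal is correct and, once you strip away the detour through forts, is essentially identical to the paper's own argument: a non-forcing set of $P_n$ contains no end-vertex and no two consecutive vertices, so every colored vertex has both of its path-neighbors uncolored, and since those edges are present in $G$ no colored vertex can perform a first force in $G$ either; hence non-forcing sets of $P_n$ yield non-forcing sets of $G$, giving $z(G;i)\leq z(P_n;i)$ with equality at $G=P_n$. The fort discussion is unnecessary --- the paper goes straight to your ``crux'' sentence, and that is all that is needed.
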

\begin{proof}
Label the vertices of $G$ in increasing order along a Hamiltonian path as $v_1,\ldots,v_n$. Label the vertices of a path $P_n$ in order from one end-vertex to the other as $v_1',\ldots,v_n'$. Let $S'$ be a non-forcing set of $P_n$. We claim that $S=\{v_i:v_i'\in S'\}$ is a non-forcing set of $G$. It was shown in Proposition \ref{prop_path} that the non-forcing sets of $P_n$ are precisely those which do not contain two adjacent vertices, nor an end-vertex of $P_n$. Thus, every vertex $v_i'\in S'$ has two uncolored neighbors, $v_{i-1}'$ and $v_{i+1}'$. Then, by construction, every vertex in $S$ has at least two uncolored neighbors, namely $v_{i-1}$ and $v_{i+1}$. Thus, no vertex in $S$ will be able to perform a force, so $S$ is not a zero forcing set of $G$. If $z'(H;i)$ denotes the number of non-forcing sets of size $i$ of a graph $H$, it follows that $z'(G;i)\geq z'(P_n;i)$. Thus, $z(G;i)={n\choose i}-z'(G;i)\leq {n\choose i}-z'(P_n;i)=z(P_n;i)={n\choose i}-{n-i-1\choose i}$. By Proposition \ref{prop_path}, the bound holds with equality for the path $P_n$.
\end{proof}

\subsection{Recognizing graphs by their zero forcing polynomials}

In this section, we identify several families of graphs which can be recognized by their zero forcing polynomials. As shown in Theorem \ref{thm_structural_zf}, it does not hold in general that if $\mathcal{Z}(G;x)=\mathcal{Z}(H;x)$, then $G\simeq H$. For example, complete multipartite graphs are generally not recognizable by their zero forcing polynomials. Wheels are also not recognizable by their zero forcing polynomials: for example, let $G$ be the graph obtained by subdividing an edge of $K_4$; then, $\mathcal{Z}(G;x)=8x^3+5x^4+x^5=\mathcal{Z}(W_5;x)$. On the other hand, it is easy to see from Theorem~\ref{thm_zf_polynomial_properties} that paths and complete graphs are recognizable by their zero forcing polynomials; we state this formally below.

\begin{proposition}
Let $G$ be a graph on $n\geq 1$ vertices. 
\begin{enumerate}
\item $\mathcal{Z}(G;x)=\mathcal{Z}(P_n;x)$ if and only if $G\simeq P_n$.
\item $\mathcal{Z}(G;x)=\mathcal{Z}(K_n;x)$ if and only if $G\simeq K_n$.
\end{enumerate}
\end{proposition}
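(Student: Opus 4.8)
The plan is to verify each direction using only the extremal coefficient data supplied by Theorem~\ref{thm_zf_polynomial_properties}, since the zero forcing polynomial records $z(G;n)$, $z(G;n-1)$, $z(G;n-2)$, and $z(G;1)$ as its coefficients, and these alone will pin down $P_n$ and $K_n$. The backward directions are immediate: if $G\simeq P_n$ then $\mathcal{Z}(G;x)=\mathcal{Z}(P_n;x)$ by definition of isomorphism (zero forcing sets are preserved), and likewise for $K_n$, so the content is entirely in the forward implications.

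For part (1), suppose $\mathcal{Z}(G;x)=\mathcal{Z}(P_n;x)$. First, comparing degrees forces $G$ to have $n$ vertices. If $n=1$ the claim is trivial, so assume $n\geq 2$. By Proposition~\ref{prop_path}, the coefficient of $x^1$ in $\mathcal{Z}(P_n;x)$ is $\binom{n}{1}-\binom{n-2}{1}=n-(n-2)=2$ (for $n\geq 2$; one checks $n=2$ directly). Hence $z(G;1)=2$, and by item~4 of Theorem~\ref{thm_zf_polynomial_properties} the only graphs with $z(G;1)=2$ are paths $P_n$ with $n\geq 2$. Therefore $G\simeq P_n$.

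For part (2), suppose $\mathcal{Z}(G;x)=\mathcal{Z}(K_n;x)=x^n+nx^{n-1}$ (Proposition~\ref{prop_clique}, for $n\geq 2$; the case $n=1$ is trivial since $\mathcal{Z}(K_1;x)=x$ and a one-vertex graph is $K_1$). Comparing degrees gives $|V(G)|=n$. The coefficient of $x^{n-1}$ is $n$, so by item~2 of Theorem~\ref{thm_zf_polynomial_properties}, all $n$ vertices of $G$ are non-isolated. The coefficient of $x^{n-2}$ is $0$, so by item~3 of Theorem~\ref{thm_zf_polynomial_properties}, for every pair of distinct vertices $u,v$ we must have $N(u)\backslash\{v\}=N(v)\backslash\{u\}$ (the other conditions in that item already being satisfied). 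I claim this, together with non-isolation, forces $G\simeq K_n$: take any $u\neq v$; since $u$ is non-isolated it has a neighbor $w$, and if $w\neq v$ then $w\in N(u)\backslash\{v\}=N(v)\backslash\{u\}$, so $w\sim v$; applying this symmetrically and chasing a short argument shows every vertex is adjacent to every other. Concretely, if some pair $u\not\sim v$ existed, pick a neighbor $w$ of $u$ (so $w\neq v$); then $w\in N(v)$, and iterating shows $u$ and $v$ share all neighbors outside $\{u,v\}$ yet $u\notin N(v)$ and $v\notin N(u)$, which is consistent, so I instead argue directly: the condition $N(u)\setminus\{v\}=N(v)\setminus\{u\}$ for all pairs means the relation ``$x=y$ or $x\sim y$'' is transitive, hence $G$ is a disjoint union of cliques; non-isolation plus the single-component constraint (all vertices pairwise satisfying the neighbor condition with a common neighbor structure) forces one clique, i.e. $G\simeq K_n$.

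I expect the main obstacle to be the clean deduction in part~(2) that the $x^{n-2}$-coefficient vanishing implies $G\simeq K_n$: the condition from item~3 of Theorem~\ref{thm_zf_polynomial_properties} is stated with several clauses, and one must carefully argue that, given all vertices are non-isolated, the failure of $N(u)\backslash\{v\}\neq N(v)\backslash\{u\}$ for \emph{every} pair (equivalently $N(u)\setminus\{v\}=N(v)\setminus\{u\}$ always) forces the graph to be a disjoint union of cliques, and then rule out more than one component. The path case is routine once one invokes item~4. I would present part~(1) in two lines, then spend the bulk of the proof on the structural deduction in part~(2), likely phrasing it as: the neighbor-equality condition makes ``being equal or adjacent'' an equivalence relation, so $G$ is a disjoint union of complete graphs, and since $G$ has no isolated vertices and (from the $x^{n-1}$ and $x^n$ data matching $K_n$ exactly, with no room for a second component contributing a factor) $G$ must be connected, we get $G\simeq K_n$.
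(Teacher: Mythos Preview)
Your approach is exactly what the paper intends: it simply states that the result ``is easy to see from Theorem~\ref{thm_zf_polynomial_properties}'' and gives no further argument, so deducing everything from items~2, 3, and~4 of that theorem is the right plan. Part~(1) is clean and complete.

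Part~(2) is correct in substance but the write-up wanders: you start one argument, note it is ``consistent'' and abandon it, then restart with the equivalence-relation idea, and finally appeal vaguely to ``no room for a second component.'' The clean finish is already implicit in what you wrote. Once you know $G$ is a disjoint union of cliques with no isolated vertices, apply the hypothesis $N(u)\setminus\{v\}=N(v)\setminus\{u\}$ to a pair $u,v$ lying in \emph{different} components: since $v\notin N(u)$ and $u\notin N(v)$, this reads $N(u)=N(v)$, but $N(u)$ and $N(v)$ are nonempty (no isolates) and disjoint (different components), a contradiction. Hence $G$ is a single clique, i.e.\ $G\simeq K_n$. State it that way and drop the intermediate false starts.
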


We will now identify another nontrivial family of graphs which can be recognized by its zero forcing polynomials. Let $G=(V,E)\not\simeq P_n$ be a graph and $v$ be a vertex of degree at least $3$. A \emph{pendent path attached to} $v$ is a maximal set $P\subset V$ such that $G[P]$ is a connected component of $G-v$ which is a path, one of whose ends is adjacent to $v$ in $G$. The vertex $v$ will be called the \emph{base} of the path. A \emph{chord} is an edge joining two nonadjacent vertices in a cycle. A \emph{chorded cycle} is a cycle with added chords. Any cycle on $n$ vertices with a single chord will be denoted $C_n+e$. We will say two vertices in a chorded cycle are \emph{consecutive} if they are adjacent in the graph induced by the cycle minus the chords. Two pendent paths attached to a cycle are \emph{consecutive} if their bases in the cycle are consecutive. A graph $G=(V,E)$ is a \emph{graph of two parallel paths specified by $V_1$ and $V_2$} if $G\not\simeq P_n$, and if $V$ can be partitioned into nonempty sets $V_1$ and $V_2$ such that $P:=G[V_1]$ and $Q:=G[V_2]$ are paths, and such that $G$ can be drawn in the plane in such a way that $P$ and $Q$ are parallel line segments, and the edges between $P$ and $Q$ (drawn as straight line segments) do not cross. Note that if $G$ is a graph of two parallel paths, there may be several different partitions of $V$ into $V_1$ and $V_2$ which satisfy the conditions above. For example, let $G=(\{1,2,3,4,5\}$, $\{\{1,2\}$, $\{2,3\}$, $\{3,4\},\{4,5\},\{5,1\}\})$ be a cycle on 5 vertices. Then $G$ is a graph of two parallel paths that can be specified by $V_1=\{1\}$ and $V_2=\{2,3,4,5\}$, as well as by $V_1=\{1,2,3\}$ and $V_2=\{4,5\}$. Graphs of two parallel paths were introduced by Johnson et al. \cite{JLS09} in relation to graphs with maximum nullity 2. They were also used by Row \cite{row} in the following characterization.

\begin{theorem}[\cite{row}]
\label{zf2}
$Z(G)=2$ if and only if $G$ is a graph of two parallel paths.
\end{theorem}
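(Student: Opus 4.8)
The plan is to prove the two directions of the equivalence separately. In each direction the ``$Z(G)\ge 2$'' half is essentially free: a graph of two parallel paths has at least two vertices and, by definition, is not a path, and (cf.\ the proof of Theorem~\ref{thm_zf_polynomial_properties}) the only graphs $H$ with $Z(H)=1$ are the paths $P_n$. So for ``$\Leftarrow$'' it remains to show $Z(G)\le 2$, and for ``$\Rightarrow$'' it remains to produce the parallel-paths structure from a minimum zero forcing set.

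For ``$\Rightarrow$'', suppose $Z(G)=2$ and fix a minimum zero forcing set $\{u,v\}$ together with a chronological list of forces. First I would note that each vertex performs at most one force (if $u$ forced $w$ and later $w'$, then $w'$ was a second uncolored neighbor of $u$ at the first force), so the forcing chains partition $V$ into two chains, one starting at $u$ with vertex set $V_1$ and one starting at $v$ with vertex set $V_2$, both nonempty. Each forcing chain is an \emph{induced} path: if two vertices $v_s,v_{s'}$ of the same chain with $s'>s+1$ were adjacent, then $v_{s'}$ would be a second uncolored neighbor of $v_s$ at the moment $v_s$ forces $v_{s+1}$. Write the chains in forcing order as $p_1,\dots,p_a$ and $q_1,\dots,q_b$, and let $t(x)$ denote the step at which $x$ is forced, with $t(p_1)=t(q_1)=0$; these are strictly increasing along each chain. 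The key claim is that there is no ``crossing'', i.e.\ no edges $p_iq_j$ and $p_kq_l$ with $i<k$ and $j>l$. Indeed, when $p_i$ forces $p_{i+1}$ its neighbor $q_j$ is already colored, so $t(q_j)<t(p_{i+1})$; symmetrically $p_kq_l\in E$ gives $t(p_k)<t(q_{l+1})$; and monotonicity gives $t(p_{i+1})\le t(p_k)$ and $t(q_{l+1})\le t(q_j)$, so $t(q_j)<t(q_j)$, a contradiction. Drawing $p_1,\dots,p_a$ left to right on one line and $q_1,\dots,q_b$ left to right on a parallel line, a short coordinate check shows this no-crossing condition is exactly what makes the straight-line cross edges pairwise noncrossing, while the path edges lie along the two parallel segments; hence $G$ is a graph of two parallel paths.

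For ``$\Leftarrow$'', let $G$ be a graph of two parallel paths specified by $V_1,V_2$, with $P=p_1\cdots p_a$ and $Q=q_1\cdots q_b$ drawn along parallel segments in noncrossing fashion, so that combinatorially $p_iq_j,p_kq_l\in E$ and $i<k$ imply $j\le l$. I would show that $\{p_1,q_1\}$ is a zero forcing set, which gives $Z(G)\le 2$, by maintaining the invariant that the colored set is always a ``prefix pair'' $\{p_1,\dots,p_i\}\cup\{q_1,\dots,q_j\}$ and arguing that, while this set is not all of $V$, some vertex forces the next vertex of one of the two paths. If $i=a$ and $j<b$, then every cross-neighbor of $q_j$ is colored, so $q_j$'s only uncolored neighbor is $q_{j+1}$ and it forces it (symmetrically if $j=b$, $i<a$). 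If $i<a$ and $j<b$, suppose neither $p_i$ can force $p_{i+1}$ nor $q_j$ can force $q_{j+1}$; then $p_i$ has an uncolored cross-neighbor $q_l$ with $l>j$ and $q_j$ has an uncolored cross-neighbor $p_k$ with $k>i$, and $p_iq_l,\,p_kq_j$ form a crossing, contradiction. Since a prefix-extending force is always available, we never need to perform a force that colors some $q_l$ with $l>j+1$ and breaks the invariant, so the colored set grows to all of $V$.

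I expect the main obstacle to be the case analysis and bookkeeping in the ``$\Leftarrow$'' induction: verifying carefully that a prefix-extending force is genuinely always available (so the invariant is never forced to break), and dispatching the degenerate cases $a=1$, $b=1$, and the case of no cross edges at all, where $G\simeq P\dot\cup Q$ and $Z(G)=2$ follows from multiplicativity (Proposition~\ref{prop_multiplicative}). The other delicate point is the geometric translation in ``$\Rightarrow$'': checking precisely that a planar straight-line drawing with $P$ and $Q$ as parallel segments exists if and only if the vertices of the two paths admit linear orderings with no edges $p_iq_j,p_kq_l$ satisfying $i<k$ and $j>l$, including the freedom in choosing which end of each path is ``leftmost''. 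The timing inequality in ``$\Rightarrow$'' is the slickest ingredient, and I would lead with it.
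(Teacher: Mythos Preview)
The paper does not prove this statement at all: Theorem~\ref{zf2} is quoted from \cite{row} and used as a black box in the proof of Theorem~\ref{thm: cycle polynomial distinct}. There is therefore nothing to compare your argument against in this paper.

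That said, your outline is essentially the standard proof and is correct. A couple of small points worth tightening if you write it out in full. In the ``$\Rightarrow$'' direction, your timing contradiction uses that $p_i$ actually performs a force (so $i<a$) and that $q_l$ actually performs a force (so $l<b$); both follow automatically from $i<k\le a$ and $l<j\le b$, but it is worth saying so. In the ``$\Leftarrow$'' direction, the phrase ``we never need to perform a force that \ldots\ breaks the invariant'' is the right thought but slightly backwards as stated: the point is that you \emph{choose} the chronological list, and since at every stage a prefix-extending force exists, you simply always pick one. You never have to argue that other forces are unavailable. Finally, the combinatorial-to-geometric translation (no pair $p_iq_j,p_kq_l$ with $i<k$, $j>l$ $\iff$ a noncrossing straight-line drawing on two parallel lines) is routine but does need a sentence: place $p_s$ at $(s,0)$ and $q_t$ at $(t,1)$ and observe two segments $p_iq_j$ and $p_kq_l$ cross iff $(i-k)(j-l)<0$.
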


\begin{theorem}
\label{thm: cycle polynomial distinct}
Let $G$ be a graph on $n\geq 3$ vertices and let $\phi(n,x)=\sum_{i=2}^n({n\choose i}-\frac{n}{i}{n-i-1\choose i-1})x^i$. $\mathcal{Z}(G;x)=\phi(n,x)$ if and only if $G$ is one of the following graphs: $C_n$, $C_n+e$, $P_2\dot\cup P_2$, $(P_4\dot\cup K_1)\lor K_1$.
\end{theorem}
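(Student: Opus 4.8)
The statement has two directions. The easy direction is to verify that each of the four listed graphs has zero forcing polynomial $\phi(n,x)$. For $C_n$ this is exactly Proposition \ref{prop_cycle}. For $C_n+e$, $P_2\dot\cup P_2$, and $(P_4\dot\cup K_1)\lor K_1$ I would first observe these graphs all have the appropriate order ($n$ arbitrary $\geq 3$ for $C_n+e$; $n=4$ for the last two) and that $Z=2$ for each, so the constant and linear coefficients vanish; then I would compute the coefficients directly. For $P_2\dot\cup P_2$ use Propositions \ref{prop_multiplicative} and \ref{prop_path}: $\mathcal{Z}(P_2;x)=2x+x^2$, so $\mathcal{Z}(P_2\dot\cup P_2;x)=(2x+x^2)^2 = 4x^2+4x^3+x^4$, and check $\phi(4,x)=4x^2+4x^3+x^4$. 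For $(P_4\dot\cup K_1)\lor K_1$ I would use that the apex vertex is dominating, hence (as in the wheel argument) a zero forcing set either contains it — giving $z(P_4\dot\cup K_1;i-1)$ sets — or does not, in which case the set must induce a subgraph that forces the apex first, which pins down the possibilities; then sum. For $C_n+e$ I would count non-forcing sets directly: a set is non-forcing iff it induces no forces, and I would argue the chord does not change which sets are non-forcing compared to $C_n$ (any set that was forcing in $C_n$ stays forcing, and the extra chord endpoints only add forcing power), so $z(C_n+e;i)=z(C_n;i)$.

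**The converse direction** is the substantive part. Suppose $\mathcal{Z}(G;x)=\phi(n,x)$. Reading off coefficients of $\phi$: the coefficient of $x^0$ and $x^1$ are $0$, so $Z(G)\geq 2$; the coefficient of $x^2$ is $\binom n2 - \frac n2\binom{n-3}{1} = \binom n2 - \frac{n(n-3)}{2} = n$, which is nonzero, so $Z(G)=2$. By Theorem \ref{zf2} (Row's theorem), $G$ is a graph of two parallel paths. Also, reading the top coefficients via Theorem \ref{thm_zf_polynomial_properties}: $z(G;n)=1$ always; $z(G;n-1)$ equals the number of non-isolated vertices, and comparing with the coefficient of $x^{n-1}$ in $\phi$, which is $n$, forces $G$ to have no isolated vertices when $n\geq 3$ — wait, this is where care is needed, since $(P_4\dot\cup K_1)\lor K_1$ and $P_2\dot\cup P_2$ have $n=4$ and I should check the coefficient of $x^{3}$ in $\phi(4,x)$ is indeed $4$, the number of non-isolated vertices in each. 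More usefully, I would extract $z(G;n-2)$ from $\phi$ and use part 3 of Theorem \ref{thm_zf_polynomial_properties}: $z(G;n-2)$ counts pairs $\{u,v\}$ of non-isolated vertices with $N(u)\setminus\{v\}\neq N(v)\setminus\{u\}$, and this gives strong structural constraints (it limits how many "twin-like" pairs $G$ can have). The plan is to combine the parallel-paths structure with these coefficient constraints. Since $G$ has two parallel paths, it is determined by the two path lengths and the "rungs" between them; I would enumerate the possible shapes and, for all but the four listed (plus isomorphic re-descriptions), show some coefficient of $\mathcal{Z}(G;x)$ disagrees with $\phi(n,x)$ — most cheaply by looking at $z(G;2)$ (the number of minimum zero forcing sets, which for $\phi$ equals $n$) and at $z(G;3)$ or $z(G;n-2)$.

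**Main obstacle.** The hard part is the case analysis over graphs of two parallel paths: this family is infinite and somewhat loosely defined (multiple partitions, variable rung patterns), so I need a clean invariant to cut it down. My intended lever is the pair $\big(n,\, z(G;2)\big)$ together with $z(G;n-2)$: requiring exactly $n$ minimum zero forcing sets is very restrictive. For $C_n$ one checks $z(C_n;2)=\binom n2 - \frac n2 (n-3)/1$... actually $z(C_n;2) = \binom n2 - n = $ number of non-adjacent pairs, and I want to show that among two-parallel-path graphs, having $z(G;2)$ match $\phi$'s $x^2$-coefficient $n$ already forces $G\in\{C_n, C_n+e\}$ for large $n$, and that the small-$n$ exceptions $P_2\dot\cup P_2$ and $(P_4\dot\cup K_1)\lor K_1$ arise only at $n=4$. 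So the proof skeleton is: (1) coefficient extraction pins $Z(G)=2$ and $n=|V(G)|$; (2) Row's theorem puts $G$ among two-parallel-path graphs; (3) a counting lemma for $z(G;2)$ on such graphs, showing the number of size-$2$ zero forcing sets is minimized/controlled and equals $n$ only in the claimed cases; (4) for those cases, verify the full polynomial matches, handling the $n=4$ coincidences separately. I expect step (3) — getting a usable formula or tight bound for $z(G;2)$ on an arbitrary graph of two parallel paths, and arguing equality forces a cycle or chorded cycle — to be the crux, and it will likely need its own lemma characterizing exactly which pairs of vertices form a minimum zero forcing set in a two-parallel-path graph.
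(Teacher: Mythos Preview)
Your high-level plan matches the paper's: read off $Z(G)=2$ from the $x^2$-coefficient, invoke Row's theorem to place $G$ among graphs of two parallel paths, and then run a case analysis comparing coefficients. But there are several concrete problems.

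\textbf{A factual slip.} The graph $(P_4\dot\cup K_1)\lor K_1$ has six vertices, not four; it is the $n=6$ exception, while $P_2\dot\cup P_2$ is the $n=4$ exception. Your statement that both ``arise only at $n=4$'' is wrong, and your apex-vertex computation would target the wrong $\phi$.

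\textbf{The $C_n+e$ argument is not sound as stated.} You write that ``the extra chord endpoints only add forcing power,'' but adding an edge can destroy forcing (the new edge gives its endpoints an extra uncolored neighbour). The paper's Claim~1 argues both inclusions explicitly: any zero forcing set of either $C_n$ or $C_n+e$ must contain two consecutive vertices, and from two consecutive vertices one can force around the cycle, with the chord endpoints handled once both are coloured. You need both directions.

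\textbf{The main gap: one lever is not enough.} Your converse plan leans almost entirely on $z(G;2)$ (and possibly $z(G;n-2)$). The paper uses two independent levers and genuinely needs both:
\begin{itemize}
\item[(i)] comparing $z(G;2)$ with $z(C_n;2)=n$;
\item[(ii)] exhibiting a non-forcing set of size $>n/2$ (or, when $n$ is even, three non-forcing sets of size $n/2$), which contradicts Claim~\ref{lem: cycle zfs} for $C_n$.
\end{itemize}
Lever (ii) is what disposes of chorded cycles with a single pendent path, with two consecutive pendent paths of length $\geq 2$, and several other subcases where $z(G;2)$ alone does not separate $G$ from $C_n$. Your proposed substitutes $z(G;3)$ and $z(G;n-2)$ are not obviously adequate here: for instance, $z(G;n-2)$ via Theorem~\ref{thm_zf_polynomial_properties}(3) only detects twin pairs, and most two-parallel-path graphs have none.

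\textbf{Missing structure.} The paper's case analysis is organised around the observation that a connected two-parallel-path graph with at least two cross edges is a (possibly chorded) cycle $\widetilde{C}_m$ with at most four pendent paths attached at distinct vertices. This decomposition, together with Claims~\ref{lem: size 2 consecutives} and~\ref{lem: nonadjacent pendents} describing which size-two sets can force, is what makes the case split tractable. Your outline does not identify this normal form, and without it ``enumerate the possible shapes'' is open-ended.
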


\begin{proof}
By Proposition \ref{prop_cycle}, $\mathcal{Z}(C_n;x)=\phi(n,x)$. It suffices to show that $C_n$, $C_n + e$, $P_2\dot\cup P_2$ (for $n=4$), and $\mathscr{G}:=(P_4\dot\cup K_1)\lor K_1$ (for $n=6$) have the same zero forcing polynomial, and that this polynomial is distinct from the zero forcing polynomials of all other graphs $G$ with $Z(G)=2$. It can readily be verified that $\mathcal{Z}(P_2\dot\cup P_2;x)=\mathcal{Z}(C_4;x)$ and $\mathcal{Z}(\mathscr{G};x)=\mathcal{Z}(C_6;x)$. Now we will show that $\mathcal{Z}(C_n;x)=\mathcal{Z}(C_n+e;x)$.

\begin{claim} 
For $n\geq 4$, $\mathcal{Z}(C_n;x)=\mathcal{Z}(C_n+e;x)$. 
\end{claim}
\proof
Suppose $S$ is a zero forcing set of $C_n$ or $C_n+e$ which does not contain two consecutive vertices. Then, every vertex in $S$ has at least two uncolored neighbors (its two consecutive vertices), so no vertex in $S$ can force -- a contradiction. Thus, any zero forcing set of $C_n$ or $C_n+e$ contains two consecutive vertices.

Let $S$ be any zero forcing set of $C_n$. In $C_n+e$, each of the two consecutive vertices in $S$ can initiate a forcing chain (possibly of length zero) that terminates at an endpoint of the chord $e$. Then, when both endpoints of the chord have been colored, forcing can continue and color the rest of $C_n+e$. Thus, $S$ is also a zero forcing set of $C_n+e$. Similarly, if $S$ is a zero forcing set of $C_n+e$, $S$ contains two consecutive vertices, and thus $S$ is also a zero forcing set of $C_n$. Since the zero forcing sets of $C_n$ and $C_n+e$ are identical, it follows that $\mathcal{Z}(C_n;x)=\mathcal{Z}(C_n+e;x)$.
\qed
\medskip

It remains to show that $\mathcal{Z}(C_n;x)$ is distinct from the zero forcing polynomials of all other graphs with zero forcing number 2. By Theorem \ref{zf2}, graphs with zero forcing number 2 are graphs of two parallel paths. Let $G$ be a graph of two parallel paths with $n\geq 3$ vertices, different from $C_n$, $C_n+e$, $P_2\dot\cup P_2$, and $\mathscr{G}$. We will show that $\mathcal{Z}(G;x)$ and $\mathcal{Z}(C_n;x)$ differ in at least one coefficient. First we make a claim which follows from Proposition \ref{prop_cycle}.

\begin{claim}
\label{lem: cycle zfs}
For any $n$, $z(C_n;2)=n$. If $i > \frac{n}{2}$, every subset of $i$ vertices of $C_n$ is a zero forcing set. If $n$ is even, $C_n$ has exactly two non-forcing sets of size $\frac{n}{2}$.
\end{claim}

\begin{claim}
If there are no edges between the two parallel paths of $G$, then $\mathcal{Z}(G;x)\neq\mathcal{Z}(C_n;x)$. 
\end{claim}
\proof
$G$ consists of two disjoint paths, and there are at most four zero forcing sets of size $2$ in $G$, since any zero forcing set of size two must contain a vertex of degree at most 1 from each path. By Claim \ref{lem: cycle zfs}, if $n>4$, $\mathcal{Z}(C_n;x)\neq \mathcal{Z}(G;x)$. If $3\leq n\leq 4$, $G$ is either $P_2\dot\cup P_2$, $P_1\dot{\cup}P_3$, or $P_1\dot{\cup}P_2$. We have already noted that $\mathcal{Z}(C_4;x)=\mathcal{Z}(P_2\dot\cup P_2;x)$, and it can be readily verified that $\mathcal{Z}(C_4;x)\neq\mathcal{Z}(P_1\dot\cup P_3;x)$ and $\mathcal{Z}(C_3;x)\neq\mathcal{Z}(P_1\dot\cup P_2;x)$.
\qed
\begin{claim}
\label{claim single edge}
If there is a single edge between the two parallel paths of $G$, then $\mathcal{Z}(G;x)\neq\mathcal{Z}(C_n;x)$. 
\end{claim}
\proof
Let $P$ and $Q$ be the two parallel paths and $e=uv$ be the edge between them, with $u\in V(P)$ and $v\in V(Q)$. If $d(u)=d(v)=3$, then  $V(P)\cup \{v\}$ and $V(Q)\cup \{u\}$ are non-forcing sets, and one of them must contain more than $\frac{n}{2}$ vertices; by Claim \ref{lem: cycle zfs}, it follows that $\mathcal{Z}(G;x)\neq\mathcal{Z}(C_n;x)$. 

If one of $u$ and $v$, say $v$, does not have degree $3$, then since $G$ is not a single path, it must be that $d(u)=3$ and all other vertices have degree $1$ or $2$. Every zero forcing set of $G$ of size $2$ consists of either two degree $1$ vertices, or a degree $1$ vertex $y$ and a degree $2$ vertex adjacent to $u$ that is not between $u$ and $y$. This implies that there are $9-2\ell$ zero forcing sets of size $2$, where $\ell\in \{1,2,3\}$ is the number of neighbors of $u$ of degree $1$. By Claim \ref{lem: cycle zfs}, we can assume $n=9-2\ell$, since otherwise $z(C_n;2)\neq z(G;2)$. Then $\ell\neq 3$, since a graph on $3$ vertices cannot have a vertex of degree $3$. Figure \ref{figure: all worrysome spiders} shows all possible graphs with the desired properties. For each of these graphs, the filled-in vertices form a non-forcing set of size greater than $\frac{n}{2}$. Thus, by Claim \ref{lem: cycle zfs}, none of these graphs have the same zero forcing polynomial as $C_n$.
\qed

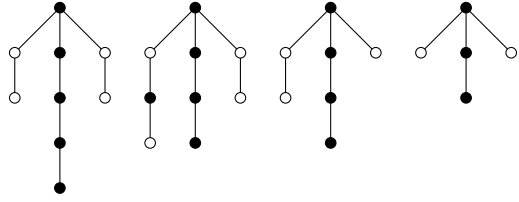
\begin{figure}[ht]
	\centering
	\begin{tikzpicture}[line cap=round,line join=round,>=triangle 45,x=0.6cm,y=0.6cm]
	
	\draw (-1,2)-- (-1,3);
	\draw (-1,3)-- (0,4);
	\draw (0,0)-- (0,1);
	\draw (0,1)-- (0,2);
	\draw (0,2)-- (0,3);
	\draw (0,3)-- (0,4);
	\draw (1,2)-- (1,3);
	\draw (1,3)-- (0,4);
	
	\draw (2,2)-- (2,3);
	\draw (2,3)-- (3,4);
	\draw (2,1)-- (2,2);
	\draw (3,1)-- (3,2);
	\draw (3,2)-- (3,3);
	\draw (3,3)-- (3,4);
	\draw (4,3)-- (3,4);
	\draw (4,2)-- (4,3);
	
	\draw (5,2)-- (5,3);
	\draw (5,3)-- (6,4);
	\draw (6,2)-- (6,3);
	\draw (6,3)-- (6,4);
	\draw (7,3)-- (6,4);
	\draw (6,1)-- (6,2);
	
	\draw (8,3)-- (9,4);
	\draw (9,2)-- (9,3);
	\draw (9,3)-- (9,4);
	\draw (10,3)-- (9,4);

	\draw [fill=black] (0,0) circle (2.0pt);
	\draw [fill=black] (0,1) circle (2.0pt);
	\draw [fill=black] (0,2) circle (2.0pt);
	\draw [fill=black] (0,3) circle (2.0pt);
	\draw [fill=black] (0,4) circle (2.0pt);
	\draw [fill=white] (-1,3) circle (2.0pt);
	\draw [fill=white] (-1,2) circle (2.0pt);
	\draw [fill=white] (1,3) circle (2.0pt);
	\draw [fill=white] (1,2) circle (2.0pt);
	
	\draw [fill=white] (2,1) circle (2.0pt);
	\draw [fill=black] (2,2) circle (2.0pt);
	\draw [fill=white] (2,3) circle (2.0pt);
	\draw [fill=black] (3,1) circle (2.0pt);
	\draw [fill=black] (3,2) circle (2.0pt);
	\draw [fill=black] (3,3) circle (2.0pt);
	\draw [fill=black] (3,4) circle (2.0pt);
	\draw [fill=white] (4,2) circle (2.0pt);
	\draw [fill=white] (4,3) circle (2.0pt);
	
	\draw [fill=white] (5,2) circle (2.0pt);
	\draw [fill=white] (5,3) circle (2.0pt);
	\draw [fill=black] (6,1) circle (2.0pt);
	\draw [fill=black] (6,2) circle (2.0pt);
	\draw [fill=black] (6,3) circle (2.0pt);
	\draw [fill=black] (6,4) circle (2.0pt);
	\draw [fill=white] (7,3) circle (2.0pt);
	
	\draw [fill=white] (8,3) circle (2.0pt);
	\draw [fill=black] (9,2) circle (2.0pt);
	\draw [fill=black] (9,3) circle (2.0pt);
	\draw [fill=black] (9,4) circle (2.0pt);
	\draw [fill=white] (10,3) circle (2.0pt);
	\end{tikzpicture}
	\caption{\label{figure: all worrysome spiders} All connected graphs with a single vertex $u$ of maximum degree 3, and $n=9-2\ell$ total vertices, where $\ell$ is the number of leaves adjacent to $u$. For each graph, non-forcing sets of size greater than $\frac{n}{2}$ are indicated by filled-in vertices.}
\end{figure}

\begin{claim}
\label{claim_spiders}
If $G$ has two pendent paths attached to the same vertex, then $\mathcal{Z}(G;x)\neq\mathcal{Z}(C_n;x)$. 
\end{claim}
\proof
By Claim \ref{claim single edge}, we can assume there are at least two edges between the parallel paths of $G$. Let $P$ and $Q$ be the two pendent paths which are attached to the same vertex $v$; then, $P\cup\{v\}\cup Q$ is one of the parallel paths of $G$, so $v$ is adjacent to at least two vertices of the other parallel path $G-(P\cup \{v\}\cup Q)$. Then, the sets $V(P)\cup V(Q)\cup\{v\}$ and $V(G)\setminus(V(P)\cup V(Q))$ are both non-forcing sets whose union is $V(G)$ and whose intersection is $\{v\}$. Thus, one of these sets must be of size greater than  $\frac{n}{2}$; by Claim \ref{lem: cycle zfs}, $\mathcal{Z}(G;x)\neq\mathcal{Z}(C_n;x)$. 
\qed
\medskip


We will assume henceforth that $G$ has at least two edges between the two parallel paths, and that any two pendent paths have distinct bases.
Thus, $G$ consists of a (possibly) chorded cycle $\widetilde{C}_m$ with up to four pendent paths; the vertices of $\widetilde{C}_m$ will be denoted $v_1,\dots,v_m$, where in $G[V(\widetilde{C}_m)]$, $v_i\sim v_{i+1}$ for $1\leq i\leq m-1$ and $v_1\sim v_m$. 
We will refer to a pendent path with base vertex $v_j\in V(\widetilde{C}_m)$ as $P_{v_j}$. 
We will denote by $\widetilde{C}_m(v_i,v_j)$ the section around the cycle $\widetilde{C}_m$ in clockwise orientation from $v_i$ to $v_j$, but not including $v_i$ and $v_j$, and $\widetilde{C}_m[v_i,v_j]=\widetilde{C}_m(v_i,v_j)\cup\{v_1,v_j\}$. Given two chords $e_1$ and $e_2$, we will say $v_i$ and $v_j$ are distinct endpoints of these chords if $v_i\neq v_j$ and $v_i,v_j\not\in e_1\cap e_2$. We now make two claims which follow easily from the definition of a zero forcing set. 

\begin{claim}
\label{lem: size 2 consecutives}
A zero forcing set of size 2 of a (possibly) chorded cycle with pendent paths consists either of two consecutive vertices, or of the degree 1 endpoints of consecutive pendent paths, or of the degree 1 endpoint of a pendent path and a vertex which is consecutive to the base of that pendent path.
\end{claim}

\begin{claim}
\label{lem: nonadjacent pendents}
If a (possibly) chorded cycle has two non-consecutive pendent paths, then no pair of consecutive vertices forms a zero forcing set. 
\end{claim}

\noindent Next we consider several cases based on the number of pendent paths in $G$. 

\medskip
\noindent
{\bf Case 1}. $G$ has no pendent paths.
\medskip

If $G$ has no pendent paths then $G$ is a chorded cycle, so by Claim \ref{lem: size 2 consecutives}, there are at most $n$ zero forcing sets of size $2$, namely the pairs of consecutive vertices along the cycle. Since $G$ is different from $C_n$ and $C_n+e$ and has a Hamilton cycle, we can assume $G$ is a graph of two parallel paths $P$ and $Q$ with at least four edges between the paths. This implies that one of the parallel paths, say $P$, contains at least two vertices of degree at least $3$. Let $u$ and $v$ be two vertices of degree at least $3$ in $P$ such that all vertices between $u$ and $v$ in $P$ have degree $2$. If $u\sim v$, then $\{u,v\}$ is not a zero forcing set since both $u$ and $v$ have two uncolored neighbors. Now suppose there is at least one vertex $x$ of degree $2$ with $x\sim u$ between $u$ and $v$ in $P$; see Figure \ref{figure: consecutives between two chords}. Then $\{x,u\}$ is not a zero forcing set since once $u$ forces along $P$ to $v$, no further forces can happen since both $u$ and $v$ will have at least two uncolored neighbors. In either case, $G$ has less than $n$ zero forcing sets of size $2$, so $z(G;2)<z(C_n;2)$.

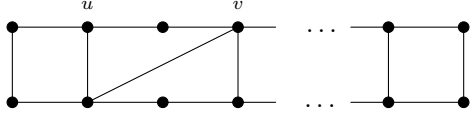
\begin{figure}[ht]
	\centering
	\begin{tikzpicture}[line cap=round,line join=round,>=triangle 45,x=1.0cm,y=1.0cm]
	\draw (1.,3.)-- (2.,3.);
	\draw (2.,3.)-- (3.,3.);
	\draw (1.,3.)-- (1.,2.);
	\draw (1.,2.)-- (2.,2.);
	\draw (2.,2.)-- (3.,2.);
	\draw (6.,3.)-- (6.,2.);
	\draw (3.,3.)-- (4.,3.);
	\draw (3.,2.)-- (4.,2.);
	\draw (2.,3.)-- (2.,2.);
	\draw (4.,3.)-- (4.,2.);
	\draw (4.,3.)-- (4.5,3.);
	\draw (4.,2.)-- (4.5,2.);
	\draw (6.,3.)-- (5.5,3.);
	\draw (6.,2.)-- (5.5,2.);
	\draw (4.78,3.1) node[anchor=north west] {$\dots$};
	\draw (4.76,2.1) node[anchor=north west] {$\dots$};
	\draw (2.,2.)-- (4.,3.);
	\draw (6.,3.)-- (7.,3.);
	\draw (7.,3.)-- (7.,2.);
	\draw (7.,2.)-- (6.,2.);
	\begin{scriptsize}
	\draw [fill=black] (1.,3.) circle (2.0pt);
	\draw [fill=black] (2.,3.) circle (2.0pt);
	\draw[color=black] (2,3.3) node {$u$};
	\draw [fill=black] (3.,3.) circle (2.0pt);
	\draw [fill=black] (1.,2.) circle (2.0pt);
	\draw [fill=black] (2.,2.) circle (2.0pt);
	\draw [fill=black] (3.,2.) circle (2.0pt);
	\draw [fill=black] (6.,3.) circle (2.0pt);
	\draw [fill=black] (6.,2.) circle (2.0pt);
	\draw [fill=black] (4.,3.) circle (2.0pt);
	\draw[color=black] (4,3.3) node {$v$};
	\draw [fill=black] (4.,2.) circle (2.0pt);
	\draw [fill=black] (7.,3.) circle (2.0pt);
	\draw [fill=black] (7.,2.) circle (2.0pt);
	\end{scriptsize}
	\end{tikzpicture}
	\caption{\label{figure: consecutives between two chords} Consecutive vertices between two chords.}
\end{figure}

\medskip
\noindent
\textbf{Case 2.} $G$ has one pendent path.
\medskip

Let $P_{v_1}=u_1,u_2, \dots , u_l$ be the pendent path. Then, $G= \widetilde{C}_m +P_{v_1}$ together with the edge $v_1u_l$, and $|V(G)|=m+l=n$. By Claim \ref{lem: cycle zfs}, it suffices to show that $G$ contains a set of vertices of size $i > \frac{n}{2}$ that is not a zero forcing set or at least three sets of size $\frac{n}2$ that are not forcing.

\medskip
\noindent
\textbf{Subcase 1.} Suppose either $l=1$ and $n$ is odd, or $l \geq 2$. 
\medskip

If $n$ is even, let $i= n/2 +1$. If $n$ is odd, let $i= (n+1)/2$. Choose a set of colored vertices of size $i$ as follows: color every vertex of $P_{v_1}$, color $v_1$, and then proceed around the cycle $\widetilde{C}_{m}$ starting from $v_1$, coloring every other vertex until $i$ vertices have been colored. Then, the colored vertices in $\widetilde{C}_m$ are all mutually nonconsecutive, and every colored vertex in $G$ has either zero or at least two uncolored neighbors. Thus, we have found a non-forcing set of size $i>\frac{n}{2}$ and $z(G;i)\neq z(C_n;i)$.

\medskip
\noindent
\textbf{Subcase 2.}  Suppose $l=1$ and $n$ is even.
\medskip

If $n\geq 8$, it follows that $m\geq 7$. Consider the sets consisting of $v_1$, $u_1$, and $\frac{m-3}2$ non-consecutive vertices from $\{v_3,\dots,v_{m-1}\}$; there are at least 3 ways to choose $\frac{m-3}2$ non-consecutive vertices for $m\geq 7$, so we have found 3 non-forcing sets of size $2+\frac{m-3}2=\frac{n}2$, as desired. Now, assume $n=6$; Figure \ref{figure: all chorded cycles on five vertices} shows all possible graphs with the desired properties. One of these graphs is $\mathscr{G}$; we have already noted that $\mathcal{Z}(C_6;x)=\mathcal{Z}(\mathscr{G};x)$. If $G$ is any of the other six graphs in Figure \ref{figure: all chorded cycles on five vertices}, it can be readily verified $\mathcal{Z}(C_6;x)\neq\mathcal{Z}(G;x)$. Finally, if $n=4$, then $G$ is a triangle with a pendent vertex, and it can be verified that $z(G;2)>z(C_4;2)$.

\begin{figure}[ht]
	\centering
	\begin{tikzpicture}[line cap=round,line join=round,>=triangle 45,x=0.69cm,y=0.69cm]
	\draw (0,0)-- (.5,.5);
	\draw (.5,.5)-- (1,1);
	\draw (1,1)-- (2,1);
	\draw (2,1)-- (2,0);
	\draw (2,0)-- (1,0);
	\draw (1,0)-- (.5,.5);
	\draw (1,1)-- (2,0);
	\draw (1,1)-- (1,0);
	
	\draw (3,0)-- (3.5,.5);
	\draw (3.5,.5)-- (4,1);
	\draw (4,1)-- (5,1);
	\draw (5,1)-- (5,0);
	\draw (5,0)-- (4,0);
	\draw (4,0)-- (3.5,.5);
	\draw (5,1)-- (3.5,.5);
	\draw (5,1)-- (4,0);
	
	\draw (6,0)-- (6.5,.5);
	\draw (6.5,.5)-- (7,1);
	\draw (7,1)-- (8,1);
	\draw (8,1)-- (8,0);
	\draw (8,0)-- (7,0);
	\draw (7,0)-- (6.5,.5);
	\draw (6.5,.5)-- (8,1);
	\draw (6.5,.5)-- (8,0);
	
	\draw (-3,0)-- (-2.5,.5);
	\draw (-2.5,.5)-- (-2,1);
	\draw (-2,1)-- (-1,1);
	\draw (-1,1)-- (-1,0);
	\draw (-1,0)-- (-2,0);
	\draw (-2,0)-- (-2.5,.5);
	\draw (-1,1)-- (-2.5,.5);
	
	\draw (-6,0)-- (-5.5,.5);
	\draw (-5.5,.5)-- (-5,1);
	\draw (-5,1)-- (-4,1);
	\draw (-4,1)-- (-4,0);
	\draw (-4,0)-- (-5,0);
	\draw (-5,0)-- (-5.5,.5);
	\draw (-4,0)-- (-5,1);
	
	\draw (-9,0)-- (-8.5,.5);
	\draw (-8.5,.5)-- (-8,1);
	\draw (-8,1)-- (-7,1);
	\draw (-7,1)-- (-7,0);
	\draw (-7,0)-- (-8,0);
	\draw (-8,0)-- (-8.5,.5);
	\draw (-8,0)-- (-8,1);
	
	\draw (-12,0)-- (-11.5,.5);
	\draw (-11.5,.5)-- (-11,1);
	\draw (-11,1)-- (-10,1);
	\draw (-10,1)-- (-10,0);
	\draw (-10,0)-- (-11,0);
	\draw (-11,0)-- (-11.5,.5);

	\draw [fill=black] (0,0) circle (2.0pt);
	\draw [fill=black] (1,0) circle (2.0pt);
	\draw [fill=black] (2,0) circle (2.0pt);
	\draw [fill=black] (3,0) circle (2.0pt);
	\draw [fill=black] (4,0) circle (2.0pt);
	\draw [fill=black] (5,0) circle (2.0pt);
	\draw [fill=black] (6,0) circle (2.0pt);
	\draw [fill=black] (7,0) circle (2.0pt);
	\draw [fill=black] (8,0) circle (2.0pt);
	\draw [fill=black] (-1,0) circle (2.0pt);
	\draw [fill=black] (-2,0) circle (2.0pt);
	\draw [fill=black] (-3,0) circle (2.0pt);
	\draw [fill=black] (-4,0) circle (2.0pt);
	\draw [fill=black] (-5,0) circle (2.0pt);
	\draw [fill=black] (-6,0) circle (2.0pt);
	\draw [fill=black] (-7,0) circle (2.0pt);
	\draw [fill=black] (-8,0) circle (2.0pt);
	\draw [fill=black] (-10,0) circle (2.0pt);
	\draw [fill=black] (-9,0) circle (2.0pt);
	\draw [fill=black] (-11,0) circle (2.0pt);
	\draw [fill=black] (-12,0) circle (2.0pt);
	\draw [fill=black] (-1,1) circle (2.0pt);
	\draw [fill=black] (-2,1) circle (2.0pt);
	\draw [fill=black] (-4,1) circle (2.0pt);
	\draw [fill=black] (-5,1) circle (2.0pt);
	\draw [fill=black] (-7,1) circle (2.0pt);
	\draw [fill=black] (-8,1) circle (2.0pt);
	\draw [fill=black] (-10,1) circle (2.0pt);
	\draw [fill=black] (-11,1) circle (2.0pt);
	\draw [fill=black] (.5,.5) circle (2.0pt);
	\draw [fill=black] (-2.5,.5) circle (2.0pt);
	\draw [fill=black] (-5.5,.5) circle (2.0pt);
	\draw [fill=black] (-8.5,.5) circle (2.0pt);
	\draw [fill=black] (-11.5,.5) circle (2.0pt);
	\draw [fill=black] (1,1) circle (2.0pt);
	\draw [fill=black] (2,1) circle (2.0pt);
	\draw [fill=black] (3.5,.5) circle (2.0pt);
	\draw [fill=black] (4,1) circle (2.0pt);
	\draw [fill=black] (5,1) circle (2.0pt);
	\draw [fill=black] (6.5,.5) circle (2.0pt);
	\draw [fill=black] (7,1) circle (2.0pt);
	\draw [fill=black] (8,1) circle (2.0pt);
	
	\draw[color=black] (9,0.5) node {$\leftarrow\mathscr{G}$};
	\end{tikzpicture}
	\caption{\label{figure: all chorded cycles on five vertices} All graphs of two parallel paths on six vertices with a single pendent path, of length 1.}
\end{figure}
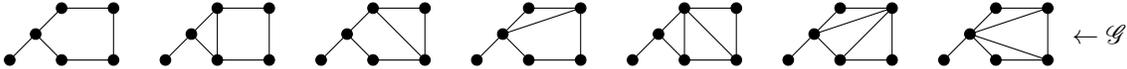

\medskip
\noindent
\textbf{Case 3.} $G$ has two pendent paths.
\medskip

If the two pendent paths are not consecutive, by Claim \ref{lem: nonadjacent pendents}, no pair of consecutive vertices can force the graph. Then, by Claim \ref{lem: size 2 consecutives}, the only possible zero forcing sets of size two contain the degree 1 endpoint of a pendent path $P_{v_i}$ and a vertex which is consecutive to its base $v_i$, or two degree 1 endpoints of consecutive pendent paths. Since $|\widetilde{C}_m| \geq 3$ and $G$ has two pendent paths, $n \geq 5$. Then, because the two pendent paths are not consecutive, there are at most $2 \cdot 2=4<n$ zero forcing sets of size 2, so  $z(G;2)<z(C_4;2)$.

Now assume the two pendent paths of $G$ are consecutive. let $P_{v_1}=u_1,u_2, \dots, u_{l_1}$ and $P_{v_2}=w_1,w_2, \dots, w_{l_2}$ be the pendent paths, with $u_{l_1}$ adjacent to $v_1$ and $w_{l_2}$ adjacent to $v_2$; see Figure \ref{figure: 2 consecutive pendents}. Without loss of generality, suppose $l_1 \geq l_2$.

\begin{figure}[ht]
	\centering
	\begin{tikzpicture}[line cap=round,line join=round,>=triangle 45,x=1.0cm,y=1.0cm]
	\draw (2.,5.)-- (2.5,5.);
	\draw (3.5,5.)-- (4.,5.);
	\draw (2.,4.)-- (2.5,4.);
	\draw (3.5,4.)-- (4.,4.);
	\draw (2.76,5.22) node[anchor=north west] {$\dots$};
	\draw (2.78,4.16) node[anchor=north west] {$\dots$};
	\draw (4.,5.)-- (5.,5.);
	\draw (4.,4.)-- (5.,4.);
	\draw (5.,5.)-- (5.,4.);
	\draw [shift={(5.75862068966,4.28793103448)}] plot[domain=0.628918118774:2.38783691482,variable=\t]({1.*1.04045545913*cos(\t r)+0.*1.04045545913*sin(\t r)},{0.*1.04045545913*cos(\t r)+1.*1.04045545913*sin(\t r)});
	\draw [shift={(5.77297297297,4.48243243243)}] plot[domain=3.69956313707:5.85005244369,variable=\t]({1.*0.911168628087*cos(\t r)+0.*0.911168628087*sin(\t r)},{0.*0.911168628087*cos(\t r)+1.*0.911168628087*sin(\t r)});
	\draw (6.5,4.94) node[anchor=north west] {$\vdots$};
	\draw (2.72,5.76) node[anchor=north west] {$P_{v_2}$};
	\draw (2.9,3.56) node[anchor=north west] {$P_{v_1}$};
	\draw (5.42,6.24) node[anchor=north west] {$\widetilde{C}_m$};
	\begin{scriptsize}
	\draw [fill=black] (2.,5.) circle (2.0pt);
	\draw[color=black] (2.12,5.2) node {$w_{1}$};
	\draw [fill=black] (4.,5.) circle (2.0pt);
	\draw[color=black] (4.14,5.2) node {$w_{l_2}$};
	\draw [fill=black] (2.,4.) circle (2.0pt);
	\draw[color=black] (2.02,3.7) node {$u_{1}$};
	\draw [fill=black] (4.,4.) circle (2.0pt);
	\draw[color=black] (4.16,3.7) node {$u_{l_1}$};
	\draw [fill=black] (5.,5.) circle (2.0pt);
	\draw[color=black] (5.18,5.38) node {$v_2$};
	\draw [fill=black] (5.,4.) circle (2.0pt);
	\draw[color=black] (5.18,3.6) node {$v_1$};
	\draw [fill=black] (6.,5.3) circle (2.0pt);
	\draw[color=black] (6.26,5.5) node {$v_{3}$};
	\draw [fill=black] (6.,3.6) circle (2.0pt);
	\draw[color=black] (6.26,3.2) node {$v_{m}$};
	\end{scriptsize}
	\end{tikzpicture}
	\caption{\label{figure: 2 consecutive pendents}$G$ has two pendent paths which are consecutive. }
\end{figure}
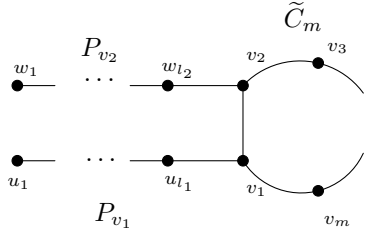

\medskip
\noindent
\textbf{Subcase 1.} Suppose $l_1 \geq 2$. 
\medskip

If $l_2 \geq 2$, let $S=V(P_{v_1}) \cup \{v_1,v_3, \dots, v_{p}\} \cup \{w_2\}$ where $p=m-1$ if $m$ is even and $p=m-2$ if $m$ is odd. It is easy to verify that $S$ is not a zero forcing set, and that $|S| \geq l_1 + \frac{m-1}{2} + 1 \geq \frac{l_1+l_2+m-1+2}{2} = \frac{n+1}{2}$. Since there is a non-forcing set of size greater than $\frac{n}{2}$, by  Claim \ref{lem: cycle zfs}, $\mathcal{Z}(G;x)\neq\mathcal{Z}(C_n;x)$. The case $l_1 > l_2 = 1$ is handled similarly. 

\medskip
\noindent
\textbf{Subcase 2.} Suppose $l_1=l_2=1$. 
\medskip

If $\widetilde{C}_m$ has no chords, then any pair of consecutive vertices on $\widetilde{C}_m$, except $\{v_1,v_2\}$, are a zero forcing set. In addition, $\{u_1, v_m\}$, $\{w_1, v_3\}$, $\{w_1,v_1\}$, and $\{u_1,v_2\}$ are all zero forcing sets. Thus, there are at least $n-2-1+4=n+1$ zero forcing sets of size 2, so $z(G;2)>z(C_n;2)$. Thus, we may assume $\widetilde{C}_m$ contains chords. Suppose there exists a chord $v_iv_j$, $i <j$, without $v_1$ or $v_2$ as an endpoint. By Claim \ref{lem: size 2 consecutives}, there are at most $m+5=n+3$ zero forcing sets of $G$ of size 2: $m$ pairs of consecutive vertices from $\widetilde{C}_m$, and the sets $\{u_1, v_m\}$, $\{w_1, v_3\}$, $\{w_1,v_1\}$, $\{u_1,v_2\}$, and  $\{u_1,w_1\}$. However, $\{v_1,v_2\}$ is not a forcing set, no consecutive pair of vertices in $\widetilde{C}_m[v_j,v_i]$ is a forcing set, and because of the chord, $\{w_1,v_3\}$ and $\{u_1,v_m\}$ are not forcing sets. It follows that $G$ has at most $n-2$ zero forcing sets of size 2, so $z(G;2)<z(C_n;2)$.

Now suppose every chord has $v_1$ or $v_2$ as an endpoint. Without loss of generality, let $v_1v_i$ be a chord such that $i$ is minimum. Clearly $i\geq 3$; suppose first that $i=3$. If $m$ is odd, the set $\{u_1,v_1\}\cup\{v_2,v_4,\ldots,v_{m-1}\}$ is a non-forcing set of size $\frac{n+1}{2}$. If $m$ is even, $\{u_1,v_1\}\cup\{v_2,v_4,\ldots,v_{m-2}\}$, $\{u_1,v_1,v_3,\ldots,v_{m-1}\}$, and $\{w_1,v_2,v_4,\ldots,v_m\}$ are non-forcing sets of size $\frac{n}{2}$. In either case, by Claim \ref{lem: cycle zfs}, $\mathcal{Z}(G;x)\neq\mathcal{Z}(C_n;x)$.
Now suppose that $i\geq 4$. It is easy to see that no pair of consecutive vertices $\{v_k,v_{k+1}\}$ in $\widetilde{C}_m[v_2,v_i]$ is a forcing set, and that $\{v_1,v_2\}$ and $\{v_1,w_1\}$ are not  forcing sets. Since $i\geq 4$, there are at least 2 pairs of consecutive vertices in $\widetilde{C}_m[v_2,v_i]$. Hence, together with $\{v_1,v_2\}$, there are at least 3 pairs of consecutive vertices in $\widetilde{C}_m$ that are not forcing sets; moreover, $\{v_1,w_1\}$ is a set consisting of a degree 1 endpoint of a pendent path and a vertex which is consecutive to the base of that pendent path. By Claim \ref{lem: size 2 consecutives}, $G$ has at most $m-3+(2\cdot 2-1)+1=m+1=n-1$ forcing sets of size 2, so $z(G;2)<z(C_n;2)$.

\medskip
\noindent
\textbf{Case 4.} $G$ has three pendent paths.
\medskip

First we will consider the case when the three pendent paths are all mutually consecutive. Let the paths be $P_{v_1}=u_1, \dots, u_{l_1}, P_{v_2}=w_1, \dots, w_{l_2}, P_{v_3}=x_1, \dots, x_{l_3}$ and $V(\widetilde{C}_m) = \{v_1,v_2,v_3\}$, where $v_i$ is the base of $P_{v_i}$, $1\leq i\leq 3$; see Figure \ref{figure: 3 consecutive pendents}. It is easy to see that $G$ has nine zero forcing sets of size 2: any pair of degree 1 vertices, and any degree 1 vertex in pendent path $P_{v_i}$ together with a neighbor of $v_i$ in $\widetilde{C}_m$. Since $C_n$ has $n$ zero forcing sets of size 2, it follows that if $n\neq 9$, $z(G;2)\neq z(C_n;2)$. Suppose henceforth that $n=9$; this implies that $l_1+l_2+l_3=6$. By Claim \ref{lem: cycle zfs}, any set of five vertices of $C_9$ is a zero forcing set. It can then be verified (similarly to Claim \ref{claim_spiders}) that each of the three nonisomorphic graphs resulting from the different possibilities for the values of $l_1,l_2,l_3$ contains a non-forcing set of size 5. Thus, $z(G;5)<z(C_9;5)$.

\begin{figure}[ht]
	\centering
	\begin{tikzpicture}[line cap=round,line join=round,>=triangle 45,x=1.0cm,y=1.0cm]
	\draw (5.,4.)-- (4.,3.);
	\draw (4.,3.)-- (6.,3.);
	\draw (6.,3.)-- (5.,4.);
	\draw (5.,4.)-- (5.,4.5);
	\draw (5.,6.)-- (5.,6.5);
	\draw (4.,3.)-- (3.5,3.);
	\draw (1.5,3.)-- (2.,3.);
	\draw (6.,3.)-- (6.5,3.);
	\draw (8.5,3.)-- (8.,3.);
	\draw (4.88,6.08) node[anchor=north west] {$\vdots$};
	\draw (2.2,3.1) node[anchor=north west] {$\dots$};
	\draw (7.2,3.1) node[anchor=north west] {$\dots$};
	\draw (7.06,3.88) node[anchor=north west] {$P_{v_3}$};
	\draw (2.34,3.8) node[anchor=north west] {$P_{v_1}$};
	\draw (5.44,5.7) node[anchor=north west] {$P_{v_2}$};
	\draw (4.78,3.76) node[anchor=north west] {$\widetilde{C}_m$};
	\draw (3.5,3.)-- (3.,3.);
	\draw (6.5,3.)-- (7.,3.);
	\draw (5.,4.5)-- (5.,5.);
	\begin{scriptsize}
	\draw [fill=black] (5.,4.) circle (2.0pt);
	\draw[color=black] (5.36,4.1) node {$v_2$};
	\draw [fill=black] (4.,3.) circle (2.0pt);
	\draw[color=black] (4.18,3.38) node {$v_1$};
	\draw [fill=black] (6.,3.) circle (2.0pt);
	\draw[color=black] (6.18,3.38) node {$v_3$};
	\draw [fill=black] (5.,4.5) circle (2.0pt);
	\draw[color=black] (5.34,4.7) node {$w_{l_2}$};
	\draw [fill=black] (5.,6.5) circle (2.0pt);
	\draw[color=black] (5.34,6.68) node {$w_1$};
	\draw [fill=black] (3.5,3.) circle (2.0pt);
	\draw[color=black] (3.7,3.38) node {$u_{l_1}$};
	\draw [fill=black] (1.5,3.) circle (2.0pt);
	\draw[color=black] (1.68,3.38) node {$u_1$};
	\draw [fill=black] (6.5,3.) circle (2.0pt);
	\draw[color=black] (6.7,3.38) node {$x_{l_3}$};
	\draw [fill=black] (8.5,3.) circle (2.0pt);
	\draw[color=black] (8.68,3.38) node {$x_1$};
	\end{scriptsize}
	\end{tikzpicture}
	\caption{\label{figure: 3 consecutive pendents} $G$ has three pendent paths which are mutually consecutive.}
\end{figure}
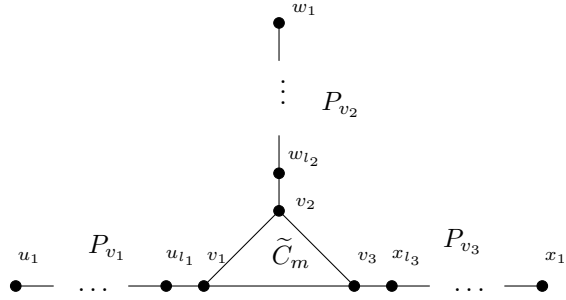

Now, we will assume that the three pendent paths are not mutually consecutive; for this to be possible, it must be that $m\geq 4$ and thus $n\geq 7$. Let $v'_1,v'_2,v'_3$ be vertices in $\{v_1,\ldots,v_m\}$ such that $P_{v'_1}$ and $P_{v'_3}$ are pendent paths with bases $v'_1$ and $v'_3$ that are not consecutive, and the base $v'_2$ of the third pendent path $P_{v'_2}$ lies in $\widetilde{C}_m(v'_1,v'_3)$. By Claim \ref{lem: nonadjacent pendents}, no pair of consecutive vertices in $\widetilde{C}_m$ can force $G$. Moreover, the degree one endpoint of $P_{v'_2}$ together with a consecutive vertex of $v'_2$ cannot force  $G$. Thus, there are at most four zero forcing sets of size 2 consisting of the degree 1 endpoint of $P_{v'_1}$ or $P_{v'_3}$ and a consecutive vertex of $v'_1$ or $v'_3$; there are also at most two zero forcing sets of size two consisting of the degree 1 endpoints of pendent paths that are consecutive. Thus, $G$ has at most $6<n$ zero forcing sets of size two, so $z(G;2)<z(C_n;2)$.

\medskip
\noindent
\textbf{Case 5.} $G$ has four pendent paths. 
\medskip

We have that $m \geq 4$ and $n\geq 8$ since $G$ has four pendent paths. Since no pendent paths are attached to the same vertex, at least one pair of the pendent paths must be non-adjacent; thus, by Claim \ref{lem: nonadjacent pendents}, no pair of consecutive vertices in $\widetilde{C}_m$ can force $G$. It is easy to see that no degree one endpoint of a pendent path $P$ together with a vertex consecutive to its base in $\widetilde{C}_m$ forms a zero forcing set of $G$. Thus, $G$ has at most $4<n$ zero forcing sets of size two: those consisting of degree 1 endpoints of pendent paths that are consecutive along the cycle. Thus, $z(G;2)<z(C_n;2)$.

\vspace{0.25cm}

We have shown that in all cases, $\mathcal{Z}(G;x)$ and $\mathcal{Z}(C_n;x)$ differ in at least one coefficient. This completes the proof of Theorem \ref{thm: cycle polynomial distinct}.
\end{proof}

\section{Conclusion}
\label{section_conclusion}

In this paper, we studied the enumeration problem associated with zero forcing by introducing the zero forcing polynomial of a graph. We characterized the extremal coefficients of $\mathcal{Z}(G;x)$, presented closed form expressions for the zero forcing polynomials of several families of graphs, and explored various structural properties of $\mathcal{Z}(G;x)$. 

We offer two conjectures regarding the coefficients of the zero forcing polynomial; evidence for these conjectures was shown in Theorem \ref{thm_hall} and Propositions \ref{prop_forts_coeffs} and \ref{prop_bound}, respectively (the conjectures also hold for all families of graphs whose zero forcing polynomials were characterized in Section \ref{section_characterizations}).

\begin{conjecture}
For any graph $G$, $\mathcal{Z}(G;x)$ is unimodal.
\end{conjecture}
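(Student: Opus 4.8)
\medskip

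\noindent By Theorem~\ref{thm_hall} the coefficient sequence $(z(G;1),\dots,z(G;n))$ is nondecreasing on the lower half, so unimodality can fail only through a strict local minimum $z(G;j-1)>z(G;j)<z(G;j+1)$ with $j>n/2$. The plan is therefore to rule out such a ``valley'' in the upper half $\{i:i\ge n/2\}$; one must bear in mind that when $Z(G)>n/2$ the entire support $\{Z(G),\dots,n\}$ of $\mathcal{Z}(G;x)$ lies in this range and Theorem~\ref{thm_hall} is vacuous there, so the upper half has to be handled from scratch. First I would observe that the up-set property of the family of zero forcing sets is not by itself enough --- there are up-sets in $2^{V}$ with non-unimodal rank profile --- so the argument must use structure specific to zero forcing. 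The structure I would rely on is the fort description: a set $S$ is a zero forcing set of $G$ if and only if $S$ meets every fort of $G$ (one direction being \cite[Theorem~5.2]{caleb_thesis}); equivalently, the complex of non-forcing sets is generated by the complements of the minimal forts, so that $z(G;i)=\binom{n}{i}-z'(G;i)$ with $z'(G;i)$ the number of $i$-subsets of $V$ that avoid some fort.

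Next I would reprise the proof of Theorem~\ref{thm_hall} in the deletion direction. Fix $i$ in the upper half and form the bipartite graph $H$ whose parts are the zero forcing sets of $G$ of sizes $i+1$ and $i$, with $T$ adjacent to $S$ whenever $S=T\setminus\{v\}$. Each forcing set $S$ of size $i$ is adjacent to all $n-i$ of its size-$(i+1)$ supersets, so $d(S;H)=n-i$; and by the fort description $T\setminus\{v\}$ is still forcing exactly when $v$ is not the unique element of $T$ lying in some fort, so $d(T;H)$ is the number of such ``removable'' vertices of $T$. A matching saturating the size-$(i+1)$ side would give $z(G;i+1)\le z(G;i)$. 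This certainly fails at the bottom of the support (when $i+1=Z(G)$, and for minimal forcing sets of size $Z(G)+1$), so the goal would be to establish it for all $i$ beyond some threshold $t$ and then to bridge the window $\lceil n/2\rceil\le i\le t$ using the inductive hypothesis that a decrease has already occurred at step $i$, which ought to force the size-$(i+1)$ forcing sets to carry enough removable vertices for Hall's condition at step $i+1$. At the endpoints I would argue directly: near $i=n$ the coefficients $z(G;n),z(G;n-1),z(G;n-2)$ are pinned down by Theorem~\ref{thm_zf_polynomial_properties}, and near $i=Z(G)$ one works with the minimal transversals of the fort hypergraph.

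The hard part will be exactly this bridging step. Certifying Hall's condition in the middle of the upper half requires quantitative control of the sizes and overlaps of \emph{all} forts of $G$, and no such control is available for an arbitrary graph, so the deletion matching can degrade badly there. A tempting shortcut is to prove instead that $\mathcal{Z}(G;x)$ is log-concave --- which, since $z(G;i)>0$ precisely for $i\ge Z(G)$, would imply unimodality at once --- but $\mathcal{Z}(P_4;x)=2x+6x^2+4x^3+x^4$ has a conjugate pair of nonreal roots, so $\mathcal{Z}(G;x)$ need not be real-rooted and the standard real-rootedness route to log-concavity is closed off. In the end, a structural handle on the forts of a graph strong enough to push the matching through the whole upper half would very likely also resolve the companion conjecture (the one for which Propositions~\ref{prop_forts_coeffs} and~\ref{prop_bound} give partial evidence, cf.\ the bound $z(G;i)\le\binom{n}{i}-\binom{n-i-1}{i}$), and that is where I would begin.
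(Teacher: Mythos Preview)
The statement you are attempting is listed in the paper as a \emph{conjecture} (Conjecture~1 in Section~\ref{section_conclusion}); the paper does not prove it. It is presented as an open problem, with Theorem~\ref{thm_hall} (monotonicity below $n/2$) and the explicit computations of Section~\ref{section_characterizations} offered only as supporting evidence. There is therefore no proof in the paper to compare your proposal against.

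Your write-up is candid about being a research outline rather than a proof, and you correctly locate the obstruction. The deletion-matching idea for the upper half is the natural dual to Theorem~\ref{thm_hall}, but Hall's condition on the size-$(i{+}1)$ side needs every collection of large forcing sets to shed enough removable vertices, and nothing in the fort characterization controls this uniformly for arbitrary $G$. The ``bridging'' step you propose --- leveraging a decrease $z(G;i-1)>z(G;i)$ to guarantee Hall's condition at level $i{+}1$ --- is never made precise, and it is not clear what mechanism would transmit information from the rank-$i$ count to the structure of individual rank-$(i{+}1)$ forcing sets. Your remark that $\mathcal{Z}(P_4;x)$ has nonreal roots is correct and does rule out real-rootedness as a route, though it does not by itself rule out log-concavity.

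In short: the paper leaves this open, and your proposal is a reasonable map of where the difficulty lies, but it is not a proof and should not be read as one.
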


\begin{conjecture}
For any graph $G$ on $n$ vertices, $z(G;i)\leq z(P_n;i)$, for $1\leq i\leq n$.
\end{conjecture}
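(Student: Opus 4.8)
\medskip
\noindent\textbf{A proof proposal.}
The plan is to recast the inequality in terms of non-forcing sets and then combine the fort-based bounds developed in this section with a suitable generalisation of the embedding used for Hamiltonian graphs. Write $z'(G;i)=\binom{n}{i}-z(G;i)$ for the number of $i$-element subsets of $V(G)$ that are \emph{not} zero forcing sets. By Proposition~\ref{prop_path}, $z(P_n;i)=\binom{n}{i}-\binom{n-i-1}{i}$, so the conjecture is equivalent to the statement that $z'(G;i)\ge\binom{n-i-1}{i}$ for every graph $G$ on $n$ vertices and every $i$. Since $\binom{n-i-1}{i}=0$ once $i\ge\lceil n/2\rceil$, it suffices to treat $1\le i<n/2$, and by Theorem~\ref{thm_structural_zf} we may also assume $i\ge Z(G)$: otherwise $z(G;i)=0$, so $z'(G;i)=\binom{n}{i}$ and there is nothing to prove.

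The first tool I would use is the observation from the proof of Proposition~\ref{prop_forts_coeffs}: if $F$ is any fort of $G$, then every $i$-subset of $V(G)\setminus F$ is non-forcing. Hence $z'(G;i)\ge\binom{n-\mu(G)}{i}$, where $\mu(G)$ denotes the minimum size of a fort of $G$, and more generally $z'(G;i)\ge\big|\bigcup_{F}\binom{V(G)\setminus F}{i}\big|$, the union ranging over all forts $F$. When $\mu(G)\le i+1$ this at once gives $\binom{n-\mu(G)}{i}\ge\binom{n-i-1}{i}$, which is Proposition~\ref{prop_forts_coeffs}; the remaining case is graphs all of whose forts have size at least $i+2$, so that a single fort-complement has at most $n-i-2$ vertices and is too small by itself. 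For that case I would try to generalise the embedding in the proof of Proposition~\ref{prop_bound}: instead of a spanning path, find a long path $v_1,\dots,v_k$ in $G$ together with a fixed vertex set $D$ disjoint from it, such that every $i$-set obtained by taking pairwise non-consecutive interior vertices of the path and filling in with vertices of $D$ is non-forcing (each of its vertices retaining zero or at least two uncolored neighbours, which stalls the closure); a lower bound on $k$ and $|D|$ in terms of $n$ and $i$ would then yield at least $\binom{n-i-1}{i}$ such sets.

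The main obstacle is exactly this last step: a dense or highly connected graph may have neither a small fort nor a long induced path, so neither ingredient alone is enough, and it is unclear how to interpolate; moreover the padding set $D$ must be chosen with care so that it never enables a force. The most plausible route I see is a density dichotomy. A graph that is dense enough has large $Z(G)$, which forces $i<Z(G)$, where the bound is vacuous; so the genuinely hard regime is graphs of intermediate density. Establishing a clean trichotomy for such graphs---``$i<Z(G)$, or $\mu(G)\le i+1$, or $G$ contains a path long enough for the embedding''---would complete the argument, and it is this structural statement that I do not currently know how to prove. As a consistency check, the bound can be verified directly against all the closed forms obtained in Section~\ref{section_characterizations}.
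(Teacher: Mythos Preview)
The statement you are attempting to prove is presented in the paper as an open \emph{conjecture}, not a theorem; the paper contains no proof of it. The only evidence the paper offers is precisely the two results you invoke---Proposition~\ref{prop_forts_coeffs} (the fort bound) and Proposition~\ref{prop_bound} (the Hamiltonian-path embedding)---together with the closed forms in Section~\ref{section_characterizations}. So your proposal is not an alternative to the paper's argument; it is an attempt to push past the paper's partial results toward a full proof.

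Your reformulation as $z'(G;i)\ge\binom{n-i-1}{i}$ is correct, as are the reductions to the range $Z(G)\le i<n/2$. The fort step is exactly Proposition~\ref{prop_forts_coeffs} and does settle the case $\mu(G)\le i+1$. The difficulty, as you yourself acknowledge, lies entirely in the remaining case, and your proposed trichotomy (``either $i<Z(G)$, or $\mu(G)\le i+1$, or $G$ contains a sufficiently long path for the embedding'') is not established and is not obviously true. There is no a priori reason a graph whose every fort has size at least $i+2$ must contain a path long enough to manufacture $\binom{n-i-1}{i}$ non-forcing sets via the mechanism of Proposition~\ref{prop_bound}; and your density heuristic only disposes of graphs with $\delta(G)\ge n/2$ (via $Z(G)\ge\delta(G)$), which is far from covering the intermediate regime. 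In short, the gap you flag is real and is exactly why the statement remains a conjecture in the paper; your outline does not close it.
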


Another direction for future work is to derive conditions which guarantee that a polynomial $P$ is or is not the zero forcing polynomial of some graph. In particular, it would be interesting to find other families of graphs which can be recognized by their zero forcing polynomials. It would also be interesting to characterize all zero forcing sets (or at least all minimum zero forcing sets) of some other nontrivial families of graphs such as trees and grids. For example, given a grid $P_m\Box P_n$, $m \leq n$, with the usual plane embedding and corresponding vertex coordinates, any path of order $m$ with one end-vertex at a corner of the grid and monotone coordinates of all other vertices is a minimum zero forcing set. A similar characterization of all other minimum zero forcing sets of grids could be pursued.

A graph polynomial $f(G;x)$ satisfies a \emph{linear recurrence relation} if $f(G;x)=\sum_{i=1}^k g_i(x)f(G_i;x)$, where the $G_i$'s are obtained from $G$ using vertex or edge elimination operations, and the $g_i$'s are fixed rational functions. For example, the chromatic polynomial $P(G;x)$ satisfies the deletion-contraction recurrence $P(G;x)=P(G-e;x)-P(G/e;x)$. Similarly, a \emph{splitting formula} for a graph polynomial $f(G;x)$ is an expression for $f(G;x)$ in terms of the polynomials of certain subgraphs of $G$; one such formula was shown in Proposition \ref{prop_multiplicative}. In view of this, it would be interesting to investigate the following question:

\begin{question}
Are there linear recurrence relations for $\mathcal{Z}(G;x)$, or splitting formulas for $\mathcal{Z}(G;x)$ based on cut vertices or separating sets?
\end{question}
Answering these questions would be useful for computational approaches to the problem; in particular, a linear recurrence relation would allow the zero forcing polynomial of a graph to be computed recursively.

\section*{Acknowledgments}
We thank Leslie Hogben, Ken Duna, and Adam Purcilly for valuable discussions. This research was partially funded by NSF-DMS Grants 1604458, 1604773, 1604697 and 1603823, and by Simons Foundation Collaboration Grant 316262.

\bibliographystyle{abbrv}
\bibliography{zf_bib}
\end{document}